\theoremstyle{plain}
\newtheorem{Thm}{Theorem}[section]
\newtheorem{Cor}[Thm]{Corollary}
\newtheorem{Lem}[Thm]{Lemma}
\newtheorem{Prop}[Thm]{Proposition}
\newtheorem{Conj}[Thm]{Conjecture}
\newtheorem{OP}[Thm]{Open Problem}
\newtheorem*{thma}{Main Theorem}
\theoremstyle{definition}
\newtheorem{Def}[Thm]{Definition}
\theoremstyle{remark}
\newtheorem{Rem}[Thm]{Remark}
\numberwithin{equation}{subsection}
\begin{document}

\title[Holomorphic Factorization of Mappings into $ \operatorname{Sp}_{4}( \mathbb{C}) $]%
{Holomorphic Factorization of Mappings into $ \operatorname{Sp}_{4}( \mathbb{C}) $}
\author{Bj\"orn Ivarsson \and Frank Kutzschebauch \and Erik L{\o}w}
\address{Department of Mathematics of Systems Analysis\\
Aalto University\\
P.O. Box 11100, FI--00076 Aalto, Finland}
\address{Departement Mathematik\\
Universit\"at Bern\\
Sidlerstrasse 5, CH--3012 Bern, Switzerland}
\address{Department of Mathematics\\
University of Oslo\\
P.O. Box 1053, Blindern, NO--0316 Oslo, Norway}
\email{bjorn.ivarsson@aalto.fi}
\email{frank.kutzschebauch@math.unibe.ch}
\email{elow@math.uio.no}
\thanks{Part of this research was done while the authors were visitors at The Centre for Advanced 
Study (CAS) at the Norwegian Academy of Science and Letters. Bj\"orn Ivarsson was also 
supported by the Magnus Ehrnrooth Foundation and Erik L\o w by Bergens Forskningsstiftelse (BFS). The research of Frank Kutzschebauch  was partially supported by Schweizerische Nationalfonds Grant 200021-178730.}
\subjclass[2020]{Primary 32Q56; Secondary 19B14}

\date{15 May, 2020}
\setcounter{tocdepth}{3}
†\begin{abstract}
We prove that any null-homotopic holomorphic map from a Stein space $X$ to the symplectic group $\operatorname{Sp}_{4}(\mathbb{C})$ can be written as a finite product of elementary symplectic matrices with
holomorphic entries.
\end{abstract}
\maketitle
\bibliographystyle{amsalpha}
\tableofcontents
\section{Introduction}\label{introduction}

The continuous or holomorphic parameter dependence of classical linear algebra results over the fields $\mathbb{R}$ or $\mathbb{C}$ form a circle  of very natural  questions of general mathematical interest. For example the factorization of continuous  matrices as a product of  continuous elementary matrices has been studied and solved  by Vaserstein \cite{Vaserstein}. The corresponding holomorphic problem for the special linear group $\operatorname{SL}_n$ has been posed by Gromov \cite{Gromov:1989} and finally be solved by  the first two authors in \cite{Ivarsson:2012}. The study of algebraic dependence is connected with famous work by Suslin \cite{Suslin}, Cohn \cite{Cohn}, Bass, Milnor, Serre \cite{BMS} and many others.

These parameter dependence questions  are  a part of algebraic $K$-theory and the study of linear algebra over general rings.  Factorization of Chevalley groups over $\mathbb{R}$ and $ \mathbb{C} $ into elementary matrices is classically well known. For Chevalley groups over general rings this is much more difficult and studied a lot. For an overview see for example the paper by Vavilov and Stepanov \cite{VS}.

 Our main interest are the rings of holomorphic functions on Stein spaces. The only known holomorphic result is the existence for the special linear groups in \cite{Ivarsson:2012}, where Gromov's problem is solved in full generality. In the special case of an open Riemann surface the problem was solved earlier (absolutely unnoticed) by Klein and Ramspott in \cite{Klein:1988}. As well the authors proved the main result 
 of this paper for any size of symplectic matrices in the special case of of an open Riemann surface in \cite{Ivarsson:2019}.

 In the present paper we consider the symplectic groups over rings of holomorphic function on Stein spaces. The main result is (see Section \ref{s:continuous} for notation)

\begin{thma}[also Theorem \ref{t:mainthmrestate}]\label{t:mainthm}
Let $X$ be a finite dimensional reduced Stein space and $f\colon X\to \operatorname{Sp}_{4}(\mathbb{C})$ be a holomorphic mapping that is null-homotopic. Then there exist a natural number $K$ and holomorphic mappings \[G_1,\dots, G_{K}\colon X\to \mathbb{C}^{3}\] such that \[f(x)=M_{1}(G_1(x))\dots M_{K}(G_{K}(x)).\]
\end{thma}
We remind the reader that a mapping is null-homotopic if it is homotopic to a constant map. By Grauert's Oka principle
it is equivalent for a holomorphic map from a Stein space into a complex Lie group to be null-homotopic via holomorphic maps or via continuous maps.

 Our main tool is  the Oka principle for stratified elliptic submersions, the most elaborate result in modern Oka theory. In order to apply an Oka principle one needs a topological solution which we take from our previous work on symplectic groups over rings of continuous functions on topological spaces. The Oka principle lets us homotope the topological solution to a holomorphic one. The technical details needed to prove that certain fibrations are stratified elliptic are considerable and we have so far only been able to complete these details for $\operatorname{Sp}_4$. We expect that a similar result holds for $\operatorname{Sp}_{2n}$.
 
 Factorization of symplectic groups over other rings (of mainly algebraic nature)  has been considered before for example by Kopeiko \cite{Kopeiko}, Grunewald, Mennicke and Vaserstein in \cite{Grunewald:1991}. We have especially included a section, Section \ref{K-theory}, where we explain how our results can be formulated in algebraic/K-theoretic terms.

The paper is organized as follows. In Section \ref{s:continuous} we recall our results on factorization of continuous matrices and prove a slight extension
about the number of factors. In Section \ref{overview} we state our main results and give an overview over the proof. In Section \ref{K-theory} we explain
how our results can be reformulated in the language used in algebraic $K$-theory. In Section \ref{sprays} we recall the theorems from Oka theory which we use in our proof.

In Section \ref{technical1} we give the proofs of Lemmata \ref{l:surjective} and \ref{l:submersive} where we prove that
the most important  fibrations in this paper, the projections of products of elementary symplectic matrices onto their last row, are surjective and we
determine where they are submersive. This is done for symplectic matrices of all sizes, since we hope to be able in the future to
prove that these fibrations are stratified elliptic for all sizes.

The rest of the paper is devoted to prove that our
fibration (for $(4\times4)$-matrices) is stratified elliptic in order to 
be able to apply Oka theory.
In Section \ref{s:stratification} we describe the 
stratification with respect to which we want to prove that the important fibration is stratified
elliptic. This has to do with how the set of $2n$ algebraic equations defining a fiber in the fibration can be reduced to $n$ equations. 
In the case of the Special Linear Group in \cite{Ivarsson:2012} we were able to reduce to one 
single equation independent of the size of the matrices, which was the crucial trick to prove 
ellipticity by using Gromov's example of a spray, complete vector fields. This inability to reduce to less equations is the main difference between  present situation of the Symplectic Group and the Special Linear Group. It  causes all the  hard technical work which fills the rest of the paper. In the next Section \ref{s:descComplete} we introduce our method to find complete vector fields tangent to the fibration. However not all of them are complete and we deduce that  the Gromov-spray produced by them is not dominating.  We determine which of them are complete. In Section \ref{s:strategy}
we explain our strategy to enlarge the set of complete vector fields so that this enlarged 
collection now spans the tangent space at all points and thus gives a fibre dominating spray. The realization of this strategy takes Sections \ref{s:helpful}, where we introduce useful quantities, Sections  \ref{s:proof3factors}, \ref{s:proof4factors}, \ref{s:proof5factors}, where we prove the result for $3, 4$ and $5$  (elementary symplectic) factors, and finally we can give an inductive (over the number of factors) proof in Section \ref{s:induction}. The reason for dealing with the low numbers of factors separately is that some
of the fibers of our fibration are reducible in the cases of small numbers of factors, and from $5$ factors on all fibers are irreducible.
In the last Section \ref{s:exponential} we end the paper with an application to the problem of product of exponentials and formulate some open questions.

\section{Continuous factorization}\label{s:continuous}

Let $ \omega = \sum_{j=1}^n dz_j\wedge dz_{j+n} $ be the symplectic form in $ \mathbb{C}^{2n} $. With respect to $\omega$ symplectic matrices are those that can be written in block form as \[ \begin{pmatrix} A & B \\ C & D \end{pmatrix} \] where $ A, B, C $ and $ D $ are complex $ (n \times n) $ matrices satisfying 
\begin{equation}\label{e:firstsymp} 
A^TC=C^TA
\end{equation}
\begin{equation}\label{e:secondsymp}
B^TD=D^TB
\end{equation}
\begin{equation}\label{e:thirdsymp}
A^TD-C^TB=I_n 
\end{equation}
where  $ I_n $ is the  $ (n\times n) $ identity matrix. In the special case $B=C=0$ this means that $D=(A^T)^{-1}$ and in the special case $A=D=I_n$ this means that $B$ and $C$ are symmetric and $C^T B=0$.  Let $ U_n $ denote a $ (n\times n) $-matrix satisfying $ U_n=U_n^T $ and $ 0_n $ the  $ (n\times n) $ zero matrix.  We call those matrices that are written in block form as \[ \begin{pmatrix} I_n & 0_n \\ U_n & I_n  \end{pmatrix} \mbox{ or }  \begin{pmatrix} I_n & U_n \\ 0_n & I_n  \end{pmatrix} \] {\it elementary symplectic matrices}. Let \[ U_n(x_1,\dots, x_{n(n+1)/2}) = \begin{pmatrix} x_1 & x_2 & \dots & x_n \\ x_2 & x_{n+1} & \dots & x_{2n-1} \\ \vdots & \vdots & \ddots & \vdots \\ x_n & x_{2n-1} & \dots & x_{n(n+1)/2} \end{pmatrix}. \] Given a map $ G\colon X \to \mathbb{C}^{n(n+1)/2} $ let \[ U_n(G(x))=U_n(G_1(x),\dots, G_{n(n+1)/2}(x)) \] where the $ G_j $'s are components of the map $ G $. For odd $ k $ let \[ M_k(G(x))= \begin{pmatrix} I_n & 0_n \\ U_n(G(x)) & I_n \end{pmatrix} \] and for even $ k $ \[ M_k(G(x))= \begin{pmatrix} I_n & U_n(G(x)) \\ 0_n & I_n \end{pmatrix}. \] 

The following result is a refinement of (\cite[Theorem 1.3]{Ivarsson:2019}).

\begin{Thm}\label{t:mainthmtoprestate}
(Continuous Vaserstein problem for symplectic matrices)
There exists a natural number $K(n,d)$ such that given any  finite dimensional normal topological space $X$ of (covering) dimension $d$ and any null-homotopic continuous mapping $M\colon X\to \operatorname{Sp}_{2n}(\mathbb{C})$ there exist $K$ continuous mappings \[G_1,\dots, G_{K}\colon X\to \mathbb{C}^{n(n+1)/2}\] such that \[M(x)=M_{1}(G_1(x))\dots M_{K}(G_{K}(x)).\]
\end{Thm}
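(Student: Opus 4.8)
The existence of \emph{some} finite factorization of a null-homotopic map is already \cite[Theorem~1.3]{Ivarsson:2019}; the new point of Theorem~\ref{t:mainthmtoprestate} is that the number of factors can be bounded by a function of $n$ and $d=\dim X$ only. One natural way to obtain such a uniform bound is to phrase the factorization as a lifting problem, to solve it after a general-position perturbation, and to absorb the perturbation by a single universal local factorization.

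Set $N=n(n+1)/2$ and let $\Phi_L\colon(\mathbb{C}^{N})^{L}\to\operatorname{Sp}_{2n}(\mathbb{C})$ be the holomorphic map $\Phi_L(G_1,\dots,G_L)=M_1(G_1)\cdots M_L(G_L)$; writing $M$ as a product of $L$ elementary symplectic matrices with continuous entries is exactly the same as lifting $M$ continuously through $\Phi_L$. The first step is to show, using the analysis of where products of elementary symplectic matrices are submersive (the arguments underlying Lemmata~\ref{l:surjective} and~\ref{l:submersive}, carried out for the whole matrix rather than only its last row), that there is a number $L=L(n,d)$ and a closed algebraic subvariety $S\subsetneq\operatorname{Sp}_{2n}(\mathbb{C})$ of complex codimension $>d$ such that over $\operatorname{Sp}_{2n}(\mathbb{C})\setminus S$ the map $\Phi_L$ is a surjective submersion with connected fibres, and hence (a point needing care, see below) a Serre fibration there.

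Now take a null-homotopic $M\colon X\to\operatorname{Sp}_{2n}(\mathbb{C})$. Since $S$ has real codimension more than $2d$, general position lets us $C^{0}$-approximate $M$ by a homotopic map $\widetilde M\colon X\to\operatorname{Sp}_{2n}(\mathbb{C})\setminus S$ which, by the codimension bound, is again null-homotopic inside $\operatorname{Sp}_{2n}(\mathbb{C})\setminus S$; because $\Phi_L$ is surjective and restricts to a fibration over $\operatorname{Sp}_{2n}(\mathbb{C})\setminus S$, the null-homotopic $\widetilde M$ lifts, giving $\widetilde M=M_1(\widetilde G_1)\cdots M_L(\widetilde G_L)$. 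Finally $M=\widetilde M\cdot H$ with $H:=\widetilde M^{-1}M$, and as the approximation is $C^{0}$-close, $H$ maps $X$ into a fixed small contractible (Stein) neighbourhood $V$ of $I$ in $\operatorname{Sp}_{2n}(\mathbb{C})$. Applying \cite[Theorem~1.3]{Ivarsson:2019} once, to the null-homotopic inclusion $V\hookrightarrow\operatorname{Sp}_{2n}(\mathbb{C})$, produces a number $m_0=m_0(n)$ and continuous maps $h_1,\dots,h_{m_0}\colon V\to\mathbb{C}^{N}$ with $\mathrm{incl}_V=M_1(h_1)\cdots M_{m_0}(h_{m_0})$, whence $H=M_1(h_1\circ H)\cdots M_{m_0}(h_{m_0}\circ H)$. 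Concatenation exhibits $M$ as a product of $K:=L(n,d)+m_0(n)$ elementary symplectic matrices with continuous entries.

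The hard part is the geometric input of the second paragraph: that by allowing enough factors the non-submersivity locus of $\Phi_L$ can be pushed into a subvariety of complex codimension exceeding $d$, with connected fibres over the complement. It is this estimate that forces $K$ to depend on $d$, and it rests on a genuine analysis of the multiplication maps of elementary symplectic matrices (the same difficulty, in a much more acute holomorphic form, is what occupies the bulk of the paper and is settled only for $\operatorname{Sp}_4$). A secondary subtlety is that a surjective submersion of algebraic varieties need not be a locally trivial bundle, so the fibration property of $\Phi_L$ over $\operatorname{Sp}_{2n}(\mathbb{C})\setminus S$ must be justified (for instance from explicit complete vector fields trivializing it locally, or else replaced by the weaker assertion, extracted from connectivity of the fibres by obstruction theory, that null-homotopic maps of covering dimension $\le d$ lift). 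Alternatively one can dispense with the lifting entirely and argue by induction on a $(d+1)$-colouring of a suitably fine open cover of $X$, localizing elementary factors by cut-off functions and invoking the universal local factorization above at each of the $d+1$ stages, which yields a bound of the form $(d+1)\,m_0(n)$; the delicate point there is to ensure that the partially corrected map still sends each colour-component into a single contractible set.
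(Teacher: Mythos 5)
Your proposal and the paper's proof are completely different. The paper's proof is a short nonconstructive compactness argument: assume no uniform bound $K(n,d)$ exists, so for each $i$ there is a normal space $X_i$ of covering dimension $d$ and a null-homotopic $f_i\colon X_i\to\operatorname{Sp}_{2n}(\mathbb{C})$ requiring at least $i$ factors; form the disjoint union $X=\bigsqcup_i X_i$ (still normal, still of covering dimension $d$) and the combined map $F\colon X\to\operatorname{Sp}_{2n}(\mathbb{C})$, which is null-homotopic; apply the non-uniform Theorem~1.3 of \cite{Ivarsson:2019} once to $F$ to obtain a finite factorization, whose number of factors then serves for every $f_i$ simultaneously, a contradiction. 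No geometry of $\Psi_L$ enters at all.

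Your argument, by contrast, tries to produce the bound constructively, and it has a genuine gap that you partly acknowledge. The key geometric input --- that for $L=L(n,d)$ large the critical values of $\Psi_L$ can be confined to a subvariety $S\subset\operatorname{Sp}_{2n}(\mathbb{C})$ of complex codimension $>d$, over whose complement $\Psi_L$ is a fibration through which null-homotopic maps from a $d$-dimensional base lift --- is nowhere established and does not follow from anything in the paper. Lemmata~\ref{l:surjective} and~\ref{l:submersive} concern the last-row map $\pi_{2n}\circ\Psi_K$, not $\Psi_K$ itself, and the singular set $S_K$ they identify lies in the \emph{source} $(\mathbb{C}^{n(n+1)/2})^K$, not as a set of critical \emph{values} in $\operatorname{Sp}_{2n}(\mathbb{C})$; they give no codimension estimate of the type you need. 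Even granting such an $S$, lifting a null-homotopic map of covering dimension $d$ through a submersion with merely connected fibres requires either a Serre-fibration structure (unclear for a non-proper algebraic submersion) or an obstruction-theoretic argument using higher connectivity of the fibres, neither of which you supply; the alternative ``$(d+1)$-colouring'' sketch likewise leaves its crucial step (that each partially corrected map sends each colour-component into a single contractible set) unverified. In short, the heavy lifting your proof defers is precisely what would make it a theorem; the paper's disjoint-union trick sidesteps all of it in one paragraph.
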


\begin{proof}
Theorem 1.3 in \cite{Ivarsson:2019} does not give a uniform bound on the number of factors depending on $n$ and $d$.
Suppose such a bound  would not exist, i.e., for all natural numbers $i$ there are normal topological  spaces $X_i$ of dimension $d$ and null-homotopic continuous  maps $f_i \colon X_i\to \operatorname{Sp}_{2n}(\mathbb{C})$ such that $f_i$ does not factor over a product of less than $i$ elementary symplectic  matrices. Set $X = \cup_{i=1}^\infty X_i$ the disjoint union of the spaces $X_i$ and
$F\colon X\to \operatorname{Sp}_{2n}(\mathbb{C})$ the map that is equal to $f_i$ on $X_i$. By Theorem 1.3. in \cite{Ivarsson:2019}
$F$ factors over a finite number of elementary symplectic matrices. Consequently all $f_i$ factor over the same number of elementary symplectic matrices which contradicts the assumption on $f_i$.

\end{proof}

\section{Statement of the main result and overview of proof}
\label{overview}

We state the main result of this paper which is a holomorphic version of Theorem \ref{t:mainthmtoprestate} for $\operatorname{Sp}_4(\mathbb{C})$.

\begin{Thm}\label{t:mainthmrestate}
There exists a natural number $N(d)$ such that given any finite dimensional reduced Stein space $X$ of dimension $d$ and any null-homotopic holomorphic mapping  $f\colon X\to \operatorname{Sp}_{4}(\mathbb{C})$  there exist  $N$ holomorphic mappings \[G_1,\dots, G_{N}\colon X\to \mathbb{C}^{3}\] such that \[f(x)=M_{1}(G_1(x))\dots M_{N}(G_{N}(x)).\]
\end{Thm}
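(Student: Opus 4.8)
The plan is to follow the classical Oka-principle strategy for factorization problems, exactly the scheme that worked for $\operatorname{SL}_n$ in \cite{Ivarsson:2012}, but now with the considerably harder ellipticity verification that the introduction advertises. First I would reduce the problem to a sequence of liftings through the fibrations obtained by taking a product of elementary symplectic matrices and projecting onto the last row (or more precisely, onto the appropriate ``incomplete'' data that determines the matrix up to a product of fewer factors). Concretely, writing $\pi_K\colon (\mathbb{C}^{3})^{K}\to \operatorname{Sp}_4(\mathbb{C})$ for the map $(G_1,\dots,G_K)\mapsto M_1(G_1)\cdots M_K(G_K)$, the theorem asserts that a null-homotopic holomorphic $f$ lifts through $\pi_K$ for $K=N(d)$ large. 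The topological lift exists by Theorem \ref{t:mainthmtoprestate} with $K=K(2,d)$, so the task is to promote a continuous section of the pulled-back fibration $f^{*}\pi_K$ to a holomorphic one.

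Next I would set up the induction on the number of factors that the introduction describes: rather than attacking $\pi_K$ in one go, peel off one elementary factor at a time. At each stage one has a fibration whose fibre is (essentially) the solution set of the symplectic equations \eqref{e:firstsymp}--\eqref{e:thirdsymp} for a product of $j$ elementary matrices with prescribed last row; Lemmata \ref{l:surjective} and \ref{l:submersive} (quoted from Section \ref{technical1}) tell us these maps are surjective and identify the locus where they fail to be submersions, which dictates the stratification of Section \ref{s:stratification}. The heart of the argument is then to show each of these maps is a \emph{stratified elliptic submersion} in the sense of Forstneri\v{c}--Gromov, so that the parametric Oka principle for stratified elliptic submersions (recalled in Section \ref{sprays}) applies and yields: the continuous section is homotopic to a holomorphic section. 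Chaining these lifts over $j=1,\dots,K$ and absorbing the bound $K(2,d)$ into $N(d)$ finishes the proof; the residual homotopy-theoretic bookkeeping (that the intermediate sections remain null-homotopic in the relevant sense) is routine via Grauert's Oka principle for maps into complex Lie groups, as noted after the Main Theorem.

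The main obstacle — and where essentially all the work of the paper lives — is verifying stratified ellipticity, i.e.\ constructing a fibre-dominating spray on each stratum. Following Gromov's prescription one looks for complete holomorphic vector fields tangent to the fibres; these come from flows that add to one of the elementary blocks a symmetric matrix depending on the neighbouring blocks. The difficulty, flagged in the introduction, is that unlike the $\operatorname{SL}_n$ case the $2n$ symplectic equations cannot be collapsed to a single equation, so the naive family of such vector fields is \emph{not} complete and the associated Gromov spray fails to be dominating. My plan would therefore mirror Section \ref{s:strategy}: first pin down exactly which of the candidate vector fields are complete (Section \ref{s:descComplete}), then enlarge the collection — using the auxiliary quantities of Section \ref{s:helpful} and a case analysis on the number of factors — until at every point of every stratum the complete fields span the tangent space to the fibre. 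The small-factor cases ($3,4,5$ factors) must be handled by hand because some fibres are reducible there; once all fibres are irreducible (from $5$ factors on) an induction on the number of factors closes the argument. This spray construction, not the Oka-theoretic or $K$-theoretic scaffolding, is the real content and the step I expect to be hardest.
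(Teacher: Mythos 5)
You have the right Oka-theoretic scaffold in broad outline (topological factorization from Theorem \ref{t:mainthmtoprestate}, upgrade a continuous section to a holomorphic one via a stratified elliptic submersion, the real work being the spray construction), but your proposal diverges from the paper's proof at several structurally essential points, and the divergences would leave genuine gaps.

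First, you propose to lift directly through the full product map $\Psi_K\colon(\mathbb{C}^3)^K\to\operatorname{Sp}_4(\mathbb{C})$. The paper explicitly rejects this — the fibres of $\Psi_K$ are ``quite difficult to analyze'' — and instead works with the composed map $\Phi_K=\pi_4\circ\Psi_K\colon(\mathbb{C}^3)^K\setminus S_K\to\mathbb{C}^4\setminus\{0\}$ that records only the \emph{last row} of the product. Everything downstream (the definitions of $S_K$, Lemma \ref{l:surjective}, Lemma \ref{l:submersive}, the stratification of Section \ref{s:stratification}, Proposition \ref{p:mainprop}) is built for this last-row fibration, not for $\Psi_K$ itself. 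Your proposal later describes fibres as ``a product of $j$ elementary matrices with prescribed last row,'' so you seem to sense the right object, but you never make the switch explicit, and without it the ellipticity analysis in Sections \ref{s:descComplete}--\ref{s:induction} does not connect to your framework.

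Second — and this is the biggest omission — because the paper only matches last rows, the Oka step does \emph{not} finish the proof. After obtaining the holomorphic $G$ with $\pi_4\circ\Psi_K\circ G=\pi_4\circ f$, the matrix $\Psi_K(G(x))f(x)^{-1}$ is not the identity: it has last row $(0,0,0,1)$, and by the symplectic relations also second column $(0,1,0,0)^T$, so a $2\times2$ block $\widetilde f\in\operatorname{SL}_2$ survives. The paper then (i) observes $\widetilde f$ is null-homotopic because $\widetilde f_0=\operatorname{Id}$ along the Oka homotopy, (ii) invokes the $\operatorname{SL}_n$ factorization theorem of \cite{Ivarsson:2012} as a black box to write $\psi(\widetilde f^{-1})$ as a product of holomorphic elementary symplectic matrices, and (iii) shows the residual unipotent matrix $\Psi_K(G)f^{-1}\cdot\psi(\widetilde f^{-1})$ factors \emph{explicitly} via the Whitehead-type identity \eqref{e:whitehead}. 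None of steps (i)--(iii) appear in your proposal; they are essential and non-trivial.

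Third, you omit the small but necessary padding trick: the continuous factorization from Theorem \ref{t:mainthmtoprestate} need not land in $(\mathbb{C}^3)^K\setminus S_K$, so the paper replaces $f=M_1(G_1)\cdots M_{K-2}(G_{K-2})$ by $f=M_1(H)M_2(0)M_3(G_1-H)M_4(G_2)\cdots M_K(G_{K-2})$ for a fixed symmetric $H$ with non-zero second row, which forces the section off $S_K$ before Corollary \ref{c:factorization} can be invoked.

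Finally, your description of ``peeling off one elementary factor at a time'' and ``chaining these lifts over $j=1,\dots,K$'' suggests $K$ separate Oka applications. That is not what the paper does — Corollary \ref{c:factorization} is applied exactly once, to the full product. The factor-by-factor induction in the paper lives entirely inside the \emph{proof of stratified ellipticity} (Sections \ref{s:proof3factors}--\ref{s:induction}), not in the application of the Oka principle; conflating the two obscures where the ellipticity verification has to be uniform. Moreover a factor-by-factor lifting scheme would require a separate argument that each intermediate residual map remains null-homotopic and avoids the relevant singular locus, and it is not clear this can be arranged without essentially redoing the paper's reduction.
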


We have the following corollary.

\begin{Cor}
Let $X$ be a finite dimensional reduced Stein space that is topologically contractible and $f\colon X\to\operatorname{Sp}_{4}(\mathbb{C})$ be a holomorphic mapping. Then there exist a natural number $N$ and holomorphic mappings \[G_1,\dots, G_{N}\colon X\to \mathbb{C}^{3}\] such that \[f(x)=M_{1}(G_1(x))\dots M_{N}(G_{N}(x)).\]
\end{Cor}



The strategy for proving Theorem \ref{t:mainthmrestate} is as follows. Define \[\Psi_K\colon (\mathbb{C}^{3})^K\to \mbox{Sp}_4(\mathbb{C})\] as 
\begin{equation} \label{e:product}
\Psi_K(x_1,\dots,x_{3K})=M_1(x_1,x_2,x_3)\dots M_K(x_{3K-2}, x_{3K-1}, x_{3K}).
\end{equation}  We want to show the existence of a holomorphic map \[G=(G_1,\dots, G_K)\colon X\to (\mathbb{C}^{3})^K\] such that  \[\xymatrix{ & (\mathbb{C}^{3})^K \ar[d]^{\Psi_{K}} \\ X \ar[r]_{f} \ar[ur]^{G} & \mbox{Sp}_4(\mathbb{C})}\] is commutative. Theorem \ref{t:mainthmtoprestate} shows the existence of a continuous map such that the diagram above is commutative. 

We will prove Theorem \ref{t:mainthmrestate} using the \textsc{Oka-Grauert-Gromov} principle for sections of holomorphic submersions over $X$. One candidate submersion would be to use the pull-back of $\Psi_K\colon (\mathbb{C}^{3})^K\to \mbox{Sp}_4(\mathbb{C})$. It turns out that $\Psi_K$ is not a submersion at all points in $(\mathbb{C}^{3})^K$. It is a surjective holomorphic submersion if one removes a certain subset from $(\mathbb{C}^{3})^K$. Unfortunately the fibers of this submersion are quite difficult to analyze and we therefore elect to study  \[\xymatrix{ & (\mathbb{C}^{3})^K \ar[d]^{\pi_4\circ \Psi_{K}} \\ X \ar[r]_{\pi_4\circ f} \ar[ur]^{F} & \mathbb{C}^4\setminus \{ 0 \}}\] where we define the projection $\pi_4\colon \mbox{Sp}_4(\mathbb{C})\to \mathbb{C}^4\setminus \{ 0 \}$ to be the
projection of a matrix to its last row:
\begin{equation*}
\pi_4 \begin{pmatrix} z_{11} & \dots & z_{14} \\ \vdots & \ddots & \vdots \\ z_{41} & \dots & z_{44} \end{pmatrix} =(z_{41},\dots , z_{44}).
\end{equation*} 

However, even the map $\Phi_K=\pi_4\circ \Psi_K\colon (\mathbb{C}^{3})^K\to \mathbb{C}^4\setminus \{0\}$ is not submersive everywhere. We have the three results below (Lemma \ref{l:surjective}, Lemma \ref{l:submersive} and Proposition \ref{p:mainprop})  about that map which will be proved in later sections. 

We introduce some notation. Projecting to the last row introduces an asymmetry between upper and lower triangulary elementary matrices and therefore we will denote by $ z $'s the variables in the lower triangular matrices and $ w $'s the variables in the upper triangular matrices. For example, the right hand side of (\ref{e:product}) becomes \[ \begin{pmatrix} 1 & 0 & 0 & 0 \\ 0 & 1 & 0 & 0 \\ z_1 & z_2 & 1 & 0 \\ z_2 & z_3 & 0 & 1 \end{pmatrix}\begin{pmatrix} 1 & 0 & w_1 & w_2\\ 0 & 1 & w_2 & w_3 \\ 0 & 0 & 1 & 0 \\ 0 & 0 & 0 & 1 \end{pmatrix}\cdots \begin{pmatrix} 1 & 0 & w_{3k-2} & w_{3k-1}\\ 0 & 1 & w_{3k-1} & w_{3k} \\ 0 & 0 & 1 & 0 \\ 0 & 0 & 0 & 1 \end{pmatrix} \] for even $ K=2k $.

 Let \[ \vec{Z}_K=\begin{cases} (z_1, z_2, z_3, w_1, w_2,w_3,\dots, w_{3k-2},w_{3k-1},w_{3k}) \text{ if } K=2k \\ (z_1, z_2, z_3, w_1, w_2,w_3,\dots, z_{3k+1},z_{3k+2},z_{3k+3}) \text{ if } K=2k+1\end{cases} \] and 
\[ W_K = \begin{cases}  \begin{pmatrix} w_1 & w_2 & w_4 & w_5 & \dots & w_{3k-5} & w_{3k-4} \\ w_2 & w_3 & w_5 & w_6 & \dots & w_{3k-4} & w_{3k-3} \end{pmatrix} \text{ if } K=2k\\ \\ \begin{pmatrix} w_1 & w_2 & w_4 & w_5 & \dots & w_{3k-2} & w_{3k-1} \\ w_2 & w_3 & w_5 & w_6 & \dots & w_{3k-1} & w_{3k} \end{pmatrix}\text{ if } K = 2k+1. \end{cases}  \] 
Also, when $ K=2k $ or $ K=2k+1 $, let \begin{equation*} 
A_K = \bigcap_{1\le j \le k}\left \{\vec{Z}_K\in (\mathbb{C}^{3})^K: z_{3j-1}=z_{3j}=0 \right \}, 
\end{equation*}
\begin{equation*}
B_K = \left \{\vec{Z}_K\in (\mathbb{C}^{3})^K: \operatorname{Rank}W_K < 2 \right \}  
\end{equation*}
and 
\begin{equation}\label{e:submersive} 
S_K = A_K \cap B_K. 
\end{equation}

We have Lemma \ref{l:surjective} that follows from a simple calculation. 

\begin{Lem}\label{l:surjective}
The mapping $\Phi_K=\pi_4\circ \Psi_K\colon (\mathbb{C}^{3})^K\setminus S_K\to \mathbb{C}^4\setminus \{0\}$ is surjective when $K\ge 3$.
\end{Lem}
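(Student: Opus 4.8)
The plan is to show surjectivity by an explicit, hands-on construction: given a target vector $(a_1,a_2,a_3,a_4)\in\mathbb{C}^4\setminus\{0\}$, exhibit a point $\vec{Z}_K\in(\mathbb{C}^{3})^K\setminus S_K$ with $\Phi_K(\vec{Z}_K)=(a_1,a_2,a_3,a_4)$. Since $\Phi_K$ is the last row of a product of elementary symplectic matrices, the last row of the product depends on the whole product, but the key observation is that multiplying on the \emph{right} by an additional elementary matrix transforms the last row in a controlled (affine-linear) way. So the strategy is to pick the first few factors to be generic enough that the point lies outside $S_K$ and that the last row at that stage has a convenient form, and then use the remaining factors to steer the last row to the prescribed target.

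\medskip

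Concretely, I would first handle $K=3$ by direct computation: write $\Phi_3(z_1,z_2,z_3,w_1,w_2,w_3,z_4,z_5,z_6)$ as the last row of
\[
M_1(z_1,z_2,z_3)\,M_2(w_1,w_2,w_3)\,M_3(z_4,z_5,z_6),
\]
which one computes to be an explicit polynomial map; the last row of $M_1M_2$ is $(z_1+z_2 w_2,\, z_2+z_2 w_3,\,\ast,\ast)$ in the first two slots (more precisely one computes the full last row of $M_1 M_2$), and right-multiplication by $M_3$ adds to the last row a vector lying in the row span of the bottom two rows' ``$U_2$-part''. One checks that as long as we are allowed to choose $(z_2,z_3)\neq(0,0)$ (which is exactly avoiding one of the defining conditions of $A_3$, hence of $S_3$) we can already reach every nonzero vector, and a separate easy sub-argument covers targets that force $z_2=z_3=0$ by instead using the freedom in $W_3$ (i.e.\ staying off $B_3$). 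This case analysis — reaching an arbitrary nonzero target while simultaneously certifying that the chosen preimage avoids $S_K$ — is the only place real work is needed.

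\medskip

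For general $K\ge 3$ I would reduce to the case $K=3$: given $K$, set the ``middle'' factors $M_4,\dots,M_K$ (or $M_3,\dots,M_{K-1}$, according to parity) to have generic parameters making the partial product land outside the relevant vanishing loci, and observe that the last-row map for $K$ factors, restricted to varying only three consecutive factors, has the same shape as the $K=3$ map up to an invertible symplectic change of coordinates acting on the right; since $\operatorname{Sp}_4(\mathbb{C})$ acts on $\mathbb{C}^4\setminus\{0\}$ transitively through $\pi_4$ precomposed with right translation, surjectivity for $K=3$ propagates. One must be slightly careful that the generic choice of the extra factors can be made to keep the full point off $S_K$; because $S_K$ is a proper algebraic subset defined by the vanishing of the $z$'s in \emph{every} odd block together with a rank-drop of $W_K$, it suffices to make a single odd block's last two $z$-coordinates nonzero, which is trivially compatible with everything else.

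\medskip

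The main obstacle I anticipate is purely bookkeeping rather than conceptual: keeping track of the exact polynomial expression for the last row of a product of elementary symplectic matrices, and organizing the case split ``generic target'' versus ``target reachable only with $z_2=z_3=0$'' so that in each case the constructed preimage is manifestly in $(\mathbb{C}^{3})^K\setminus S_K$. I expect the authors' proof to be exactly such a ``simple calculation'' as advertised, most likely realizing the target by taking all but one or two parameters equal to zero and solving a small explicit linear system for the rest, with the set $S_K$ being precisely the locus where that explicit solution degenerates.
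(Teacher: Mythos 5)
Your approach is essentially the paper's: build an explicit preimage with the last row $\vec{z}$ of $Z_1$ nonzero (certifying the point avoids $S_K$), steer to the target using symmetric $W_1$ with $W_1\vec{z}=\vec{b}-\vec{e}_n$ and symmetric $Z_2$ with $\vec{z}+Z_2\vec{b}=\vec{a}$, and set the remaining $K-3$ factors to zero. One small note: the sub-argument you anticipate for targets ``forcing'' $z_2=z_3=0$ is never needed, since for every nonzero target $(\vec{a},\vec{b})$ one can choose $Z_2$ so that $\vec{z}=\vec{a}-Z_2\vec{b}\neq 0$, so the preimage always lies off $A_K$ and the rank condition on $W_K$ never comes into play.
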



\begin{Lem}\label{l:submersive}
For $K\ge 3$ the mapping $\Phi_K=\pi_4\circ \Psi_K\colon (\mathbb{C}^3)^K\to \mathbb{C}^4\setminus \{0\}$ is a holomorphic submersion exactly at points $\vec{Z}_K\in (\mathbb{C}^{3})^K\setminus S_K$ where  $ S_K $ is defined by (\ref{e:submersive}) above.
That is, $ S_K $ is the set of points where the entries in the last row of each lower triangular matrix are zero, except for the $K$-th matrix where no conditions are imposed, and the rank of the matrix $ W_K $, which does not involve entries from the $ K $-th matrix, is strictly less than 2.
\end{Lem}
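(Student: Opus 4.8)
The plan is to compute the differential of $\Phi_K=\pi_4\circ\Psi_K$ directly and read off its rank from an explicit description of the last row of a product of elementary symplectic matrices. First I would set up notation for the partial products: write $P_j(x)$ for the product of the first $j$ elementary factors and $Q_j(x)$ for the product of the last $K-j$ factors, so that $\Psi_K=P_{j-1}\,M_j\,Q_j$ for each $j$. Differentiating with respect to the three variables sitting in the $j$-th factor $M_j$, and then applying $\pi_4$, one sees that the image of $d\Phi_K$ is spanned (over all $j$ and all three coordinate directions in each factor) by the vectors obtained as: (last row of $P_{j-1}$) times (derivative of $M_j$ in one of its three entries) times $Q_j$. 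Since the last row of $P_{j-1}$ and the structure of $dM_j$ are completely explicit, each such vector is a concrete expression in the $z$'s and $w$'s, and the claim reduces to: these vectors span all of $\mathbb{C}^4$ precisely off $S_K$.

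Next I would organize the computation by exploiting the triangular block structure. For even $K=2k$ the last factor is upper triangular $\begin{pmatrix} I & W\\ 0 & I\end{pmatrix}$, whose last two rows are $(0,0,1,0)$ and $(0,0,0,1)$; the last row of $\Psi_K$ therefore only ``sees'' this factor trivially, which is exactly why the conditions defining $S_K$ never involve the $K$-th factor. The derivatives in the entries of the final factor contribute directions that, together with the last row of $P_{K-1}$, already give a large piece of the span; what remains is to show that the derivatives in the lower-triangular factors' last-row entries $z_{3j-1},z_{3j}$ and the rank-$2$ condition on $W_K$ control the two remaining directions. I expect the cleanest route is an induction on $K$ using the partial-product decomposition: peel off the first one or two factors, relate $S_K$ to $S_{K-2}$ (or $S_{K-1}$), and check the base cases $K=3$ (and possibly $K=4$) by hand.

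The two inclusions should be handled separately. For ``submersive off $S_K$'': assuming $\vec Z_K\notin S_K$, either some lower-triangular factor among the first $K-1$ has a nonzero last-row pair $(z_{3j-1},z_{3j})$, or $\operatorname{Rank}W_K=2$; in the first case the corresponding derivative vectors, combined with those from the final factor, are shown to span $\mathbb{C}^4$ by an explicit $4\times 4$ minor being nonzero, and in the second case the rank-$2$ hypothesis on $W_K$ directly produces two independent directions among the $w$-derivatives. For ``not submersive on $S_K$'': when $\vec Z_K\in S_K$ one shows all the spanning vectors lie in a common hyperplane (or lower-dimensional subspace) — concretely, when all $z_{3j-1}=z_{3j}=0$ for $j\le k$ the matrices $P_{j-1}$ acquire a persistent block-triangular shape forcing a zero in a fixed slot of every contribution, and the failure of $W_K$ to have rank $2$ collapses the remaining $w$-contributions into a single direction.

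The main obstacle will be the bookkeeping in the ``not submersive on $S_K$'' direction: one must verify that \emph{simultaneously} imposing both families of conditions (vanishing last rows of the inner lower-triangular factors \emph{and} $\operatorname{Rank}W_K<2$) really does drop the rank of $d\Phi_K$, i.e.\ that these are not merely sufficient but necessary. This requires tracking how a zero last row in an early lower-triangular factor propagates through all subsequent partial products — a somewhat delicate but purely mechanical matrix computation — and then checking that the only surviving degrees of freedom are exactly the rank of $W_K$. I would isolate this as the core lemma, prove it by the induction sketched above, and treat $K=3,4$ as explicit base cases since (as the introduction notes) the geometry of these fibers is slightly degenerate there.
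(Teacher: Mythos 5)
Your overall plan — compute the Jacobian of $\Phi_K$, organize the columns by which factor $M_j$ they come from, and decide spanning by an induction on $K$ — is the same broad strategy as the paper's, but there are two concrete problems with the details you propose.

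First, the direction of the induction. You propose to peel off the \emph{first} one or two factors and relate $S_K$ to $S_{K-2}$ for the truncated product. This is the reverse of what the paper does, and it is not a cosmetic difference: $S_K$ imposes no condition on the $K$-th factor, so the clean recursion comes from peeling off the \emph{last} factor. Writing $P^K$ for the transpose of the last row, one gets $P^{K+1}=M_{K+1}^T\,P^K$, hence $JP^{K+1}=M_{K+1}^T\,JP^K\cup(\text{new columns})$ with $M_{K+1}^T$ invertible (so the rank of the old part is preserved) and with the new columns completely explicit in the entries of $P^K$. If instead you remove the first factor(s), the set $S_K$ does \emph{not} restrict to $S_{K-2}$ for the shifted product (the rank condition on $W_K$ still mixes the columns of the removed $W_1$ with the remaining $W_j$'s), and every Jacobian contribution from a later factor keeps the variable last row of $M_1$ as a left multiplier, so the recursion entangles rather than decouples.

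Second, and more seriously, your plan is too vague at exactly the place where the paper has to work. The paper proves, as part of the inductive statement, that when all last-row $z$-variables of the first $K-1$ factors vanish one has $P^{2k}=e_{2n}$ \emph{and} the column span of the bottom half of $JP^{2k}$ is precisely the column span of $W_1,\dots,W_k$. That second statement is the invariant that lets one translate ``$\operatorname{Rank}W_K<2$'' into ``$JP^K$ drops rank''. Your description (``the failure of $W_K$ to have rank $2$ collapses the remaining $w$-contributions into a single direction'') names the desired conclusion but not the lemma that makes it provable; without carrying this stronger spanning statement through the induction, the ``not submersive on $S_K$'' direction is not established. Finally, a small misattribution: the small-$K$ degeneracies you invoke concern the irreducibility of the fibers (Proposition 3.6, the ellipticity argument), not this submersion lemma; the paper's proof here is uniform in $K$, with the auxiliary claim checked at $N=1,2$, and $K=3,4$ play no special role.
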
 

\begin{Rem} \label{r:generalizeSub}
Lemma \ref{l:surjective} and Lemma \ref{l:submersive} both generalize to $ (2n\times 2n)$-matrices and the proofs are identical. In Section \ref{technical1} we therefore
consider the general case.
\end{Rem}

\begin{Prop} \label{p:mainprop}
For $n=1$ and $n=2$ the map  \begin{equation}\label{e:ellipticsubmersion}\xymatrix{ (\mathbb{C}^{n(n+1)/2})^K \setminus
    S_K\ar[d]^{\pi_{2n}\circ \Psi_{K}} \\  \mathbb{C}^{2n}\setminus \{ 0
    \}}\end{equation} is a stratified elliptic submersion. 
\end{Prop}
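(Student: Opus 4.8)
The plan is to verify the definition of a stratified elliptic submersion directly: exhibit a finite stratification of the target (or total space) into locally closed strata over which the restricted submersion is elliptic, meaning it admits a dominating (fiber-)spray. The case $n=1$ is essentially the $\operatorname{SL}_2$ situation already treated in \cite{Ivarsson:2012}, so the substance is $n=2$; I would dispose of $n=1$ by a one-paragraph reduction to loc.\ cit.\ (or by the same method used below, which is simpler there). For $n=2$, Lemma~\ref{l:submersive} tells us exactly where $\Phi_K=\pi_4\circ\Psi_K$ fails to be a submersion, namely on $S_K=A_K\cap B_K$; since we have removed $S_K$ from the total space, $\Phi_K$ is a genuine surjective holomorphic submersion onto $\mathbb{C}^4\setminus\{0\}$ by Lemmas~\ref{l:surjective} and~\ref{l:submersive}. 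What remains is to produce enough complete vector fields tangent to the fibers to span the vertical tangent bundle at every point, since the flows of finitely many complete fiber-tangent vector fields assemble (by Gromov's composed-spray construction) into a dominating spray.

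The key steps, in order: (1) Fix the stratification of $\mathbb{C}^4\setminus\{0\}$ — here I expect the relevant stratum to reflect whether the last-row target data $(z_{41},\dots,z_{44})$ has its first two coordinates vanishing or not, mirroring the $A_K$/$B_K$ split, so that over the open dense stratum the fibers are "generic" and over the lower-dimensional stratum one needs the auxiliary analysis; the paper announces (Section~\ref{s:stratification}) that this stratification encodes how the $2n=4$ defining equations of a fiber reduce to $n=2$ equations. (2) Over each stratum, write down explicit vector fields tangent to the fibration coming from the group action — conjugates and shears built from the elementary symplectic one-parameter subgroups acting on the $\vec{Z}_K$ coordinates — and check completeness; as the introduction warns, not all of the naive such fields are complete, so this is where one selects the complete ones and records precisely which they are (the content flagged for Sections~\ref{s:descComplete}–\ref{s:helpful}). (3) Show the selected complete fields span: this is the crux, and because the spanning fails for small $K$ on the reducible fibers, one argues it separately for $K=3,4,5$ (Sections~\ref{s:proof3factors}–\ref{s:proof5factors}) and then inducts on $K\ge 5$ where all fibers are irreducible (Section~\ref{s:induction}). (4) Assemble: finitely many complete fiber-tangent vector fields whose flows span the vertical bundle give, via the exponential/composition of their flows, a fiber-dominating spray over each stratum, which is exactly stratified ellipticity.

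The main obstacle is unquestionably step (3), the spanning/domination statement. The honest difficulty, as the authors themselves emphasize, is that unlike the $\operatorname{SL}_n$ case — where the fiber is cut out by a single equation and Gromov's complete-vector-field example applies verbatim — here the fiber of $\Phi_K$ is the common zero set of two equations, the naive complete vector fields fail to dominate, and one must enlarge the collection by hand. Controlling this enlargement uniformly in $K$, and handling the reducible fibers for $K\le 4$ as genuine exceptions rather than within the induction, is the technical heart of the paper. Everything else — the submersivity (already done), the surjectivity (already done), and the formal passage from a spanning family of complete vector fields to a dominating spray — is either cited or routine.
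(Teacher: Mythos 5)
Your proposal correctly reproduces the paper's strategy at the level of architecture: stratify the target, build complete fiber-tangent vector fields, prove spanning separately for $K=3,4,5$ and then by induction, and assemble a dominating spray via Gromov's flow composition, with the spanning step rightly identified as the crux. Two small corrections to your guesses, neither changing the verdict: the complete fields are not conjugates/shears of elementary one-parameter subgroups but the determinantal fields $D_{xyz}(P,Q)$ of~(\ref{e:fields}) applied to the reduced pair of defining equations for a fiber, and the stratification of $\mathbb{C}^4\setminus\{0\}$ depends on the parity of $K$ (conditions on $(a_3,a_4)$ when $K$ is odd, on $(a_1,a_2)$ when $K$ is even) with a further subdivision at the singular value $(a_3,a_4)=(0,1)$, rather than just "first two coordinates vanishing".
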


\begin{Cor}\label{c:factorization}
Let $n=1$ or $n=2$. Let $X$ be a finite dimensional reduced Stein space and $f\colon X \to
\operatorname{Sp}_{2n}(\mathbb{C})$ be a holomorphic
map. Assume that there exists a natural number $K$ and a continuous map $F\colon X\to (\mathbb{C}^{n(n+1)/2})^K\setminus S_K$ such that  \[\xymatrix{ & (\mathbb{C}^{n(n+1)/2})^K \setminus S_K\ar[d]^{\pi_{2n}\circ \Psi_{K}} \\ X \ar[r]_{\pi_{2n}\circ f} \ar[ur]^{F} & \mathbb{C}^{2n}\setminus \{ 0 \}}\] is commutative. Then there exists a holomorphic map $G\colon X\to (\mathbb{C}^{n(n+1)/2})^K\setminus S_K$, homotopic to $F$ via continuous maps $F_t\colon X\to (\mathbb{C}^{n(n+1)/2})^K\setminus S_K$, such that the diagram above is commutative for all $F_t$.
\end{Cor}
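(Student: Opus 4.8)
The plan is to recast the stated lifting problem as the problem of finding a holomorphic section of a holomorphic submersion over $X$, and then to solve it by invoking the Oka principle for stratified elliptic submersions; the only nontrivial input will be Proposition \ref{p:mainprop}.

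First I would pull back the submersion $\pi_{2n}\circ\Psi_K$ along $\pi_{2n}\circ f$. Put
\[
Z \;=\; \bigl\{\,(x,y)\in X\times\bigl((\mathbb{C}^{n(n+1)/2})^K\setminus S_K\bigr)\;:\;\pi_{2n}(f(x))=\pi_{2n}(\Psi_K(y))\,\bigr\},
\]
and let $p\colon Z\to X$ be the first projection. Since $\pi_{2n}\circ f$ is holomorphic and $\pi_{2n}\circ\Psi_K\colon(\mathbb{C}^{n(n+1)/2})^K\setminus S_K\to\mathbb{C}^{2n}\setminus\{0\}$ is a holomorphic submersion by Lemma \ref{l:submersive}, $p$ is a holomorphic submersion of the reduced complex space $Z$ onto $X$. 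Continuous (resp.\ holomorphic) sections of $p$ are in bijective correspondence with continuous (resp.\ holomorphic) maps $H\colon X\to(\mathbb{C}^{n(n+1)/2})^K\setminus S_K$ satisfying $\pi_{2n}\circ\Psi_K\circ H=\pi_{2n}\circ f$, via $H\mapsto\bigl(x\mapsto(x,H(x))\bigr)$; in particular the hypothesis furnishes a continuous section $\sigma_0$ of $p$ corresponding to $F$.

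Next I would check that $p$ is a stratified elliptic submersion. Let $\mathbb{C}^{2n}\setminus\{0\}=Y_0\supset Y_1\supset\cdots$ be the filtration by closed analytic subsets underlying the stratification from Proposition \ref{p:mainprop}, over each stratum of which $\pi_{2n}\circ\Psi_K$ admits a fibre-dominating spray. Pulling back by $\pi_{2n}\circ f$ gives a filtration $X=X_0\supset X_1\supset\cdots$ of $X$ by closed analytic subsets, and, since the fibre of $p$ over $x$ is canonically the fibre of $\pi_{2n}\circ\Psi_K$ over $\pi_{2n}(f(x))$, the pullback of a fibre-dominating spray over a stratum of $\mathbb{C}^{2n}\setminus\{0\}$ is a fibre-dominating spray for $p$ over the corresponding stratum of $X$. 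Hence $p\colon Z\to X$ is stratified elliptic; this is the standard fact that pullbacks of stratified elliptic submersions by holomorphic maps are again stratified elliptic.

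Finally I would apply the Oka principle for sections of stratified elliptic submersions over a reduced Stein space, as recalled in Section \ref{sprays}: the continuous section $\sigma_0$ of $p$ is homotopic, through continuous sections $\sigma_t$ of $p$, to a holomorphic section $\sigma_1$. Writing $\sigma_t(x)=(x,F_t(x))$ produces continuous maps $F_t\colon X\to(\mathbb{C}^{n(n+1)/2})^K\setminus S_K$ with $F_0=F$, with $G:=F_1$ holomorphic, and with $\pi_{2n}\circ\Psi_K\circ F_t=\pi_{2n}\circ f$ for every $t$, which is precisely the assertion. All the real difficulty sits in Proposition \ref{p:mainprop}, which is granted here; the only points in the above that still deserve care are verifying that pullback genuinely preserves the stratified-elliptic structure, and that the version of the Oka principle being quoted is the one valid for (possibly singular, finite dimensional) reduced Stein spaces rather than merely Stein manifolds.
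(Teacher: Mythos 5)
Your proposal is correct and follows the same route as the paper's own proof: pull back the stratified elliptic submersion $\pi_{2n}\circ\Psi_K$ by $\pi_{2n}\circ f$ to obtain a stratified elliptic submersion over the Stein base $X$, identify the hypothesized map $F$ with a continuous section, and invoke Theorem \ref{t:forstneric} to deform it to a holomorphic section through sections; you simply spell out the pullback construction and the preservation of the stratified spray structure in more detail than the paper, which states them and additionally remarks that one could instead appeal directly to the Oka-map property of (\ref{e:ellipticsubmersion}).
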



\begin{proof}
The pull back of (\ref{e:ellipticsubmersion}) by $ \pi_{2n} \circ f $ is a stratified elliptic submersion over the Stein base $ X $. Thus by Theorem \ref{t:forstneric} there is a homotopy from the given continuous section to a holomorphic section. This is equivalent to the desired homotopy $ F_t $. An even better way to perform this proof is to say that the map (\ref{e:ellipticsubmersion}) is an Oka map, see \cite[Corollary 6.14.4.(i)]{Forstneric:2011}, which yields the desired conclusion.

\end{proof}

\begin{Rem}\label{continuous parameter} The fact that the map (\ref{e:ellipticsubmersion}) is an Oka map, see \cite[Corollary 6.14.4.(i)]{Forstneric:2011} yields
a parametric version of Corollary \ref{c:factorization}. This means that the holomorphic map can be
replaced by a continuous map $f_P\colon X  \times P \to
\operatorname{Sp}_{2n}(\mathbb{C})$, which is holomorphic for each fixed parameter $p\in P$ and where  $P$ is a compact Hausdorff
topological space.
\end{Rem}

We need the following version of the Whitehead Lemma:

\begin{equation}\label{e:whitehead}.
\begin{aligned}
 & \begin{pmatrix} 1 & 0 & 0 & 0 \\ a & 1 & 0 & 0 \\ 0 & 0 & 1 & -a \\ 0 & 0 & 0 & 1 \end{pmatrix} = \\
 & =\begin{pmatrix} 1 & 0 & 0 & 0 \\ 0 & 1 & 0 & 0 \\ -a & -1 & 1 & 0 \\ -1 & 0 & 0 & 1 \end{pmatrix}
\begin{pmatrix} 1 & 0 & 0 & 0 \\ 0 & 1 & 0 & -a \\ 0 & 0 & 1 & 0 \\ 0 & 0 & 0 & 1 \end{pmatrix}
\begin{pmatrix} 1 & 0 & 0 & 0 \\ 0 & 1 & 0 & 0 \\ 0 & 1 & 1 & 0 \\ 1 & 0 & 0 & 1 \end{pmatrix}
\begin{pmatrix} 1 & 0 & 0 & 0 \\ 0 & 1 & 0 & a \\ 0 & 0 & 1 &  0\\ 0 & 0 & 0 & 1 \end{pmatrix}.\end{aligned}\end{equation}

\begin{proof}[Proof of Theorem \ref{t:mainthmrestate}]
We will prove the theorem for a single map. The existence of a uniform bound $N(d)$ follows as in the proof of Theorem \ref{t:mainthmtoprestate}.
Since a finite dimensional Stein space is finite dimensional as a topological space there are $K-2$ continuous mappings  \[G_1,\dots, G_{K-2}\colon X\to \mathbb{C}^{3}\] such that \[f(x)=M_{1}(G_1(x))\dots M_{K-2}(G_{K-2}(x)).\]
Choose a constant  symmetric $2 \times 2$ matrix $H$ with non-zero second row and replace the above factorization by 
 \[f(x)= M_1 (H) M_2 (0_{2,2}) M_{3}(G_1(x)-H) M_4 (G_2 (x))\dots M_{K}(G_{K-2}(x)).\]
 This factorization by $K$ continuous elementary symplectic matrices avoids the singularity set  $S_K$ and thus we found 
$F\colon X\to (\mathbb{C}^{3})^K\setminus S_K$ with $\Psi_K (F) =f$.

Using Corollary \ref{c:factorization} we know that $F_0:= F$ is homotopic to a holomorphic map $G=F_1$, via continuous maps $F_t$, such that \[\pi_4(f(x))=\pi_4\circ\Psi_K(F_t(x)), \hskip2mm 0 \le t \le 1 ,\] that is the last row of the matrices $\Psi_K(F_t(x))$ is constant. Therefore \begin{equation*}
\Psi_K(F_t(x))f(x)^{-1} = \begin{pmatrix} \widetilde{f}_{11,t}(x) & \widetilde{f}_{12,t}(x) & \widetilde{f}_{13,t}(x) & \widetilde{f}_{14,t}(x)    \\   \widetilde{f}_{21,t}(x) & \widetilde{f}_{22,t}(x) & \widetilde{f}_{23,t}(x) & \widetilde{f}_{24,t}(x) 
\\ \widetilde{f}_{31,t}(x) & \widetilde{f}_{32,t}(x) & \widetilde{f}_{33,t}(x) & \widetilde{f}_{34,t}(x) \\ 0 & 0 & 0 & 1
\end{pmatrix}
\end{equation*}
Since these matrices are symplectic it automatically follows that $ \widetilde{f}_{12,t}(x)\equiv 0 $, $ \widetilde{f}_{22,t}(x)\equiv 1 $ and $ \widetilde{f}_{32,t}(x)\equiv 0 $ so that  \begin{equation} \label{e:lastrow}
\Psi_K(F_t(x))f(x)^{-1} = \begin{pmatrix} \widetilde{f}_{11,t}(x) & 0 & \widetilde{f}_{13,t}(x) & \widetilde{f}_{14,t}(x)    \\   \widetilde{f}_{21,t}(x) & 1 & \widetilde{f}_{23,t}(x) & \widetilde{f}_{24,t}(x) 
\\ \widetilde{f}_{31,t}(x) & 0 & \widetilde{f}_{33,t}(x) & \widetilde{f}_{34,t}(x) \\ 0 & 0 & 0 & 1
\end{pmatrix}
\end{equation} and in addition the matrix \begin{equation} \label{e:submatrix}
 \widetilde{f_t}(x)=\begin{pmatrix} \widetilde{f}_{11,t}(x) & \widetilde{f}_{13,t}(x)     \\  
\\ \widetilde{f}_{31,t}(x) & \widetilde{f}_{33,t}(x) 
\end{pmatrix} \in \operatorname{Sp}_2( \mathbb{C} ) = \operatorname{SL}_2( \mathbb{C} ).
\end{equation} Since $\Psi_K(F_0(x))=f(x)$ we see that $\widetilde{f}_0=Id$ and thus the holomorphic map $\widetilde{f}:=\widetilde{f}_1\colon X \to \operatorname{SL}_2( \mathbb{C}) $ is null-homotopic.  Let $ \psi $ be the standard inclusion of $ \operatorname{Sp}_2 $ in $ \operatorname{Sp}_4 $, see for example \cite{Grunewald:1991}. By the main result from \cite{Ivarsson:2012} the matrix \begin{equation} \label{e:inverse} 
\psi(\widetilde{f}(x)^{-1})=\begin{pmatrix} \widetilde{f}_{33}(x) & 0 & -\widetilde{f}_{13}(x) & 0    \\   0 & 1 & 0 & 0 
\\ -\widetilde{f}_{31}(x) & 0 & \widetilde{f}_{11}(x) & 0 \\ 0 & 0 & 0 & 1
\end{pmatrix}
\end{equation} is a product of holomorphic elementary symplectic matrices. Therefore it suffices to show that 
\begin{equation}\label{e:prodinv} \Psi_K(G(x))f(x)^{-1} \cdot \psi(\widetilde{f}(x)^{-1})=  \begin{pmatrix} 1 & 0 & 0 & \widetilde{f}_{14}(x)    \\   -\widetilde{f}_{34}(x) & 1 & \widetilde{f}_{14}(x) & \widetilde{f}_{24}(x) 
\\ 0 & 0 & 1 & \widetilde{f}_{34}(x) \\ 0 & 0 & 0 & 1
\end{pmatrix} \end{equation} is a product of elementary symplectic matrices. In order to deduce (\ref{e:prodinv}) one has to use the fact that (\ref{e:lastrow}) is symplectic. Since 
\[ \begin{aligned} &\begin{pmatrix} 1 & 0 & 0 & \widetilde{f}_{14}(x)    \\   -\widetilde{f}_{34}(x) & 1 & \widetilde{f}_{14}(x) & \widetilde{f}_{24}(x) 
\\ 0 & 0 & 1 & \widetilde{f}_{34}(x) \\ 0 & 0 & 0 & 1
\end{pmatrix} = \\ &= \begin{pmatrix} 1 & 0 & 0 & 0  \\   -\widetilde{f}_{34}(x) & 1 & 0 & 0 
\\ 0 & 0 & 1 & \widetilde{f}_{34}(x) \\ 0 & 0 & 0 & 1
\end{pmatrix} \begin{pmatrix} 1 & 0 & 0  & \widetilde{f}_{14}(x)    \\   0 & 1 & \widetilde{f}_{14}(x) & \widetilde{f}_{14}(x) \widetilde{f}_{34}(x)+\widetilde{f}_{24}(x)   
\\ 0 & 0 & 1 & 0 \\ 0 & 0 & 0 & 1
\end{pmatrix}\end{aligned}\] the result follows by (\ref{e:whitehead}).
 
 Analysing this proof and using Remark
 \ref{continuous parameter} one sees that
 we can actually prove a parametric version of our main theorem.
 
 \begin{Thm}
Let $X$ be a finite dimensional reduced Stein space, $P$ a compact Hausdorff topological (parameter) space and $f\colon P\times X\to \operatorname{Sp}_{4}(\mathbb{C})$ be a continuous mapping, holomorphic for each fixed $p\in P$,  that is null-homotopic. Then there exist a natural number $K$ and continuous mappings, holomorphic for each fixed parameter $p\in P$, \[G_1,\dots, G_{K}\colon P\times X\to \mathbb{C}^{3}\] such that \[f(p,x)=M_{1}(G_1(p,x))\dots M_{K}(G_{K}(p,x)).\]
\end{Thm}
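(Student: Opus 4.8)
The plan is to deduce the parametric theorem by re-running the proof of Theorem~\ref{t:mainthmrestate} verbatim, but carrying the parameter $p\in P$ through every step, and invoking the parametric forms of all the auxiliary results that were already prepared for exactly this purpose. Concretely, the first step is to obtain a \emph{continuous} factorization of the $P\times X$-family into $K-2$ elementary symplectic factors: since $P\times X$ is still a finite-dimensional normal (indeed paracompact) topological space and $f$ is null-homotopic, Theorem~\ref{t:mainthmtoprestate} applies to the map $P\times X\to\operatorname{Sp}_4(\mathbb{C})$ and produces continuous $G_1,\dots,G_{K-2}\colon P\times X\to\mathbb{C}^3$ with $f(p,x)=M_1(G_1(p,x))\cdots M_{K-2}(G_{K-2}(p,x))$. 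Note that at this first stage no holomorphicity in $x$ is used, so there is nothing parametric to worry about yet.

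The second step is the same book-keeping trick as in the non-parametric proof: prepend $M_1(H)M_2(0_{2,2})$ for a fixed constant symmetric $2\times 2$ matrix $H$ with non-zero second row and relabel, obtaining a continuous $F\colon P\times X\to(\mathbb{C}^3)^K\setminus S_K$ with $\Psi_K\circ F=f$, so that the family now genuinely avoids the singular set $S_K$. The third step is where the parameter does real work: I would apply the parametric version of Corollary~\ref{c:factorization} recorded in Remark~\ref{continuous parameter} — which holds precisely because the map (\ref{e:ellipticsubmersion}) is an Oka map (\cite[Corollary 6.14.4.(i)]{Forstneric:2011}) and Oka maps satisfy the parametric Oka property over compact Hausdorff parameter spaces. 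This deforms $F=F_0$ through continuous $P$-families $F_t$ (holomorphic in $x$ for each fixed $p$ once $t=1$, and with $\pi_4\circ f$ preserved along the homotopy) to a map $G=F_1$ that is holomorphic in $x$ for each fixed $p$ and still lands in $(\mathbb{C}^3)^K\setminus S_K$.

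The fourth step is to reproduce, now with $p$ as a silent parameter, the linear-algebra reduction: because the last row of $\Psi_K(F_t(p,x))$ agrees with that of $f(p,x)$, the product $\Psi_K(F_t(p,x))f(p,x)^{-1}$ has the block shape (\ref{e:lastrow}), its relevant $2\times 2$ sub-block $\widetilde f_t(p,x)$ lies in $\operatorname{SL}_2(\mathbb{C})$ by (\ref{e:submatrix}), equals the identity at $t=0$, and hence $\widetilde f:=\widetilde f_1\colon P\times X\to\operatorname{SL}_2(\mathbb{C})$ is null-homotopic and holomorphic in $x$ for each fixed $p$. Here I would invoke the parametric version of the $\operatorname{SL}_2$ factorization theorem of \cite{Ivarsson:2012} (which likewise holds because the relevant submersion there is an Oka map) to write $\psi(\widetilde f(p,x)^{-1})$ as a product of elementary symplectic matrices with entries continuous on $P\times X$ and holomorphic in $x$. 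The final step is then the purely algebraic identity: multiplying out $\Psi_K(G(p,x))f(p,x)^{-1}\cdot\psi(\widetilde f(p,x)^{-1})$ gives the matrix in (\ref{e:prodinv}), which factors via the Whitehead-type identity (\ref{e:whitehead}); collecting all factors yields the desired $G_1,\dots,G_K\colon P\times X\to\mathbb{C}^3$.

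I expect the only genuine obstacle to be bureaucratic rather than mathematical: making sure that every tool used has a bona fide parametric statement over a \emph{compact Hausdorff} $P$ — this is fine for the Oka-principle inputs (Gromov--Forstneri\v{c}'s parametric Oka property for Oka maps covers arbitrary compact parameter spaces, and this is exactly what Remark~\ref{continuous parameter} asserts for (\ref{e:ellipticsubmersion}), while \cite{Ivarsson:2012} also proves its theorem in parametric form) — and that the homotopy $F_t$ can be taken to preserve $\pi_4\circ f$ fibrewise, which is built into the relative form of the Oka principle. One should also check that the initial continuous factorization from Theorem~\ref{t:mainthmtoprestate} is available with the parameter simply absorbed into the base space, which it is since $P\times X$ is again finite-dimensional and normal. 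No new ideas beyond the non-parametric proof are needed; the work is to cite the parametric avatars of each ingredient and to observe that every construction (prepending constant factors, forming $\widetilde f_t$, applying (\ref{e:whitehead})) is functorial in $p$.
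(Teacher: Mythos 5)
Your proposal reproduces the paper's own argument: the paper proves the parametric theorem by the single remark that one should ``analyse the proof'' of Theorem~\ref{t:mainthmrestate} and use Remark~\ref{continuous parameter} (the parametric Oka property for the Oka map~(\ref{e:ellipticsubmersion})); what you have written is precisely that analysis carried out, with the same ingredients (Theorem~\ref{t:mainthmtoprestate} applied to $P\times X$, prepending $M_1(H)M_2(0)$, the parametric Oka principle, the reduction to the parametric $\operatorname{SL}_2$ case, and the Whitehead identity~(\ref{e:whitehead})). One small caveat, which the paper itself leaves implicit: applying Theorem~\ref{t:mainthmtoprestate} to $P\times X$ presupposes that $P\times X$ is a finite-dimensional normal space, which is automatic only when $P$ has finite covering dimension; for a general compact Hausdorff $P$ this step would need a separate (parametric) justification of the continuous factorization.
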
 
\end{proof}

In order to complete the proof of the theorem we need to establish Proposition \ref{p:mainprop}, Lemma \ref{l:submersive}, and \ref{l:surjective}.

\begin{Rem} \label{r:generalization}
Proposition  \ref{p:mainprop} is the crucial ingredient in the proof of Theorem \ref{t:mainthmrestate}. Its proof is by far the most difficult part of the paper. As pointed out in Remark \ref{r:generalizeSub}, Lemma \ref{l:submersive} holds for general $n$.  
Also if Proposition \ref{p:mainprop} holds for some $n$ then Corollary
\ref{c:factorization} also holds for that $n$. Moreover the reduction of the size of the symplectic matrix from $ \operatorname{Sp}_{4} $ to $ \operatorname{Sp}_{2} $ done in the proof of Theorem \ref{t:mainthmrestate} generalizes easily to a reduction from $ \operatorname{Sp}_{2n} $ to $ \operatorname{Sp}_{2n-2} $ if Corollary \ref{c:factorization} holds for $n$ (see for example the proof of Lemma 4.4 in \cite{Grunewald:1991}). Therefore if Proposition \ref{p:mainprop} can be proven for $n=1,\dots,m$ then the following holds true. 
\begin{Conj}
 Let $X$ be a finite dimensional reduced Stein space and $f\colon X\to \operatorname{Sp}_{2m}(\mathbb{C})$ be a holomorphic mapping that is null-homotopic. Then there exist a natural number $K$ and holomorphic mappings \[G_1,\dots, G_{K}\colon X\to \mathbb{C}^{m(m+1)/2}\] such that \[f(x)=M_{1}(G_1(x))\dots M_{K}(G_{K}(x)).\]
\end{Conj}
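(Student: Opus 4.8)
The final statement is the conjecture, which is explicitly conditional: it asserts the factorization for $\operatorname{Sp}_{2m}(\mathbb{C})$ \emph{under the hypothesis} that Proposition~\ref{p:mainprop} has been established for every $n\in\{1,\dots,m\}$. So the task is not to prove Proposition~\ref{p:mainprop} itself (that is the hard analytic heart of the paper, which is only carried out for $n\le 2$), but rather to run the two reduction machines already exhibited in the paper in an induction on $m$. First I would record the base case: for $m=1$ the statement is exactly the $\operatorname{SL}_2(\mathbb{C})=\operatorname{Sp}_2(\mathbb{C})$ case, which is the main result of \cite{Ivarsson:2012}. Then I would set up the inductive step, assuming the factorization holds for $\operatorname{Sp}_{2(m-1)}(\mathbb{C})$.

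\textbf{Key steps, in order.} (1) Produce a continuous factorization: since a finite-dimensional Stein space is finite-dimensional as a topological space, Theorem~\ref{t:mainthmtoprestate} gives a continuous factorization of $f$ into finitely many elementary symplectic matrices, hence a continuous section $F\colon X\to(\mathbb{C}^{m(m+1)/2})^{K-2}$; after prepending two harmless constant factors (as in the proof of Theorem~\ref{t:mainthmrestate}, with a constant symmetric matrix $H$ whose last row is non-zero) one obtains $F\colon X\to(\mathbb{C}^{m(m+1)/2})^K\setminus S_K$ lifting $\pi_{2m}\circ f$. (2) Homotope to holomorphic: invoke Corollary~\ref{c:factorization}, which by Remark~\ref{r:generalization} is available for $n=m$ precisely because Proposition~\ref{p:mainprop} is assumed for $n=m$; this yields a holomorphic $G\colon X\to(\mathbb{C}^{m(m+1)/2})^K\setminus S_K$ with $\Psi_K(G)$ agreeing with $f$ on the last row, through a homotopy keeping the last row fixed. (3) Peel off one symplectic size: exactly as in the $\operatorname{Sp}_4\to\operatorname{Sp}_2$ argument of the proof of Theorem~\ref{t:mainthmrestate}, since the last row of $\Psi_K(G(x))f(x)^{-1}$ equals $(0,\dots,0,1)$ and the matrix is symplectic, the relevant column entries are forced, and one extracts a null-homotopic holomorphic map $\widetilde f\colon X\to\operatorname{Sp}_{2(m-1)}(\mathbb{C})$ (null-homotopic because at $t=0$ it is the identity). (4) Apply the inductive hypothesis to factor $\psi(\widetilde f^{-1})$, where $\psi$ is the standard block inclusion $\operatorname{Sp}_{2(m-1)}\hookrightarrow\operatorname{Sp}_{2m}$, into holomorphic elementary symplectic matrices. (5) Whitehead-type cleanup: multiplying $\Psi_K(G(x))f(x)^{-1}$ by $\psi(\widetilde f(x)^{-1})$ leaves a matrix of the special unipotent shape in \eqref{e:prodinv}, which the Whitehead identity \eqref{e:whitehead} (in its $\operatorname{Sp}_{2m}$ analogue) writes as a product of elementary symplectic matrices; collecting everything gives the desired factorization of $f$. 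This is essentially the content pointed to in Remark~\ref{r:generalization} via ``the proof of Lemma~4.4 in \cite{Grunewald:1991}''.

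\textbf{Main obstacle.} The genuinely load-bearing input is step~(2): Corollary~\ref{c:factorization} is only proved in the paper for $n=1,2$ because it rests on Proposition~\ref{p:mainprop} (stratified ellipticity of the last-row fibration), and that proposition is the part not established in general. Under the stated hypothesis this obstacle is assumed away, so what remains is bookkeeping: one must check that the reductions in steps~(3)--(5) go through verbatim for general $m$, in particular that the Whitehead identity \eqref{e:whitehead} and the block embedding $\psi$ have the obvious $\operatorname{Sp}_{2m}$ versions (they do, by the same $2\times2$ block manipulations), and that the symplectic relations \eqref{e:firstsymp}--\eqref{e:thirdsymp} force the vanishing of the off-diagonal entries in the appropriate ``pivot'' column exactly as they did for $\operatorname{Sp}_4$. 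I expect no new difficulty there; the only subtlety worth stating carefully is that each application of the inductive step adds a controlled number of factors, so the uniform bound $N(d)$ propagates (again by the compactness/disjoint-union trick of Theorem~\ref{t:mainthmtoprestate}).
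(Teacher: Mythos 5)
Your proposal is correct and follows essentially the same route the paper sketches in Remark~\ref{r:generalization}: Proposition~\ref{p:mainprop} for $n\le m$ gives Corollary~\ref{c:factorization} for those $n$, the continuous input comes from Theorem~\ref{t:mainthmtoprestate} after padding with two constant factors to dodge $S_K$, the symplectic relation $MJM^{T}=J$ with last row $e_{2m}$ forces the $m$-th column to be $e_m$ exactly as in the $\operatorname{Sp}_4$ case, and a Whitehead-type identity together with the block embedding $\psi$ and the inductive hypothesis for $\operatorname{Sp}_{2(m-1)}$ finishes the step (this is the content of the cited Lemma~4.4 of Grunewald--Mennicke--Vaserstein). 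You also correctly flag that the only genuine input is the assumed Proposition~\ref{p:mainprop}; everything else is the bookkeeping already carried out for $m=2$ in the proof of Theorem~\ref{t:mainthmrestate}.
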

\end{Rem}
In the case of a $1$-dimensional Stein space, i.e. an open Riemann surface, this Conjecture 
has been established in \cite{Ivarsson:2019}.
The condition of null-homotopy is automatically satisfied in this case, since an open Riemann surface is homotopy equivalent to a $1$-dimensional
CW-complex and the group  $\operatorname{Sp}_{2m}(\mathbb{C})$ is simply connected. The proof  uses the analytic ingredient that the Bass stable rank of $\mathcal{O} (X)$ is $1$ for an open Riemann surface and proceeds then by linear algebra arguments.

\section{Formulation in algebraic terms}\label{K-theory}

We relate our results to algebraic K-theory and reformulate
them in those terms. The following is a standard notion:

\begin{Def} For a commutative ring $R$  the set $U_m (R)$ of unimodular rows of lenght $m$ is defined as
$$\{ (r_1, r_2, \ldots, r_m) \in R^m : r_1, r_2, \ldots r_m \ \mathrm{ generate } \ R \ \mathrm{ as \ an\  ideal }\}$$
\end{Def}

In our main example,   the ring $\mathcal{O} (X) $ of holomorphic functions  on a Stein space $X$, a row $(f_1, f_2, \ldots, f_m) \in \mathcal{O}^m (X)$
is unimodular iff the functions $f_1, f_2, \ldots, f_m$ have no common zeros, a well known application of Cartan's Theorem B.

Since null-homotopy is an important assumption in our studies we
denote  the set of null-homotopic unimodular rows in $U_m (\mathcal{O} (X))$ by $U_m^0 (\mathcal{O} (X))$. This set can be seen
as the path-connected component of the space of holomorphic maps from $X$ to $\mathbb{C}^m \setminus \{0\}$ containing the constant map $(0,0, ,\ldots,0,1) = e_m$. By Grauerts Oka principle  $\mathbb{C}^m \setminus \{0\}= \operatorname{GL}_m  (\mathbb{C})/\operatorname{GL}_{m-1}(\mathbb{C})$ is an Oka manifold, therefore the path-connected components of continuous and holomorphic maps $X \to \mathbb{C}^m \setminus \{0\}$ are in bijection. This says that unimodular rows in $U_m (\mathcal{O} (X))$ are null-homotopic in the holomorphic sense iff they are null-homotopic in the continuous sense.

Algebraic K-theorists consider Chevalley groups over rings, in our example we consider the null-homotopic elements of them.

\begin{Def}
$\operatorname{Sp}^0_{2n} (\mathcal{O} (X))$ denotes the group of  null-homotopic holomorphic maps from a Stein space $X$ to the
symplectic group $\operatorname{Sp}_{2n} (\mathbb{C})$, which in other words is the path-connected component of the group $\operatorname{Sp}_{2n} (\mathcal{O} (X))$ containing the identity.  
      
\end{Def}

Again by Grauert's Oka principle  holomorphic maps $X \to \operatorname{Sp}_{2n} (\mathbb{C})$ are homotopic via holomorphic maps iff they are homotopic via continuous maps.

Clearly the last row of a matrix in $\operatorname{Sp}_{2n} (\mathcal{O} (X))$ is unimodular, i.e., an element of $U_{2n} (\mathcal{O} (X))$.  Whether a unimodular row in  $U_{2n} (\mathcal{O} (X))$ is the last row of a matrix in $\operatorname{Sp}_{2n} (\mathcal{O} (X))$ is by Oka-theory a purely topological problem. Let us illustrate this by an example.

Extending a unimodular row to an invertible matrix 
can be reformulated as follows: Given a trivial line sub-bundle 
of the trivial bundle $X \times \mathbb{C}^n$ of rank $n$ over $X$. Can it be complemented by a trivial bundle? 

This of course is not always the case: The (non-trivial) tangent bundle $T$ of the sphere $S^{2n+1}$  ($n\ge 4$) is the complement of the trivial normal bundle $N$
to the sphere $S^{2n+1}$ in $\mathbb{R}^{2n+2}$. To make this a holomorphic example consider $X$ to be a Grauert tube around $S^{2n+1}$, i.e., a
Stein manifold which has a strong deformation retraction $\rho$ onto
its totally real maximal dimensional submanifold $S^{2n+1}$. The
bundle $T$ is replaced by the complexified tangent bundle to the sphere pulled back onto $X$ by the retraction $\rho$ and equipped with its unique structure of holomorphic vector bundle (which is still not  a trivial bundle).
The pull-back of the complexified trivial bundle $N$ is still a
trivial line sub bundle of $X \times \mathbb{C}^{2n}$. Thus we found an example of a holomorphic row which cannot be completed
to an invertible matrix in $\operatorname{Gl_{2n}} (\mathcal{O} (X))$ and thus not to a matrix in $\operatorname{Sp}_{2n} (\mathcal{O} (X))$ either.

For null-homotopic rows the situation is better. 

\begin{Lem} Every element $U_{2n}^0 (\mathcal{O} (X))$ extends to a null-homotopic matrix $A\in \operatorname{Sp^0_{2n}} (\mathcal{O} (X))$.

\end{Lem}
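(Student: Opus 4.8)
The plan is to reduce the statement to a purely topological extension problem and then invoke the Oka principle together with Theorem~\ref{t:mainthmtoprestate}. First I would fix a null-homotopic unimodular row $u=(f_1,\dots,f_{2n})\in U_{2n}^0(\mathcal{O}(X))$ and observe that, since $\operatorname{Sp}_{2n}(\mathbb{C})\to\mathbb{C}^{2n}\setminus\{0\}$ (projection to the last row) is a fibre bundle with fibre the isotropy group $\operatorname{Sp}_{2n-1}$-like stabilizer, the obstruction to lifting $u$ \emph{continuously} to a map $X\to\operatorname{Sp}_{2n}(\mathbb{C})$ lies in the relevant cohomology of $X$ with coefficients in the homotopy groups of the fibre. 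The key point is that $u$ is null-homotopic, hence homotopic to the constant row $e_{2n}=(0,\dots,0,1)$, which lifts trivially to the identity matrix; lifting is a homotopy-invariant property once it is known for one representative of the homotopy class, so $u$ admits a continuous lift $\widetilde A\colon X\to\operatorname{Sp}_{2n}(\mathbb{C})$ with last row $u$. Moreover $\widetilde A$ may be taken null-homotopic: the homotopy $u_t$ from $e_{2n}$ to $u$ lifts to a homotopy of matrices starting at the identity (again using the homotopy lifting property of the bundle), and its endpoint is a continuous null-homotopic matrix with last row $u$.

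Next I would upgrade this continuous lift to a holomorphic one. Here one uses Grauert's Oka principle for the fibre bundle $\operatorname{Sp}_{2n}(\mathbb{C})\to\mathbb{C}^{2n}\setminus\{0\}$ over the Stein space $X$: the total space is a homogeneous space of a complex Lie group, hence the bundle is an Oka map, so the continuous section over $X$ (i.e.\ the continuous lift of the holomorphic row $u$) can be deformed, through lifts of $u$, to a holomorphic section, i.e.\ a holomorphic matrix $A\colon X\to\operatorname{Sp}_{2n}(\mathbb{C})$ with $\pi_{2n}\circ A=u$. Because the deformation stays within lifts of the fixed row and starts at a null-homotopic matrix, the resulting holomorphic $A$ is again null-homotopic, so $A\in\operatorname{Sp}^0_{2n}(\mathcal{O}(X))$. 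This completes the construction.

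The main obstacle I anticipate is the \emph{null-homotopy bookkeeping}: one must be careful that the lift produced by the homotopy lifting property is genuinely null-homotopic as a map into $\operatorname{Sp}_{2n}(\mathbb{C})$ and not merely a lift of a null-homotopic row, and similarly that the Oka-principle deformation preserves null-homotopy. Both are handled by always working relative to the basepoint/constant configuration: start the lifted homotopy at the identity matrix and check that each stage of the Oka deformation is connected back to the identity through the family. A clean way to package this is to note that $\operatorname{Sp}_{2n}(\mathbb{C})$ is connected and simply connected, so $\pi_1$ causes no trouble, and the fibre $F$ of $\pi_{2n}$ fits in the long exact sequence of the fibration; the vanishing $\pi_0(F)=\pi_1(F)=0$ (which holds since $F$ is itself connected and simply connected, being an affine bundle over a smaller symplectic group) gives both existence of the lift over any $X$ of the relevant dimension and, together with connectedness of $\operatorname{Sp}_{2n}(\mathbb{C})$, the homotopy statement. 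Everything else is a direct application of \cite[Corollary 6.14.4]{Forstneric:2011} or of Grauert's theorem as already quoted in the excerpt.
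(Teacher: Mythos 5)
Your proposal is correct and follows essentially the same route as the paper: lift the null-homotopy of the row through the Serre fibration $\pi_{2n}\colon\operatorname{Sp}_{2n}(\mathbb{C})\to\mathbb{C}^{2n}\setminus\{0\}$ starting from the constant matrix at the trivial end, obtaining a null-homotopic continuous lift, and then use the Oka property of the bundle (the fibre $\operatorname{Sp}_{2n-2}(\mathbb{C})\times\mathbb{C}^{4n-1}$ being Oka) to deform it to a holomorphic lift while keeping the last row fixed. The opening allusion to obstruction classes in cohomology is unnecessary ballast — the homotopy lifting property already does all the work, as you then correctly observe — but it does no harm.
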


\begin{proof} 
Let $F= (f_1, \ldots, f_{2n}): X \to \mathbb{C}^{2n} \setminus \{ 0\}$ be a null-homotopic holomorphic map, the homotopy to
the constant map $F_1(x) = e_{2n}$ be denoted by $F_t$, $t\in [0,1]$. 
The map $\pi_{2n}: \operatorname{Sp}_{2n} (\mathbb{C})\to \mathbb{C}^{2n} \setminus \{ 0\}$ is a locally trivial 
holomorphic fiber bundle with typical fibre $F \cong Sp_{2n-2} (\mathbb{C}) \times \mathbb{C}^{4n-1}$ which is an Oka manifold.
Our problem is to find a global section of the pull-back of this fibration by the map $F=F_0$. Since a locally  trivial bundle is a Serre fibration and the constant last row can be extended to a constant (thus null-homotopic) symplectic matrix we find a continuous section of this pull-back bundle over the
whole homotopy. Thus the restriction to $X\times \{ 0\}$ is a null-homotopic continuous symplectic matrix. 
Since the fiber $F$ is Oka we find a homotopy to a holomorphic symplectic matrix, which is still null-homotopic. 

\end{proof}
The notion of elementary symplectic matrices over a ring $R$ is the same as explained in Section \ref{s:continuous}.

Let $ W_n $ denote a $ (n\times n) $-matrix with entries in the ring $R$ satisfying $ W_n=W_n^T $ and $ 0_n $ the  $ (n\times n) $ zero matrix.  We call those matrices that are written in block form as \[ \begin{pmatrix} I_n & 0_n \\ W_n & I_n  \end{pmatrix} \mbox{ or }  \begin{pmatrix} I_n & W_n \\ 0_n & I_n  \end{pmatrix} \] {\it elementary symplectic matrices over $R$}. The group generated by them,
the elementary symplectic group, is denoted by $\operatorname{Ep_{2n}} (R)$. We consider the group $\operatorname{Ep_{2n}} (\mathcal{O} (X))$ which is easily seen to be a subgroup of (multiply the symmetric matrices $W_n$ by a real number $t\in [0, 1]$) $\operatorname{Sp^0_{2n}} (\mathcal{O} (X))$.

The meaning of Corollary \ref{c:factorization} in K-theoretic terms is now the following:
\begin{Prop} Let $n=1$ or $n=2$.
 For a Stein space $X$ the group $\operatorname{Ep_{2n}} (\mathcal{O} (X))$ acts transitively on the set of null-homotopic  unimodular rows $U_{2n}^0 (\mathcal{O} (X))$. 
\end{Prop}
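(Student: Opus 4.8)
The plan is to unwind the definition of transitive action and reduce everything to the factorization result already proved. Recall that $\operatorname{Ep}_{2n}(\mathcal{O}(X))$ acts on $U_{2n}^0(\mathcal{O}(X))$ by right multiplication: if $E\in\operatorname{Ep}_{2n}(\mathcal{O}(X))$ and $u\in U_{2n}^0(\mathcal{O}(X))$ is viewed as a row, then $u\mapsto uE$. First I would check this action is well-defined, i.e. that $uE$ is again a \emph{null-homotopic} unimodular row: unimodularity is clear since $E$ is invertible over $\mathcal{O}(X)$, and null-homotopy follows because each elementary generator $M_k(G)$ can be deformed to the identity by scaling the symmetric block $W_n$ by $t\in[0,1]$, so $E$ itself is connected to $I_{2n}$ through $\operatorname{Ep}_{2n}(\mathcal{O}(X))$, hence $uE$ is connected to $u$, which is null-homotopic by hypothesis. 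Transitivity then amounts to: given $u\in U_{2n}^0(\mathcal{O}(X))$, there is $E\in\operatorname{Ep}_{2n}(\mathcal{O}(X))$ with $uE=e_{2n}$, equivalently $e_{2n}E^{-1}=u$, i.e. every null-homotopic unimodular row is the last row of a product of elementary symplectic matrices.

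The key steps, in order, are as follows. (1) Fix $u=(f_1,\dots,f_{2n})\in U_{2n}^0(\mathcal{O}(X))$. By the Lemma just proved (every element of $U_{2n}^0(\mathcal{O}(X))$ extends to a null-homotopic matrix $A\in\operatorname{Sp}^0_{2n}(\mathcal{O}(X))$), choose such an $A$ with $\pi_{2n}(A)=u$. (2) Since $A$ is null-homotopic, apply the Main Theorem / Theorem \ref{t:mainthmrestate} (for $n=2$) or the result of \cite{Ivarsson:2012} together with the $\operatorname{Sp}_2=\operatorname{SL}_2$ identification (for $n=1$) to write $A=M_1(G_1)\cdots M_K(G_K)$ as a finite product of holomorphic elementary symplectic matrices. (3) The last row of $A$ is $u$, and the right-hand side is an element $E:=M_1(G_1)\cdots M_K(G_K)\in\operatorname{Ep}_{2n}(\mathcal{O}(X))$; since $e_{2n}$ is the last row of $I_{2n}$ and $\pi_{2n}$ intertwines right multiplication on rows with right multiplication on matrices (the last row of $BE$ depends only on the last row of $B$ and on $E$), we get $e_{2n}\cdot E=\pi_{2n}(E)=\pi_{2n}(A)=u$. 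Hence $E$ carries $e_{2n}$ to $u$, proving transitivity.

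Actually, the cleanest route avoids even invoking the extension Lemma: one can instead go directly through Corollary \ref{c:factorization}. Namely, the topological input — Theorem \ref{t:mainthmtoprestate} — already gives a \emph{continuous} factorization of any null-homotopic continuous map into $\operatorname{Sp}_{2n}(\mathbb{C})$, and in particular (after completing $u$ to a continuous symplectic matrix, which is possible by the Serre-fibration argument in the proof of the preceding Lemma, or by Vaserstein's topological result on completion of null-homotopic unimodular rows) a continuous lift $F\colon X\to(\mathbb{C}^{n(n+1)/2})^K\setminus S_K$ of $\pi_{2n}\circ f=u$ exists. Corollary \ref{c:factorization} then upgrades $F$ to a holomorphic $G$ with $\pi_{2n}\circ\Psi_K(G)=u$, and $\Psi_K(G)\in\operatorname{Ep}_{2n}(\mathcal{O}(X))$ sends $e_{2n}$ to $u$. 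I would present whichever of these two is shorter; both are essentially bookkeeping once the heavy machinery (Proposition \ref{p:mainprop} and the ensuing Corollary) is in hand.

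The main obstacle is not really in this Proposition at all — it is a formal consequence of results already established — but if I had to name the one genuinely non-formal point it is \emph{ensuring one can reach the specific normalization} $e_{2n}$: one must know that the singular set $S_K$ can be avoided, which is exactly the trick used in the proof of Theorem \ref{t:mainthmrestate} (inserting the extra factors $M_1(H)M_2(0)$ with $H$ having non-zero second row to dodge $S_K$), and one must confirm that $\pi_{2n}$ genuinely realizes the $\operatorname{Ep}_{2n}$-action on rows, i.e. that $\pi_{2n}(BE)$ is determined by $\pi_{2n}(B)$ and $E$ — a one-line matrix computation. Everything else is the translation of "$f$ factors through $\Psi_K$" into "$\operatorname{Ep}_{2n}$ acts transitively on last rows."
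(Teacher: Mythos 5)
Your proposal is correct, and your second route (extend $u$ to a holomorphic null-homotopic symplectic matrix $A$ via the preceding Lemma, factor $A$ continuously by Theorem~\ref{t:mainthmtoprestate}, insert two extra elementary factors to dodge $S_K$, then upgrade to a holomorphic lift $G$ with $\pi_{2n}\circ\Psi_K(G)=u$ via Corollary~\ref{c:factorization}) is exactly what the paper does. Your first route, which simply cites Theorem~\ref{t:mainthmrestate} (resp.~\cite{Ivarsson:2012} under $\operatorname{Sp}_2=\operatorname{SL}_2$ for $n=1$) to factor $A$ itself holomorphically, is also valid and logically shorter; the paper avoids it because it wants to exhibit this Proposition as a byproduct of the row-lifting machinery used \emph{inside} the proof of the main theorem rather than as a corollary of the finished theorem, but there is no circularity either way. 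Your observation that only a continuous extension of $u$ is strictly needed (since Corollary~\ref{c:factorization} consumes nothing but the holomorphicity of $\pi_{2n}\circ f=u$ and a continuous lift $F$) is a genuine, if minor, simplification of the paper's bookkeeping.
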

\begin{proof}
Let $u \in U_{2n}^0 (\mathcal{O} (X)) $ be a null-homotopic unimodular row. By the above Lemma we can extend it to a null-homotopic
symplectic matrix  $A \in \operatorname{Sp^0_{2n}} (\mathcal{O} (X))$. Now we just follow the beginning of the proof of Theorem \ref{t:mainthmrestate}. By Theorem \ref{t:mainthmtoprestate} we can factorize 
$A(x) $ as a product of elementary symplectic matrices with continuous entries. Adding two more elementary symplectic matrices we can achieve that the factorization avoids the singularity set $S_K$. Applying Corollary \ref{c:factorization} we know that $A_0:= A$ is homotopic to a holomorphic map $G=A_1$, via continuous maps $A_t$, such that \[\pi_4(A(x))=\pi_4\circ\Psi_K(A_t(x)), \hskip2mm 0 \le t \le 1 ,\] that is, the last row of the matrices $\Psi_K(A_t(x))$ is constant. Therefore \begin{equation*}
\Psi_K(A_t(x))A(x)^{-1} = \begin{pmatrix} \widetilde{a}_{11,t}(x) & \widetilde{a}_{12,t}(x) & \widetilde{a}_{13,t}(x) & \widetilde{a}_{14,t}(x)    \\   \widetilde{a}_{21,t}(x) & \widetilde{a}_{22,t}(x) & \widetilde{a}_{23,t}(x) & \widetilde{a}_{24,t}(x) 
\\ \widetilde{a}_{31,t}(x) & \widetilde{a}_{32,t}(x) & \widetilde{a}_{33,t}(x) & \widetilde{a}_{34,t}(x) \\ 0 & 0 & 0 & 1
\end{pmatrix}
\end{equation*}
This shows that the element $ \Psi_K(G (x))$ of $\operatorname{Ep_{2n}} (\mathcal{O} (X))$ has the last row equal to $u$ or equivalently
moves the constant row $e_{2n}$ to $u$.
\end{proof}

Let $\psi : \operatorname{SL}_2 \to \operatorname{Sp}_4$ be the standard embedding given by 
\begin{equation}  
\begin{pmatrix}  a & b   \\ 
c & d 
\end{pmatrix} \mapsto \begin{pmatrix}  a & 0 & b & 0    \\   0 & 1 & 0 & 0 
\\ c & 0 & d & 0 \\ 0 & 0 & 0 & 1
\end{pmatrix}
\end{equation} 

Continuing like in  the  proof of Theorem \ref{t:mainthmrestate} we see that it gives the following "inductive step".
\begin{Prop} 
 For a Stein space $X$ holds $$\operatorname{Sp^0_4} (\mathcal{O} (X)) =\operatorname{Ep_{4}} (\mathcal{O} (X)) \cdot \psi ( \operatorname{Sp^0_2} (\mathcal {O} (X))) .$$ 
\end{Prop}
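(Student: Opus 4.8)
The plan is to deduce this factorization result directly from the machinery already assembled in the proof of Theorem \ref{t:mainthmrestate}, specialized to a \emph{single} map rather than a family, and stripped of the reduction to $\operatorname{Sp}_2$ (which is precisely the factor $\psi(\operatorname{Sp}^0_2(\mathcal O(X)))$ we are allowed to keep here). So first I would take an arbitrary $A \in \operatorname{Sp}^0_4(\mathcal O(X))$, i.e.\ a null-homotopic holomorphic map $X \to \operatorname{Sp}_4(\mathbb C)$. By Theorem \ref{t:mainthmtoprestate} (valid since a finite-dimensional Stein space is finite-dimensional as a topological space) there is a continuous factorization $A = M_1(G_1)\cdots M_{K-2}(G_{K-2})$; as in the proof of Theorem \ref{t:mainthmrestate}, prepend $M_1(H)M_2(0_{2,2})$ with $H$ a constant symmetric $2\times 2$ matrix having nonzero second row, and absorb $H$ into the next factor, obtaining a continuous map $F\colon X\to(\mathbb C^3)^K\setminus S_K$ with $\Psi_K(F)=A$.

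Next I would invoke Corollary \ref{c:factorization} with $n=2$ (which applies because Proposition \ref{p:mainprop} holds for $n=2$): $F=F_0$ is homotopic through continuous maps $F_t\colon X\to(\mathbb C^3)^K\setminus S_K$ to a holomorphic map $G=F_1$, with $\pi_4\circ\Psi_K(F_t)=\pi_4\circ A$ constant in $t$. Then exactly the symplectic-matrix bookkeeping from the proof of Theorem \ref{t:mainthmrestate} shows that $\Psi_K(F_t(x))A(x)^{-1}$ has the block form of \eqref{e:lastrow}, with lower-right $2\times 2$ block the identity, last row $(0,0,0,1)$, entries $\widetilde a_{12,t}\equiv 0$, $\widetilde a_{22,t}\equiv 1$, $\widetilde a_{32,t}\equiv 0$ forced by the symplectic relations, and the submatrix $\widetilde{A_t}(x)=\begin{pmatrix}\widetilde a_{11,t}&\widetilde a_{13,t}\\ \widetilde a_{31,t}&\widetilde a_{33,t}\end{pmatrix}\in\operatorname{Sp}_2(\mathbb C)=\operatorname{SL}_2(\mathbb C)$. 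Since $F_0$ gives $\Psi_K(F_0)=A$, we get $\widetilde A_0=\mathrm{Id}$, so the holomorphic map $\widetilde A:=\widetilde A_1\colon X\to\operatorname{SL}_2(\mathbb C)$ is null-homotopic, i.e.\ $\widetilde A\in\operatorname{Sp}^0_2(\mathcal O(X))$.

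Finally, with $\psi$ the standard embedding, I would write
\[
\Psi_K(G(x))\,A(x)^{-1}\cdot\psi\!\left(\widetilde A(x)^{-1}\right)
=
\begin{pmatrix} 1 & 0 & 0 & \widetilde a_{14}(x) \\ -\widetilde a_{34}(x) & 1 & \widetilde a_{14}(x) & \widetilde a_{24}(x) \\ 0 & 0 & 1 & \widetilde a_{34}(x) \\ 0 & 0 & 0 & 1 \end{pmatrix},
\]
using once more that \eqref{e:lastrow} is symplectic to pin down the entries; denote this matrix $E(x)$. By the block decomposition displayed after \eqref{e:prodinv} together with the Whitehead identity \eqref{e:whitehead}, $E$ is a product of holomorphic elementary symplectic matrices, so $E\in\operatorname{Ep}_4(\mathcal O(X))$. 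Hence
\[
A(x)=\psi\!\left(\widetilde A(x)\right)\cdot E(x)^{-1}\cdot \Psi_K(G(x))\in \psi\!\left(\operatorname{Sp}^0_2(\mathcal O(X))\right)\cdot\operatorname{Ep}_4(\mathcal O(X)),
\]
and since both $\operatorname{Ep}_4(\mathcal O(X))$ and $\psi(\operatorname{Sp}^0_2(\mathcal O(X)))$ lie in the group $\operatorname{Sp}^0_4(\mathcal O(X))$, the reverse inclusion $\operatorname{Ep}_4(\mathcal O(X))\cdot\psi(\operatorname{Sp}^0_2(\mathcal O(X)))\subseteq\operatorname{Sp}^0_4(\mathcal O(X))$ is immediate; rearranging the product (conjugating $E^{-1}$ and $\Psi_K(G)$ past $\psi(\widetilde A)$ is unnecessary since we only need set-theoretic equality of the two subsets, but if a specific ordering $\operatorname{Ep}_4\cdot\psi(\operatorname{Sp}^0_2)$ is wanted one uses that $\psi(\operatorname{SL}_2)$ normalizes nothing special and instead simply notes $\psi(\widetilde A)E^{-1}\psi(\widetilde A)^{-1}\in\operatorname{Ep}_4$ because conjugation by $\psi(\widetilde A)$ — itself a product of elementaries over $\mathcal O(X)$ by \cite{Ivarsson:2012} — preserves $\operatorname{Ep}_4(\mathcal O(X))$). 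This gives the claimed equality. The only real content beyond Theorem \ref{t:mainthmrestate}'s proof is the observation that the argument never actually needs to \emph{eliminate} the $\operatorname{Sp}_2$-factor — it is produced as a genuine factor — so the main (very mild) obstacle is just being careful about the order of the factors and noting that $\psi(\operatorname{SL}_2(\mathcal O(X)))$-conjugation preserves $\operatorname{Ep}_4(\mathcal O(X))$, which follows from the $\operatorname{SL}$-factorization theorem of \cite{Ivarsson:2012} applied to $\psi(\widetilde A)$ together with the fact that elementary symplectic matrices conjugated by elementary symplectic matrices stay in $\operatorname{Ep}_4$.
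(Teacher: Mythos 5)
Your proposal follows the same route as the paper, which also just "continues the proof of Theorem~\ref{t:mainthmrestate}" and stops before eliminating the $\operatorname{Sp}_2$-factor; that is exactly the right idea. Two details, however, need fixing.

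First, a small algebraic slip: from $\Psi_K(G)A^{-1}\cdot\psi(\widetilde A^{-1})=E$ one gets $A=\psi(\widetilde A)^{-1}E^{-1}\Psi_K(G)=\psi(\widetilde A^{-1})E^{-1}\Psi_K(G)$, not $A=\psi(\widetilde A)E^{-1}\Psi_K(G)$ as you wrote. This is harmless for the conclusion since $\operatorname{Sp}^0_2(\mathcal O(X))$ is a group, but it should be corrected.

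Second, and more substantively, your parenthetical discussion of the factor ordering is off. Invoking \cite{Ivarsson:2012} to say $\psi(\widetilde A)$ is itself a product of holomorphic elementaries, and hence that conjugation by it preserves $\operatorname{Ep}_4(\mathcal O(X))$, is circular for the purposes of \emph{this} proposition: if $\psi(\widetilde A)\in\operatorname{Ep}_4(\mathcal O(X))$ then $A\in\operatorname{Ep}_4(\mathcal O(X))$ outright and the $\psi(\operatorname{Sp}^0_2)$-factor becomes redundant, collapsing the "inductive step'' the proposition is meant to record into the full Theorem~\ref{t:mainthmrestate}. The correct and elementary observation is that $\operatorname{Ep}_4(\mathcal O(X))$ and $\psi(\operatorname{Sp}^0_2(\mathcal O(X)))$ are both subgroups of the group $\operatorname{Sp}^0_4(\mathcal O(X))$, and for subgroups $H,K\le G$ the identity $G=KH$ implies $G=HK$ simply by applying the inverse map: if $g=kh$ then $g^{-1}=h^{-1}k^{-1}\in HK$, and $G=G^{-1}$. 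No appeal to any factorization theorem for $\operatorname{SL}_2$ (and no conjugation) is needed to swap the order. With that replacement your argument is complete.
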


In a similar way one can deduce from our earlier results (\cite{Ivarsson:2012} Proposition 2.8. and proof of Theorem 2.3.) the corresponding statements for
the special linear groups. The definition of the elementary group $\operatorname{E_{n}}$ and the inclusion $\psi$ of $\operatorname{SL}_{n-1}$ into 
$\operatorname{SL}_{n}$ are the usual ones.
\begin{Prop} 
 For a Stein space $X$ and any $n\ge 2$ the group $\operatorname{E_{n}} (\mathcal{O} (X))$ acts transitively on the set of null-homotopic  unimodular rows $U_{n}^0 (\mathcal{O} (X))$. 
\end{Prop}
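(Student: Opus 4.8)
The plan is to follow the same K-theoretic dictionary established for the symplectic case and transport it, via the already-cited results of \cite{Ivarsson:2012}, into the linear setting. The statement is that $\operatorname{E}_n(\mathcal{O}(X))$ acts transitively on $U_n^0(\mathcal{O}(X))$; equivalently, given a null-homotopic unimodular row $u = (f_1,\dots,f_n) \in U_n^0(\mathcal{O}(X))$, one can find a product $E$ of elementary matrices with holomorphic entries whose last row is $u$. First I would record, exactly as in the Lemma above on extending symplectic rows, that a null-homotopic unimodular row extends to a null-homotopic matrix in $\operatorname{SL}_n(\mathcal{O}(X))$: the bundle $\pi_n\colon \operatorname{SL}_n(\mathbb{C}) \to \mathbb{C}^n\setminus\{0\}$ is a locally trivial holomorphic fiber bundle with Oka fiber $\operatorname{SL}_{n-1}(\mathbb{C})\times \mathbb{C}^{n-1}$, so pulling back by the homotopy $F_t$ from $u$ to $e_n$ gives first a continuous section, then (since the fiber is Oka) a holomorphic one; its restriction to $X\times\{0\}$ is the desired matrix $A\in\operatorname{SL}_n^0(\mathcal{O}(X))$ with last row $u$.

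Next I would invoke the main theorem and Proposition~2.8 of \cite{Ivarsson:2012} (as the paper itself flags in the sentence preceding the statement), which provides precisely the analogue of Corollary \ref{c:factorization} and of the transitivity argument in the linear case: the relevant projection $(\mathbb{C}^{n-1})^K\setminus(\text{sing}) \to \mathbb{C}^n\setminus\{0\}$ is a stratified elliptic (indeed Oka) submersion, so a continuous factorization of $A$ avoiding the singular set can be homotoped, keeping the last row $\pi_n\circ A$ fixed throughout the homotopy, to a holomorphic factorization $A = E_1\cdots E_K$ by holomorphic elementary matrices $E_j = M_j(G_j)$, $G_j\colon X\to\mathbb{C}^{n-1}$ holomorphic. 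Reading off the last row, the product $E = E_1\cdots E_K \in \operatorname{E}_n(\mathcal{O}(X))$ satisfies $\pi_n(E) = \pi_n(A) = u$, i.e. $E\cdot e_n = u$, which is transitivity.

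The one point requiring a little care — and the place I expect the only genuine (if minor) obstacle — is the bookkeeping needed to arrange that the continuous factorization of $A$ misses the singularity set before applying the Oka principle, exactly the role played by the two extra factors $M_1(H)M_2(0_{2,2})$ in the proof of Theorem \ref{t:mainthmrestate}; in the linear case this is the analogous "add two elementary factors to escape the singular locus" trick from \cite{Ivarsson:2012}, and one must check the prescription works uniformly in $n$. Everything else is a transcription of the symplectic argument with $\operatorname{Sp}_{2n}$ replaced by $\operatorname{SL}_n$, $\operatorname{Ep}_{2n}$ by $\operatorname{E}_n$, $n(n+1)/2$ by $n-1$, and the word "symmetric matrix" by "row vector", so I would present it compactly, citing \cite{Ivarsson:2012} for the ellipticity and factorization inputs and referring back to the proof of the preceding Proposition for the transitivity mechanics rather than repeating the diagram chase.
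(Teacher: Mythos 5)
Your proposal is correct and follows exactly the route the paper intends: the paper compresses the proof of this Proposition into the single remark that it follows ``in a similar way'' from \cite{Ivarsson:2012} (Proposition 2.8 and the proof of Theorem 2.3), and your expansion — extend the null-homotopic row to a matrix in $\operatorname{SL}^0_n(\mathcal{O}(X))$ via the Oka principle for the last-row bundle, then homotope a continuous factorization (nudged off the singular locus) to a holomorphic one with fixed last row, then read off that row — is precisely that similar argument. The bookkeeping caveat you flag about escaping the singular set uniformly in $n$ is indeed the only place where one must look back at \cite{Ivarsson:2012} rather than simply transcribing the symplectic proof.
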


\begin{Prop} 
 For a Stein space $X$ and any $n\ge 2$ holds $$\operatorname{SL}^0_n (\mathcal{O} (X)) =\operatorname{E_{n}} (\mathcal{O} (X)) \cdot \psi ( \operatorname{SL}^0_{n-1} (\mathcal {O} (X))) .$$ 
\end{Prop}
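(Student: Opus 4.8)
The plan is to mimic, at the level of rings rather than Stein spaces, the two earlier results for the special linear group cited from \cite{Ivarsson:2012} (Proposition 2.8 and the proof of Theorem 2.3), so that the two statements for $\operatorname{SL}_n$ become formal consequences of the $\operatorname{SL}_n$-analogue of Theorem \ref{t:mainthmrestate}, exactly as the two symplectic Propositions above were deduced from Theorem \ref{t:mainthmrestate} and Corollary \ref{c:factorization}. First I would recall that the main result of \cite{Ivarsson:2012} states that any null-homotopic holomorphic map $f\colon X\to\operatorname{SL}_n(\mathbb{C})$ from a finite dimensional reduced Stein space is a finite product of holomorphic elementary matrices; in the present notation this says precisely $\operatorname{SL}_n^0(\mathcal{O}(X))=\operatorname{E}_n(\mathcal{O}(X))$, since $\operatorname{E}_n(\mathcal{O}(X))\subseteq\operatorname{SL}_n^0(\mathcal{O}(X))$ is clear (each elementary matrix is connected to the identity by scaling the off-diagonal entry by $t\in[0,1]$) and the converse is the cited theorem.

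For the transitivity statement, let $u=(f_1,\dots,f_n)\in U_n^0(\mathcal{O}(X))$ be a null-homotopic unimodular row. As in the symplectic case one first completes $u$ to a matrix $A\in\operatorname{SL}_n^0(\mathcal{O}(X))$: the projection $\operatorname{SL}_n(\mathbb{C})\to\mathbb{C}^n\setminus\{0\}$ onto the last row is a locally trivial holomorphic fibre bundle with Oka fibre $\operatorname{SL}_{n-1}(\mathbb{C})\ltimes\mathbb{C}^{n-1}$, so pulling back along the homotopy $F_t$ from $u$ to $e_n$ and using that the bundle is a Serre fibration produces a continuous section, hence a continuous null-homotopic completion, which the Oka property of the fibre then deforms to a holomorphic one; this is exactly the argument used in the Lemma preceding the symplectic Proposition and it needs no change. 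Now apply $\operatorname{SL}_n^0(\mathcal{O}(X))=\operatorname{E}_n(\mathcal{O}(X))$ to write $A$ as a product of holomorphic elementary matrices; since the last row of $A$ is $u$, the corresponding element of $\operatorname{E}_n(\mathcal{O}(X))$ carries $e_n$ to $u$, which is transitivity. Conversely, the stabiliser of $e_n$ under this action, intersected with the orbit, gives the second statement.

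For the "inductive step" $\operatorname{SL}_n^0(\mathcal{O}(X))=\operatorname{E}_n(\mathcal{O}(X))\cdot\psi(\operatorname{SL}_{n-1}^0(\mathcal{O}(X)))$, I would argue as in the proof of Theorem \ref{t:mainthmrestate}: given $f\in\operatorname{SL}_n^0(\mathcal{O}(X))$, by transitivity there is $E\in\operatorname{E}_n(\mathcal{O}(X))$ with $Ef$ having last row equal to $e_n$; a matrix in $\operatorname{SL}_n(\mathbb{C})$ with last row $e_n$ is block upper triangular with an $\operatorname{SL}_{n-1}$ block $g$ and a last column $v\in\mathbb{C}^{n-1}$ above the $1$. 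The last column $v$ can be cleared by one more elementary factor (a single elementary matrix with entries $-v$ in the last column), reducing $Ef$ to $\psi(g)$ up to an element of $\operatorname{E}_n$; since $f$ and hence $g$ is null-homotopic, $g\in\operatorname{SL}_{n-1}^0(\mathcal{O}(X))$, giving the claimed factorisation. The inclusion $\supseteq$ is immediate from $\psi(\operatorname{E}_{n-1})\subseteq\operatorname{E}_n$ together with $\operatorname{SL}_{n-1}^0=\operatorname{E}_{n-1}$.

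The only genuine subtlety — and the step I would flag as the main point to get right rather than a real obstacle — is that all three statements rest on the $\operatorname{SL}_n$ factorisation theorem of \cite{Ivarsson:2012}, which is already available in full generality for every $n\ge 2$ (unlike the symplectic situation, where Proposition \ref{p:mainprop} restricts us to $n=1,2$); so here there is no ellipticity computation to perform. What does require a line of care is the completion lemma: one must check that the fibre $\operatorname{SL}_{n-1}(\mathbb{C})\ltimes\mathbb{C}^{n-1}$ of $\operatorname{SL}_n(\mathbb{C})\to\mathbb{C}^n\setminus\{0\}$ is Oka — it is, being the total space of an affine bundle over the Oka manifold $\operatorname{SL}_{n-1}(\mathbb{C})$ — and that "null-homotopic" is preserved at each step, which is automatic since every operation (completion, multiplication by an element connected to the identity, passage to a block) visibly respects the identity component. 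With these remarks in place, the two $\operatorname{SL}_n$ Propositions follow verbatim from the arguments already given for $\operatorname{Sp}_4$, which is why the excerpt can state them without a separate proof.
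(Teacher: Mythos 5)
Your plan is the right one in outline, but the block-decomposition argument for the ``inductive step'' has a genuine gap at the clause ``since $f$ and hence $g$ is null-homotopic.'' Knowing that $Ef$ is null-homotopic as a map into $\operatorname{SL}_n(\mathbb{C})$ does \emph{not} by itself give null-homotopy of the block $g$ as a map into $\operatorname{SL}_{n-1}(\mathbb{C})$: the inclusion $\psi\colon \operatorname{SL}_{n-1}(\mathbb{C})\hookrightarrow \operatorname{SL}_n(\mathbb{C})$ is not injective on all homotopy groups (for example $\pi_4(\operatorname{SL}_2(\mathbb{C}))\cong\mathbb{Z}/2$ maps to $\pi_4(\operatorname{SL}_3(\mathbb{C}))=0$), so a map into $\operatorname{SL}_{n-1}$ can become null-homotopic after composing with $\psi$ without being null-homotopic itself. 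The argument the paper actually intends here — the one in the proof of Theorem~\ref{t:mainthmrestate}, and the cited proof of Theorem~2.3 of \cite{Ivarsson:2012} — avoids this by not picking an arbitrary $E$: it produces $g$ as the endpoint $\widetilde{f}_1$ of an explicit homotopy $\widetilde{f}_t$, $t\in[0,1]$, coming from the Oka deformation $F_t$, with $\widetilde{f}_0=\operatorname{Id}$. Null-homotopy of $g$ is then built into the construction rather than deduced from null-homotopy of $f$. Your remark to ``argue as in the proof of Theorem~\ref{t:mainthmrestate}'' is exactly the needed instruction, but the argument as you wrote it out quietly drops this tracking step.

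Your other observation — that the main theorem of \cite{Ivarsson:2012} already gives $\operatorname{SL}_n^0(\mathcal{O}(X))=\operatorname{E}_n(\mathcal{O}(X))$ for every $n\ge 2$, which makes the Proposition a tautology since $f=f\cdot\psi(\operatorname{Id})$ — is correct and is the shortest route given what is cited. The paper nevertheless states the Proposition in the ``inductive step'' form to exhibit the same decomposition structure that is actually being proved anew in the symplectic case, where the full factorization is only available for $n\le 2$; so the reduction argument, with the homotopy tracking, is the conceptually parallel proof.
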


\section{Stratified sprays}\label{sprays}

We will introduce the concept of a spray associated with a holomorphic submersion following \cite{Gromov:1989} and \cite{Forstneric:2002}. First we introduce some notation and terminology. Let $h\colon Z \to X$ be a holomorphic submersion of a complex manifold $Z$ onto a complex manifold $X$. For any $x\in X$ the fiber over $x$ of this submersion will be denoted by $Z_x$. At each point $z\in Z$ the tangent space $T_zZ$ contains {\it the vertical tangent space} $VT_zZ=\ker Dh$. For holomorphic vector bundles $p\colon E \to Z$ we denote the zero element in the fiber $E_z$ by $0_z$.

\begin{Def} \label{definspray}
Let $h\colon Z \to X$ be a holomorphic submersion of a complex manifold $Z$ onto a complex manifold $X$. A spray on $Z$ associated with $h$ is a triple $(E,p,s)$, where $p\colon E\to Z$ is a holomorphic vector bundle and $s\colon E\to Z$ is a holomorphic map such that for each $z\in Z$ we have 
\begin{itemize}
\item[(i)]{$s(E_z)\subset Z_{h(z)}$,}
\item[(ii)]{$s(0_z)=z$, and}
\item[(iii)]{the derivative $Ds(0_z)\colon T_{0_z}E\to T_zZ$ maps the subspace $E_z\subset T_{0_z}E$ surjectively onto the vertical tangent space $VT_zZ$.}
\end{itemize} 
\end{Def} 

\begin{Rem}
We will also say that the submersion admits a spray. A spray associated with a holomorphic submersion is sometimes called a (fiber) dominating spray.
\end{Rem}

One way of constructing dominating sprays, as pointed out by \textsc{Gromov}, is to find finitely many $\mathbb{C}$-complete vector fields that are tangent to the fibers and span the tangent space of the fibers at all points in $Z$. One can then use the flows $\varphi_j^t$ of these vector fields $V_j$ to define $s\colon Z\times \mathbb{C}^N\to Z$ via $s(z,t_1,\dots, t_N)=\varphi_1^{t_1}\circ \dots \circ \varphi_N^{t_N}(z)$ which gives a spray. 

\begin{Def}
Let $X$ and $Z$ be complex spaces. A holomorphic map $h\colon Z \to X$ is said to be a submersion if for each point $z_0\in Z$ it is locally equivalent via a fiber preserving biholomorphic map to a projection $p\colon U\times V \to U$, where $U\subset X$ is an open set containing $h(z_0)$ and $V$ is an open set in some $\mathbb{C}^d$.  
\end{Def}

We will need to use stratified sprays which are defined as follows.

\begin{Def} 
\label{definstratifiedspray}
We say that a submersion $h\colon Z\to X$ admits stratified sprays if there is a descending chain of closed complex subspaces $X=X_m\supset \cdots \supset X_0$ such that each stratum $Y_k = X_k\setminus X_{k-1}$ is regular and the restricted submersion $h\colon Z|_{Y_k}\to Y_k$ admits a spray over a small neighborhood of any point $x\in Y_k$.  
\end{Def}
\begin{Rem}
We say that the stratification $X=X_m\supset \cdots \supset X_0$ is associated with the stratified spray.
\end{Rem}

In \cite{Forstneric:2001}, see also \cite[Theorem 8.3]{Forstneric:2010}, the following theorem is proved. 
\begin{Thm}\label{t:forstneric}
Let $X$ be a Stein space with a descending chain of closed complex subspaces $X=X_m\supset \cdots \supset X_0$ such that each stratum $Y_k = X_k\setminus X_{k-1}$ is regular. Assume that $h\colon Z \to X$ is a holomorphic submersion which admits stratified sprays then any continuous section $f_0\colon X\to Z$ such that $f_0|_{X_0}$ is holomorphic can be deformed to a holomorphic section $f_1\colon X\to Z$ by a homotopy that is fixed on $X_0$. 
\end{Thm}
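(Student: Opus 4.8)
The plan is to prove this the way an Oka principle of this type is always proved: following Gromov's scheme in the form worked out in detail by Forstneri\v{c} and Prezelj, namely by combining an induction over a strictly plurisubharmonic Morse exhaustion of $X$ with an induction over the stratification $X_0\subset\cdots\subset X_m$. At the outer level, suppose we have already deformed $f_0$ (by a homotopy fixed on $X_0$) to a section that is holomorphic on a neighborhood of $X_{k-1}$; the goal is to push holomorphy up to a neighborhood of $X_k$ without changing anything near $X_{k-1}$. Since $X_k$ is a closed complex subspace of the Stein space $X$ it is itself Stein, and since $h$ is locally a product a section holomorphic near $X_{k-1}$ extends holomorphically to a neighborhood of $X_{k-1}$ inside $X_k$; what remains is the \emph{non-stratified} Oka principle for $h\colon Z|_{Y_k}\to Y_k$ — which admits a spray near every point of $Y_k$ by hypothesis — carried out relative to the already-holomorphic piece over $X_{k-1}$. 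Iterating $k=1,\dots,m$ (with base case $k=1$ supplied by the hypothesis that $f_0|_{X_0}$ is holomorphic) reduces everything to the non-stratified statement.

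For the non-stratified step I would exhaust $X$ by sublevel sets $K_j=\{\rho\le c_j\}$ of a strictly plurisubharmonic Morse exhaustion function $\rho$, with each $K_j$ compact and $\mathcal{O}(X)$-convex, and construct holomorphic sections $f_j$ over neighborhoods of $K_j$ that are homotopic to $f_0$ rel $X_0$, with $f_j$ uniformly close to $f_{j-1}$ on $K_{j-1}$, so that the sequence converges on compacts to the desired holomorphic section. The passage from $K_{j-1}$ to $K_j$ splits into two cases. In the \textbf{noncritical case} one writes $K_j$ as $K_{j-1}$ with finitely many small strictly pseudoconvex bumps attached; each bump is small enough that $h$ has a spray over it and that the continuous section can first be deformed to a holomorphic section over the bump (its base being Stein and contractible), after which one glues the section over $K_{j-1}$ to the one over the bump across the overlap of this Cartan pair. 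In the \textbf{critical case} $K_j$ is obtained (after noncritical moves) by attaching a totally real handle of dimension at most $\dim_{\mathbb{C}}X$; over the contractible handle one homotopes the continuous section to agree near the attaching sphere with a holomorphic section, approximates it on the totally real handle by a holomorphic section (Oka--Weil/Mergelyan on a totally real set), and again glues.

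The heart of the matter — and the step I expect to be the main obstacle — is the \emph{gluing lemma}: given two holomorphic sections over the members $A,B$ of a Cartan pair that are $C^0$-close over $A\cap B$, produce a single holomorphic section over $A\cup B$ close to the first over $A$. The only available tool is the spray $(E,p,s)$: over $A\cap B$ the transition carrying one section to the other is close to the identity, so by (ii)--(iii) of Definition~\ref{definspray} it can be written as $z\mapsto s(\xi(z))$ for a small holomorphic section $\xi$ of $E$ over $A\cap B$; one then splits $\xi=\xi_A-\xi_B$ with $\xi_A,\xi_B$ small holomorphic sections of $E$ over $A,B$ by the linear Cartan splitting lemma, and corrects the two given sections by $s(\xi_A)$ and $s(\xi_B)$ so that they match on the overlap. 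Making this precise requires a nonlinear Cartan-type iteration (because composing spray maps is nonlinear), uniform estimates ensuring convergence, and bookkeeping so that every intermediate deformation is a homotopy fixed on $X_0$ and, in the stratified induction, on $X_{k-1}$. The remaining ingredients — existence of the Morse exhaustion with $\mathcal{O}(X)$-convex sublevel sets, the bump/handle decomposition, and Oka--Weil approximation on totally real handles — are standard though lengthy.
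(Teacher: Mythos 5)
The paper does not prove Theorem~\ref{t:forstneric}: it is quoted as a black-box result from Forstneri\v{c}--Prezelj \cite{Forstneric:2001} and \cite[Theorem~8.3]{Forstneric:2010}, so there is no in-paper argument to compare your proposal against. Your sketch is a faithful outline of the Gromov--Forstneri\v{c}--Prezelj scheme those references actually carry out: a double induction, over the stratification $X_0\subset\cdots\subset X_m$ and over a strictly plurisubharmonic Morse exhaustion, with the technical core being the nonlinear Cartan-type gluing of holomorphic sections over Cartan pairs via dominating sprays. One caveat worth making explicit in your outline is that the hypothesis of Definition~\ref{definstratifiedspray} only provides a spray over a \emph{small neighborhood} of each point of each stratum $Y_k$, not a fiber-dominating spray over all of $Y_k$; consequently the bump/handle attachments and the gluing steps must be carried out on sets small enough to lie inside such a neighborhood, and the stratum-level induction should be phrased as the \emph{relative} Oka principle---deforming a section already holomorphic near $X_{k-1}$ to one holomorphic near $X_k$ while staying close and fixed (up to the permitted homotopy) over $X_{k-1}$. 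That relative, localized machinery is precisely what \cite{Forstneric:2001} builds, and you correctly identify the nonlinear splitting step as the main technical obstacle. As a blind reconstruction, your proposal tracks the proof strategy of the cited references accurately.
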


\section{Proof of Lemma \ref{l:surjective} and \ref{l:submersive}}
\label{technical1}


Lemma \ref{l:surjective} and \ref{l:submersive} hold for any size matrix. In 
this section we therefore look at $(2n \times 2n)$ matrices. Given two vectors 
$\vec{a}$ and $\vec{b}$ in $\mathbb{C}^{n}$ (i.e. $ n \times 1$ matrices), we denote
by
\[ \begin{pmatrix} \vec{a} \\ \vec{b} \end{pmatrix} \]
the obvious vector in $\mathbb{C}^{2n} $.

We shall consider products of $(2n \times 2n)$-matrices 
\[ \begin{pmatrix} I_n & 0 \\ Z_1 & I_n \end{pmatrix}\begin{pmatrix} I_n & W_1 \\ 0 & I_n \end{pmatrix}\begin{pmatrix} I_n & 0 \\ Z_2 & I_n \end{pmatrix}\begin{pmatrix} I_n & W_2 \\ 0 & I_n   \end{pmatrix}\cdots \]
where $Z_1,Z_2,\cdots$ and $W_1, W_2, \cdots$ are $n \times n$ matrices of variables
\[Z_k = (z_{k,ij}) , W_k= (w_{k,ij}) , 1\le i,j \le n \]
They are symmetric, i.e. $z_{k,ij}=z_{k,ji}$ and $w_{k,ij}=w_{k,ji}$. We call the variables $z_{k,n1},\cdots ,z_{k,nn}$ \emph{last row variables} (this term does not apply to the $w$-variables). If we have $K$ factors, there are $K\frac{n(n+1)}{2}$ variables. We will
also think of the $K$-tuple $(Z_1,W_1,Z_2,W_2,\cdots) $ as a point in  $\mathbb{C}^{K\frac{n(n+1)}{2}}$.
We will study the last row of this product, which is a map $\Phi_K \colon \mathbb{C}^{K\frac{n(n+1)}{2}} \to \mathbb{C}^{2n} \setminus \{0\}$. We prefer to work with the transpose of this row, which we denote by $P^K$, a vector in $\mathbb{C}^{2n}$.
It follows that 
$$
P^1 = \begin{pmatrix} \vec{z} \\ \vec{e_n} \end{pmatrix}
$$
where $\vec{z}=(z_{1,n1},\cdots,z_{1,nn})^T$ and $\vec{e_n}$ is the last standard
basis vector of $\mathbb{C}^{n}$.

The set $S_K$ for $K \ge 2$ is now defined as the set of $K$-tuples of symmetric matrices 
$(Z_1,W_1,\cdots)$ such that in the first $K-1$ matrices all the last row variables
(of the Z's) are $0$ and the columns of all of the W's do not span $\mathbb{C}^{n}$.
(This means the totality of all $W_i$ columns of the $K-1$ first factors.)

\begin{Lem} \label{l:surjection} $P^K : \mathbb{C}^{K\frac{n(n+1)}{2}} \setminus S_K
\rightarrow  \mathbb{C}^{2n} \setminus \{0\} $  is surjective for $K \ge 3$.
\end{Lem}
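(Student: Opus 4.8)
The plan is to prove surjectivity by an explicit constructive argument, exhibiting for each target vector $P = \begin{pmatrix} \vec{a} \\ \vec{b}\end{pmatrix} \in \mathbb{C}^{2n}\setminus\{0\}$ a point in $\mathbb{C}^{K\frac{n(n+1)}{2}}\setminus S_K$ mapping to it under $P^K$. First I would record the recursion for $P^K$ in terms of $P^{K-1}$: multiplying the product on the right by one more elementary factor transforms the last row, and writing $P^{K-1} = \begin{pmatrix}\vec{u}\\\vec{v}\end{pmatrix}$, appending a lower-triangular block $\begin{pmatrix} I & 0 \\ Z & I\end{pmatrix}$ sends it to $\begin{pmatrix}\vec{u} + Z\vec{v}\\ \vec{v}\end{pmatrix}$ (with $Z$ symmetric), while appending an upper-triangular block $\begin{pmatrix} I & W \\ 0 & I\end{pmatrix}$ sends it to $\begin{pmatrix}\vec{u}\\ \vec{v} + W\vec{u}\end{pmatrix}$. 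The key elementary fact I would isolate as a sub-lemma: for a symmetric $n\times n$ matrix $Z$ and vectors $\vec{v},\vec{w}$, the equation $Z\vec{v} = \vec{w}$ is solvable in symmetric $Z$ provided $\vec{v}\neq 0$ (e.g. if $v_j\neq 0$ one can solve explicitly; when $\vec{v}=\vec{e}_n$ one may even take the last-row variables of $Z$ to be exactly the entries of $\vec{w}$, which is the case that matters for controlling membership in $S_K$).

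Next I would run the construction in stages, treating cases by whether the ``lower part'' $\vec{b}$ of the target is zero. If $\vec{b}\neq 0$: start from $P^1 = \begin{pmatrix}\vec{z}\\\vec{e}_n\end{pmatrix}$; choose the first upper block $W_1$ so that after step two the lower part becomes some fixed nonzero vector whose columns-span condition I can control (in fact choosing $W_1$ to make $\vec{v}$ generic ensures the $W$-columns already span $\mathbb{C}^n$ once we have enough factors, so we will be outside $B_K$ automatically); then use the remaining lower blocks to adjust the upper part to $\vec{a}$ and a final lower/upper pair to land exactly on $\begin{pmatrix}\vec{a}\\\vec{b}\end{pmatrix}$ — here solvability of $Z\vec{v}=\vec{w}$ for nonzero $\vec{v}$ is what is used. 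Since $K\ge 3$ there are enough factors to perform ``fix the lower part'' then ``fix the upper part'' then ``restore the lower part'', and the last-row variables of the $K$-th matrix are free, so we can always arrange the chosen point to avoid $A_K$ (hence avoid $S_K = A_K\cap B_K$). If $\vec{b}=0$, then $\vec{a}\neq 0$; here I would first use an upper block to create a nonzero lower part, then proceed as before, ending with a lower block chosen so the final lower part returns to $0$ while the $K$-th last-row variables remain nonzero, again keeping us out of $A_K$.

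The main obstacle I anticipate is not the surjectivity \emph{onto} $\mathbb{C}^{2n}\setminus\{0\}$ per se — that is essentially linear algebra via the two recursion moves — but rather the bookkeeping needed to guarantee the preimage can always be chosen \emph{outside} $S_K$, i.e. that we never get forced simultaneously into $A_K$ (all last-row $z$-variables of the first $K-1$ factors vanish) \emph{and} into $B_K$ (the totality of $W$-columns fails to span). The clean way around this is to observe that $S_K$ is ``small'': for any target we have at least a one-parameter (in fact positive-dimensional) family of valid factorizations — e.g. the last-row variables of the final matrix, or the freedom in choosing the intermediate vectors and the non-last-row entries of the $Z$'s and all entries of the $W$'s — and $S_K$ imposes nontrivial closed conditions, so a generic choice within the family avoids it. Concretely I would just exhibit one explicit choice: arrange for some last-row variable in one of the first $K-1$ lower factors to be a nonzero constant (which we are free to do since $K\ge 3$ gives a ``spare'' lower factor), which by itself already forces the point out of $A_K$ and hence out of $S_K$, independently of the span condition. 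This reduces the whole statement to the unconditional surjectivity of the two recursion moves plus the trivial remark that one spare nonzero last-row entry suffices.
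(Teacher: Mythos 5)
Your proposal is correct and follows essentially the same route as the paper: reduce to the minimal case $K=3$ (setting extra factors to zero), use the two recursion moves for $P^K$ together with the fact that $Z\vec v=\vec w$ is solvable in symmetric $Z$ whenever $\vec v\neq 0$, and observe that forcing a last-row variable of a non-final lower factor to be nonzero already places the point outside $A_K\supset S_K$. The paper packages the case distinction on $\vec b$ more compactly by first choosing $Z_2$ so that $\vec z := \vec a - Z_2\vec b \neq 0$, but that is only a cosmetic simplification of your argument.
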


\begin{proof}
We prove the result for $K=3$. For $K>3$, simply put $W_2=Z_3=W_3=\cdots =0$. 
The proof uses an easy fact from linear algebra; given two vectors $\vec{c}$ and
$\vec{d}$ in $\mathbb{C}^{n}$ with $\vec{c} \ne \vec{0}$ there is a symmetric 
matrix $M$ such that $M\vec{c}=\vec{d}$. Now let
$$
\begin{pmatrix} \vec{a} \\ \vec{b} \end{pmatrix} \in \mathbb{C}^{2n} \setminus \{0\}.
$$
Pick any symmetric matrix $Z_2$ such that $\vec{z}=\vec{a}-Z_2\vec{b} \ne \vec{0}$
and let $Z_1$ be any symmetric matrix whose last row is $\vec{z}$ and $W_1$ a 
symmetric matrix such that $W_1 \vec{z} = \vec{b}-\vec{e_n}$. Then 
$(Z_1,W_1,Z_2) \notin S_3$ and for this choice we have
$$
P^3 = \begin{pmatrix} I_n & Z_{2} \\ 0 & I_n \end{pmatrix}
\begin{pmatrix} I_n & 0 \\ W_1 & I_n \end{pmatrix}
\begin{pmatrix} \vec{z} \\ \vec{e_n} \end{pmatrix}=
\begin{pmatrix} I_n & Z_{2} \\ 0 & I_n \end{pmatrix}
\begin{pmatrix} \vec{z} \\ \vec{b} \end{pmatrix}=
\begin{pmatrix} \vec{a} \\ \vec{b} \end{pmatrix}.
$$

\end{proof}

By a slight abuse of notation, we denote the Jacobian matrix of $\Phi_K$ by $JP^K$. This is a $(2n \times K\frac{n(n+1)}{2})$-matrix whose columns are the derivatives of $P^K$ with respect to one particular variable. We denote the components of $P^K$ by $P^K_i$, $1\le i \le 2n$. It follows that
\begin{equation}\label{e:P2k+1} 
P^{2k+1} = \begin{pmatrix} I_n & Z_{k+1} \\ 0 & I_n \end{pmatrix} P^{2k}
\end{equation}
\begin{equation}\label{e:P2k+2}
P^{2k+2} = \begin{pmatrix} I_n & 0 \\ W_{k+1} & I_n \end{pmatrix} P^{2k+1}
\end{equation}

We shall look at the final part of $JP^{2k+1}$, the part where we differentiate with respect to the new variables $z_{k+1,11},\cdots ,z_{k+1,n1},z_{k+1,22},\cdots, z_{k+2,2n},\cdots, z_{k+1,nn}$. This is a $(2n \times \frac{n(n+1)}{2})$-matrix. The column where we differentiate with respect to $z_{k+1,ij}$ will consist of $P^{2k}_{n+i}$ in row number $j$ and $P^{2k}_{n+j}$ in row number $i$. Hence the bottom half of this matrix is zero and we only look at the upper half, an $(n \times \frac{n(n+1)}{2})$-matrix which we denote by $A_{k+1}$. If we consider just the columns which contain one particular $P^{2k}_{n+i}$, we get a square $(n \times n)$-matrix whose $i$-th row is $(P^{2k}_{n+1},\cdots ,P^{2k}_{2n})$, has $P^{2k}_{n+i}$ along the diagonal and is otherwise zero. The determinant of this submatrix is $(P^{2k}_{n+i})^n$.

The situation is similar for the final part of $JP^{2k+2}$, except now the top half is zero and the bottom half $B_{k+1}$ contains $P^{2k+1}_1, \cdots ,P^{2k+1}_n$ in the same pattern as for $A_{k+1}$.

In the proof of the next lemma it will be convenient to use the following notation: if $A$ and $B$ are two matrices with the same column length, we let $A \cup B $ denote the matrix obtained by extending $A$ with $B$ to the right. $e_{2n}$ denotes the last vector in the standard basis of $\mathbb{C}^{2n} $.

\begin{Lem} \label{l:submersion} 
$P^K$ is a submersion exactly on the set $\mathbb{C}^{K\frac{n(n+1)}{2}} \setminus S_K$.

If $K=2k$ and all the last row variables are zero, then $P^{2k}=e_{2n}$ and the span of the bottom half of the $JP^{2k}$ columns equals the span of the columns of $W_1,W_2,\cdots, W_k $.
\end{Lem}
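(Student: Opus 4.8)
\textbf{Proof plan for Lemma \ref{l:submersion}.}

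The plan is to prove both assertions by induction on $K$, propagating the recursions \eqref{e:P2k+1} and \eqref{e:P2k+2} together with the explicit description of the new columns of the Jacobian (the blocks $A_{k+1}$ and $B_{k+1}$) derived just above the statement. Write $JP^K = \left(\text{old columns}\right)\cup\left(\text{new columns}\right)$, where the new columns are those obtained by differentiating with respect to the variables of the last factor. The new columns of $JP^{2k+1}$ are $\binom{A_{k+1}}{0}$ and those of $JP^{2k+2}$ are $\binom{0}{B_{k+1}}$; multiplying the old columns of $JP^{2k}$ by $\left(\begin{smallmatrix} I_n & Z_{k+1}\\ 0 & I_n\end{smallmatrix}\right)$ gives the old columns of $JP^{2k+1}$, and likewise for the even step. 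First I would establish the base case: for $K=2$, $P^2=\binom{\vec z}{\vec e_n}+\binom{W_1\vec z}{0}$ with $\vec z$ the last-row variables of $Z_1$; a direct inspection shows $JP^2$ has rank $2n$ unless $\vec z=0$ \emph{and} $\operatorname{Rank}W_1<n$, which is exactly the complement of $\mathbb C^{2\cdot n(n+1)/2}\setminus S_2$.

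For the inductive step I would argue the two directions separately. \emph{Surjectivity of $P^K$ off $S_K$ forces submersivity there is false in general}, so instead: if the last-row variables of the first $K-1$ factors are not all zero, or the $W_i$-columns (of the first $K-1$ factors) span $\mathbb C^n$, I claim $P^K$ is submersive. This is where the key computational input lies: using the determinant observations above, each block $A_{j+1}$ contributes an $(n\times n)$ submatrix of determinant $(P^{2j}_{n+i})^n$ for every $i$, and similarly $B_{j+1}$ contributes $(P^{2j+1}_i)^n$. Thus the upper half of the column span of $JP^K$ contains $\mathbb C^n$ as soon as some $P^{2j}_{n+i}\neq 0$ for the appropriate index, and the lower half is controlled by the $P^{2j+1}_i$ and the accumulated $W$-columns transported forward by the block-triangular factors. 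Tracking how $\left(\begin{smallmatrix} I_n & Z_{k+1}\\ 0 & I_n\end{smallmatrix}\right)$ and $\left(\begin{smallmatrix} I_n & 0\\ W_{k+1} & I_n\end{smallmatrix}\right)$ move these spans, one shows: either some nonzero last-row variable makes one of the relevant $P$-components nonvanishing (giving full rank immediately), or, in the degenerate case where all last-row variables of the first $K-1$ factors vanish, $P^{2k}$ collapses to $e_{2n}$ (verified by the recursions, since $Z_j$ then kills the top and $W_j$ acts trivially on $e_n$) and the lower half of the column span of $JP^{2k}$ is exactly $\operatorname{span}(W_1,\dots,W_k)$ — which is the second assertion of the lemma, and which is all of $\mathbb C^n$ precisely when the point is not in $S_{2k}$.

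\emph{Conversely}, on $S_K$ the map is not submersive: there all last-row variables of the first $K-1$ factors vanish, so the analysis of the previous paragraph applies and $JP^K$ restricted to the first $K-1$ factors' variables has column span whose lower half is $\operatorname{span}(W_1,\dots,W_{K-1})\subsetneq\mathbb C^n$ and whose upper half is forced (because the relevant $P$-components vanish) into a proper subspace; the final factor contributes only $\frac{n(n+1)}{2}\le$ too few independent directions in the remaining coordinates to recover full rank $2n$. I would make this precise by exhibiting a fixed nonzero covector annihilating all columns of $JP^K$ at such a point. The main obstacle I anticipate is the bookkeeping in the forward-transport argument: one must check that conjugating the accumulated column spans by the block-unipotent factors $\left(\begin{smallmatrix} I_n & Z_{k+1}\\ 0 & I_n\end{smallmatrix}\right)$ and $\left(\begin{smallmatrix} I_n & 0\\ W_{k+1} & I_n\end{smallmatrix}\right)$ neither destroys full rank once achieved nor artificially creates it, and simultaneously that the ``last factor has no constraints'' asymmetry (only the first $K-1$ factors enter $S_K$) is respected — this is exactly the place where the structure of $S_K$, as opposed to a symmetric condition on all $K$ factors, gets used.
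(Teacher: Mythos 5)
Your plan follows the same inductive architecture as the paper's proof (base case, then propagate via the recursions \eqref{e:P2k+1}, \eqref{e:P2k+2} and the $A_{k+1}$, $B_{k+1}$ blocks, using the second assertion as the carried inductive datum), so the overall route is right. However, the details contain a genuine and recurring error that would break the proof.

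Your base case is false. You claim $JP^2$ has rank $2n$ unless $\vec z=0$ \emph{and} $\operatorname{Rank}W_1<n$, and you identify this set with $S_2$. Neither is correct. By definition $S_K$ involves only the $W$'s of the \emph{first $K-1$} factors; for $K=2$ those factors are $\{Z_1\}$ alone, which contains no $W$, so the span condition is vacuously satisfied and $S_2=\{\vec z=0\}$ (one can also read this off the explicit $A_K\cap B_K$ description in Section \ref{overview}: $W_2$ is the empty matrix, so $B_2$ is everything). Moreover $P^2$ is in fact \emph{never} submersive when $\vec z=0$: in that case $P^1_i=z_{1,ni}=0$ for all $i$, so $B_1=0$, and after deleting zero columns $JP^2=\left(\begin{smallmatrix}I_n\\ W_1\end{smallmatrix}\right)$ has rank $n$ regardless of $W_1$. (Also, in passing, your formula $P^2=\binom{\vec z}{\vec e_n}+\binom{W_1\vec z}{0}$ has $W_1\vec z$ in the wrong half; it should be $\binom{0}{W_1\vec z}$.)

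The same off-by-one recurs in your inductive step, where you assert that the column span of the bottom half of $JP^{2k}$ equals $\operatorname{span}(W_1,\dots,W_k)$ (correct — this is the second assertion) and then conclude that this span is all of $\mathbb C^n$ ``precisely when the point is not in $S_{2k}$'' (incorrect). $S_{2k}$ involves only $W_1,\dots,W_{k-1}$, since $W_k$ is the $K$-th factor and is excluded from the definition. More substantively, the span of the bottom half of $JP^{2k}$ is not the right quantity to test: after a row reduction one sees that $\operatorname{rank}JP^{2k}=n+\operatorname{rank}B$ where $B$ is the bottom block inherited from $JP^{2k-2}$, so full rank is governed by $\operatorname{span}(W_1,\dots,W_{k-1})$, not $\operatorname{span}(W_1,\dots,W_k)$. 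The second assertion is needed at the \emph{next} odd level, to identify the span of $B$ when one forms $JP^{2k+1}$. You flagged at the end that the $K$-th factor must be treated asymmetrically — that is exactly the point — but your base case and your full-rank criterion both apply the symmetric (wrong) version of the condition. Fixing the role of the last factor throughout is needed before the induction closes.
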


\begin{proof}
For $N=1$ the theorem is empty. $P^1=(z_{1,n1},\cdots ,z_{1,nn},0,\cdots, 0,1)$ and 
\[JP^1 = \begin{pmatrix} I_n \\ 0 \end{pmatrix} \] where we have removed all zero columns. For $N=2$ we have 
\[P^2 = \begin{pmatrix} I_n & 0 \\ W_1 & I_n \end{pmatrix} P^1 \]
This implies 
\[JP^2 = \begin{pmatrix} I_n & 0 \\ W_1 & I_n \end{pmatrix} \begin{pmatrix} I_n \\ 0 \end{pmatrix} \cup \begin{pmatrix} 0 \\ B_1 \end{pmatrix} = \begin{pmatrix} I_n \\ W_1 \end{pmatrix} \cup \begin{pmatrix} 0 \\ B_1 \end{pmatrix} \]
which has full rank if and only if $B_1$ has full rank. Since $P^1_i = z_{1,ni}$, by the discussion preceding the lemma, $B_1$ has full rank if and only if at least one $z_{1,ni}$ is nonzero.

If all $z_{1,ni}$ are zero, then $P^1=e_{2n}$ and $B_1=0$. Hence the statement about the span is trivially true. 

We now assume that the theorem is true for $N=2k$. We have
\begin{equation}\label{e:JacP2k+1} 
JP^{2k+1} = \begin{pmatrix} I_n & Z_{k+1} \\ 0 & I_n \end{pmatrix} JP^{2k}  \cup \begin{pmatrix} A_{k+1} \\ 0 \end{pmatrix} 
\end{equation}

If at least one of the previous last row variables is nonzero, then $JP^{2k}$ has full rank by the induction hypothesis and so does $JP^{2k+1}$. If not, then $P^{2k}=e_{2n}$ and $A_{k+1}=I_n$, after removing zero columns. If $JP^{2k}= \begin{pmatrix} A \\ B \end{pmatrix} $, then \[JP^{2k+1} = \begin{pmatrix} A+Z_{k+1}B & I_n \\ B & 0 \end{pmatrix} \] which has full rank if and only if $B$ has full rank. But the column span of $B$ equals the column span of $W_1, \cdots, W_k$. This proves the first part of the lemma for $N= 2k+1$.

If all the previous last row variables are zero, it also follows that \[P^{2k+1}=(z_{k+1,n1},\cdots,z_{k+1,nn},0,\cdots,0,1)^t. \] Finally
\begin{equation}\label{e:JacP2k+2} 
JP^{2k+2} = \begin{pmatrix} I_n & 0 \\ W_{k+1} & I_n \end{pmatrix} JP^{2k+1}  \cup \begin{pmatrix} 0 \\ B_{k+1} \end{pmatrix}  
\end{equation}
which has full rank if $JP^{2k+1}$ does. 

If not, then by the above all the previous last row variables are zero and 
\[JP^{2k+2} = \begin{pmatrix} A+Z_{k+1}B & I_ n \\ B+W_{k+1}(A+Z_{k+1}B) & W_{k+1} \end{pmatrix} JP^{2k+1}  \cup \begin{pmatrix} 0 \\ B_{k+1} \end{pmatrix}  \] which has full rank if and only if at least one $z_{k+1,ni}$ is nonzero by the discussion preceding the lemma. This proves the first part of the lemma for $N=2k+2$.

If all the $z_{k+1,ni}$ also are zero, then $P^{2k+1}=e_{2n}$ and so $P^{2k+2}=e_{2n}$. Also $B_{k+1}=0$ and since the columns of
$W_{k+1}(A+Z_{k+1}B)$ are linear combinations of the columns of $W_{k+1}$, the span of the bottom half of $JP^{2k+2}$ equals the span of the columns of $W_1,\cdots,W_{k+1}$ by the induction hypothesis. This completes the proof of the lemma.
\end{proof}

\section{The stratification} \label{s:stratification}
The goal in this section is to describe the stratification needed to understand that the submersion $ \pi_4 \circ \Psi_K \colon ( \mathbb{C}^3 )^K\setminus S_K \to \mathbb{C}^4\setminus \{0\} $ is a stratified elliptic submersion. Let \[ \vec{Z}_K=\begin{cases} (z_1, z_2, z_3, w_1, w_2,w_3,\dots, w_{3k-2},w_{3k-1},w_{3k}) \text{ if } K=2k \\ (z_1, z_2, z_3, w_1, w_2,w_3,\dots, z_{3k+1},z_{3k+2},z_{3k+3}) \text{ if } K=2k+1\end{cases} \] and 
\[ \pi_4 \circ \Psi_K (\vec{Z}_{K}) = \left(P^K_1 (\vec{Z}_{K}), P^K_2 (\vec{Z}_{K}), P^K_3 (\vec{Z}_{K}), P^K_4 (\vec{Z}_{K})\right).\] 
\begin{Rem} \label{r:inclusionPolyRing}
We will abuse notation in the following way in the paper. A polynomial not containing a variable can be interpreted as a polynomial of that variable. More precisely, let $K<L$. We have the projection $\pi\colon \mathbb{C}^L \to \mathbb{C}^K$, $\pi(x_1,\dots, x_L,\dots, x_K)=(x_1,\dots, x_L)$ and $\pi^*\colon \mathbb{C}[ \mathbb{C}^K ] \to \mathbb{C}[ \mathbb{C}^L ]$. For $p\in \mathbb{C}[\mathbb{C}^K]$ we still write $p$ instead of $\pi^*(p)$.  
\end{Rem}
We want to study the fibers 
\[\mathcal{F}^K_{(a_1,a_2,a_3,a_4)} = (\pi_4 \circ \Psi_K)^{-1}(a_1,a_2,a_3,a_4). \]
 Assume first that $ K = 2k+1 \ge 3 $ is odd. We see that 
\[
\pi_4 \circ \Psi_K (\vec{Z}_{K}) = \pi_4 \circ \Psi_{K-1} (\vec{Z}_{K-1}) \begin{pmatrix} 1 & 0 & 0 & 0 \\ 0 & 1 & 0 & 0 \\ z_{3k+1} & z_{3k+2} & 1 & 0 \\ z_{3k+2} & z_{3k+3} & 0 & 1 \end{pmatrix}
\] 
and we get 
\begin{equation*} \begin{aligned} 
P^K_1(\vec{Z}_{K} ) &= P^{K-1}_1(\vec{Z}_{K-1}) + z_{3k+1}  P^{K-1}_3(\vec{Z}_{K-1})+ z_{3k+2}  P^{K-1}_4(\vec{Z}_{K-1}) \\ 
P^K_2(\vec{Z}_{K} ) &= P^{K-1}_2(\vec{Z}_{K-1}) + z_{3k+2}  P^{K-1}_3(\vec{Z}_{K-1})+ z_{3k+3}  P^{K-1}_4(\vec{Z}_{K-1}) \\ 
P^K_3(\vec{Z}_{K} ) &= P^{K-1}_3(\vec{Z}_{K-1}) \\ 
P^K_4(\vec{Z}_{K} ) &= P^{K-1}_4(\vec{Z}_{K-1}).
\end{aligned} \end{equation*} 
We are led to the equations  
\begin{equation} \label{e:oddcase} \begin{aligned} 
a_1 &= P^K_1(\vec{Z}_{K} ) = P^{K-1}_1(\vec{Z}_{K-1}) + z_{3k+1}  P^{K-1}_3(\vec{Z}_{K-1})+ z_{3k+2}  P^{K-1}_4(\vec{Z}_{K-1}) \\ 
a_2 &= P^K_2(\vec{Z}_{K} ) = P^{K-1}_2(\vec{Z}_{K-1}) + z_{3k+2}  P^{K-1}_3(\vec{Z}_{K-1})+ z_{3k+3}  P^{K-1}_4(\vec{Z}_{K-1}) \\ 
a_3 &= P^K_3(\vec{Z}_{K} ) = P^{K-1}_3(\vec{Z}_{K-1}) \\ 
a_4 &= P^K_4(\vec{Z}_{K} ) = P^{K-1}_4(\vec{Z}_{K-1}).
\end{aligned} \end{equation}
Notice that these equations simplify to 
\[ \begin{aligned} 
a_1 &= P^{K-1}_1(\vec{Z}_{K-1}) + a_3 z_{3k+1} + a_4 z_{3k+2} \\ 
a_2 &= P^{K-1}_2(\vec{Z}_{K-1}) + a_3 z_{3k+2} + a_4 z_{3k+3}  \\ 
a_3 &= P^{K-1}_3(\vec{Z}_{K-1}) \\ 
a_4 &= P^{K-1}_4(\vec{Z}_{K-1}).
\end{aligned} \] 
If $ (a_3,a_4)\neq (0,0) $ then we can solve the two first equations for two of the three variables $ z_{3k+1},z_{3k+2},z_{3k+3} $ and we see that the fiber is a graph over $ \mathcal{G}^{K-1}_{(a_3,a_4)} \times \mathbb{C} $ where
\[ 
\mathcal{G}^{K-1}_{(a_3,a_4)} =\left\{\vec{Z}_{K-1}\in \mathbb{C}^{3K-3}; a_3 = P^{K-1}_3(\vec{Z}_{K-1}), 
a_4 = P^{K-1}_4(\vec{Z}_{K-1})\right\}. 
\]  
If $ (a_3,a_4)= (0,0) $ we get $\mathcal{F}^K_{(a_1,a_2,0,0)}=\mathcal{F}^{K-1}_{(a_1,a_2,0,0)} \times \mathbb{C}^3  $. We see that we get two main cases, namely $ (a_3,a_4)= (0,0) $ and $ (a_3,a_4) \neq (0,0) $. The last case will break into the two sub-cases, namely $ (a_3,a_4) \neq (0,1) $ and $ (a_3,a_4)= (0,1) $. We need these sub-cases because $ \mathcal{G}^{K-1}_{(0,1)} $ is not smooth. We list the strata below: \begin{itemize}  
\item The strata of generic fibers: When $ (a_3,a_4) \neq (0,0) $. The fibers are graphs over $ \mathcal{G}^{K-1}_{(a_3,a_4)} \times \mathbb{C} $. This set is divided into two strata as follows:
\begin{itemize}
\item   Smooth generic fibers:  When $ (a_3,a_4) \neq (0,1) $ then the fibers are smooth.
\item  Singular generic fibers: When $ (a_3,a_4) = (0,1) $ then the fibers are non-smooth.
\end{itemize} 
\item The stratum of non-generic fibers: When $ (a_3,a_4) = (0,0) $ the fibers are $\mathcal{F}^K_{(a_1,a_2,0,0)}=\mathcal{F}^{K-1}_{(a_1,a_2,0,0)} \times \mathbb{C}^3  $. Moreover the fibers are smooth.
 \end{itemize} 
We now analyse the case when $ K=2k \ge 3 $ is even. Now we have
\[
\pi_4 \circ \Psi_K (\vec{Z}_{K}) = \pi_4 \circ \Psi_{K-1} (\vec{Z}_{K-1}) \begin{pmatrix}  1 & 0 & w_{3k-2} & w_{3k-1}  \\ 0 & 1 & w_{3k-1} & w_{3k} \\ 0 & 0 & 1 & 0 \\ 0 & 0 & 0 & 1  \end{pmatrix}
\] 
and $ \mathcal{F}^K_{(a_1,a_2,a_3,a_4)} $ is the solution set to the equations 
\begin{equation} \label{e:evencase} \begin{aligned} 
a_1 & =  P^K_1(\vec{Z}_{K} ) = P^{K-1}_1(\vec{Z}_{K-1}) \\ 
a_2 & = P^K_2(\vec{Z}_{K} ) = P^{K-1}_2(\vec{Z}_{K-1})\\ 
a_3 & = P^K_3(\vec{Z}_{K} ) = P^{K-1}_3(\vec{Z}_{K-1})  + w_{3k-2}  P^{K-1}_1(\vec{Z}_{K-1})+ w_{3k-1}  P^{K-1}_2(\vec{Z}_{K-1})\\ 
a_4 & = P^K_4(\vec{Z}_{K} ) = P^{K-1}_4(\vec{Z}_{K-1})  + w_{3k-1}  P^{K-1}_1(\vec{Z}_{K-1})+ w_{3k}  P^{K-1}_2(\vec{Z}_{K-1}) .
\end{aligned} \end{equation} 
As in the previous case these equations simplify
\[ \begin{aligned} 
a_1 & = P^{K-1}_1(\vec{Z}_{K-1}) \\ 
a_2 & = P^{K-1}_2(\vec{Z}_{K-1})\\ 
a_3 & = P^{K-1}_3(\vec{Z}_{K-1})  + a_1 w_{3k-2} + a_2 w_{3k-1} \\ 
a_4 & = P^{K-1}_4(\vec{Z}_{K-1})  + a_1 w_{3k-1} + a_2 w_{3k}.
\end{aligned} \] 
Let
\[ 
\mathcal{H}^{K-1}_{(a_1,a_2)} =\left\{\vec{Z}_{K-1}\in \mathbb{C}^{3K-3}; a_1 = P^{K-1}_1(\vec{Z}_{K-1}), 
a_2 = P^{K-1}_2(\vec{Z}_{K-1})\right\}. 
\]  
A similar analysis as in the previous gives us the following strata:
\begin{itemize}  
\item The stratum of generic fibers: When $ (a_1,a_2) \neq (0,0) $. The fibers are graphs over $ \mathcal{H}^{K-1}_{(a_1,a_2)} \times \mathbb{C} $. Moreover the fibers are smooth. 
\item The strata of non-generic fibers: When $ (a_1,a_2) = (0,0) $ the fibers are $\mathcal{F}^K_{(0,0,a_3,a_4)}=\mathcal{F}^{K-1}_{(0,0,a_3,a_4)} \times \mathbb{C}^3  $. This set is divided into two strata as follows:
\begin{itemize}
\item   Smooth non-generic fibers:  When $ (a_3,a_4) \neq (0,1) $ then the fibers are smooth.
\item  Singular non-generic fibers: When $ (a_3,a_4) = (0,1) $ then the fibers are non-smooth.
\end{itemize} 
 \end{itemize} 

\section{Determination of complete vector fields}\label{s:descComplete}


The description of the fibers in Section \ref{s:stratification} leads us to study vector fields simultaneously tangent to the level sets $ \{P=c_1\}, \{Q=c_2\} $ of two functions $ P,Q\colon \mathbb{C}^N\to \mathbb{C} $. Such fields can be constructed in the following way. Pick three variables $ x,y,z $ from the variables $ x_1,\dots, x_N $ on $ \mathbb{C}^N $ and consider the vector fields \begin{equation}\label{e:fields} D_{xyz}(P,Q)=\det \begin{pmatrix}  {\partial}/{\partial x} & {\partial}/{\partial y} & {\partial}/{\partial z}\\ {\partial P}/{\partial x} & {\partial P}/{\partial y} & {\partial P}/{\partial z} \\ {\partial Q}/{\partial x} & {\partial Q}/{\partial y} & {\partial Q}/{\partial z}\end{pmatrix} \end{equation} which are simultaneously tangent to the level sets.  As mentioned in Section \ref{sprays} we want to use a finite collection of complete vector fields spanning tangent space at every point to prove (stratified) ellipticity. It is an easy exercise to show that the collection of these vector fields over all possible triples spans the tangent space at smooth points of  the variety $ \{P=c_1\} \cap \{Q=c_2\} $. It turns out that many of the vector fields we get by this method are complete but unfortunately not all of them. The complete vector fields from this collection will not span the tangent space at all points for all level sets. To overcome this difficulty and still producing  dominating sprays from  this collection of available complete fields is the main technical part of our paper explained in Section \ref{s:strategy}.

Now we will start to describe the complete vector fields tangent to the fibers of $ \pi_4\circ\Psi_K=(P_1^K,P_2^K, P_3^K, P_4^K) $ that we get using (\ref{e:fields}).
It will be convenient to group the variables as in Section \ref{technical1}, $Z_1, W_1,Z_2,W_2,\cdots $, where $$Z_k =  \begin{pmatrix} z_{3k-2} & z_{3k-1} \\ z_{3k-1} & z_{3k} \end{pmatrix} $$ and similarly for $W_k$. Since the variable $z_1$ never enters 
in $P^K$, we omit it from the first group $Z_1$. Note that $P^1=(z_2,z_3,0,1)^T$. We are
going to study the vector fields $$V_{ij}^K (x,y,z)= D_{xyz}(P_i^K,P_j^K) . $$ The 
$2 \times 2$ minors occuring as coefficients are denoted by $C_{ij}^K (\cdot,\cdot)$, i.e.
$$V_{ij}^K (x,y,z)= C_{ij}^K (y,z){\partial}/{\partial x} - C_{ij}^K (x,z){\partial}/{\partial y} + C_{ij}^K (x,y){\partial}/{\partial z}. $$

The description of the complete vector fields will be done inductively. We start with $ K=2 $. We have to study $ \mathcal{G}_{(a_3,a_4)}^2 $ or equivalently the equations \begin{equation} \begin{aligned} 
a_3 &= P_3^2(z_1,\dots,w_3)= z_2w_1+z_3w_2 \\
a_4 &= P_4^2(z_1,\dots,w_3)= 1+z_2w_2+z_3w_3.
\end{aligned}\end{equation}
We are interested in which triples $(x,y,z)$ of variables from the list $z_2, z_3, w_1, 
w_2, w_3$ give complete vector fields $V_{34}^2(x,y,z) $ and we denote the set of these triples by 
$\mathcal{T}_2$. By definition $\mathcal{T}_1 = \emptyset$. 
An easy computation gives that
\begin{equation}\label{e:T2}
\begin{aligned}\mathcal{T}_2 = \{&(w_1,w_2,w_3),(z_2,w_2,w_3),(z_3,w_1,w_2), \\ & (z_2,w_1,w_3),(z_3,w_1,w_3),(z_2,z_3,w_1),(z_2,z_3,w_3)\}.\end{aligned}
\end{equation}
For all the remaining noncomplete triples there is a variable such that the equation
is quadratic for that variable. We are now interested in determining at every stage
the triples of variables $(x,y,z)$ such that $V_{12}^{2k+1}(x,y,z) $ for $K=2k+1$ odd
is complete and $V_{34}^{2k+2}(x,y,z) $ for $K=2k+2$ even. We shall denote the set of
such triples by $\mathcal{T}_K$. The terms occuring in $P^K$ are of degree one in 
the occuring variables, hence the coefficients $C^K_{ij}$  are either of degree one or
two in the occuring variables. A triple giving a coefficient which is quadratic in
the integration variable  (for instance if $C_{ij}^K(y,z) $ is quadratic in the $x$ 
variable) will not be complete and we shall refer to such a triple as a quadratic
triple and the corresponding vector field as a quadratic vector field. The content of the next lemma is that all the remaining triples give 
complete vector fields. The variables that do not occur in a triple will have constant
solutions and are therefore treated as such in the proof.



\begin{Lem} \label{l:completetriple}
For $k\ge 1$ we have $\mathcal{T}_{2k}\subset \mathcal{T}_{2k+1} \subset \mathcal{T}_{2k+2}$. Moreover \begin{equation}\label{e:oddminuseven}\begin{aligned}
    &\mathcal{T}_{2k+1}  \setminus \mathcal{T}_{2k}=\{(z_{3k+1},z_{3k+2},z_{3k+3})\} \cup \phantom{i} \\ \phantom{i} &\cup \{(w_{3k-2},z_{3k+1},z_{3k+3}),(w_{3k-2},z_{3k+2},z_{3k+3})\} \cup \phantom{a}  \\  \phantom{a} & \cup \{(w_{3k},z_{3k+1},z_{3k+2}),(w_{3k},z_{3k+1},z_{3k+3})\} \cup \phantom{a} \\ \phantom{a} & \cup \{(a,b,z_{3k+1}),(a,b,z_{3k+3}); a \text{ and } b \text{ are from the same group}  \} \cup \phantom{a} \\
    \phantom{a} & \cup \{(a,b,z_{3k+1}); a \text{ the last variable of one group and } b \text{ the first of the next} \} \cup \phantom{a} \\
    \phantom{a} & \cup \{(a,b,z_{3k+3}); a \text{ the last variable of one group and } b \text{ the first of the next} \} \cup \phantom{a}\\
    \phantom{a} & \cup \{(a,b,z_{3k+1}); a \text{ the first variable of one group and } b \text{ the last of the next} \} \cup \phantom{a} \\
    \phantom{a} & \cup \{(a,b,z_{3k+3}); a \text{ the first variable of one group and } b \text{ the last of the next} \} 
    \end{aligned}
\end{equation} 
and 
\begin{equation}\label{e:evenminusodd} \begin{aligned}
    & \mathcal{T}_{2k+2}  \setminus \mathcal{T}_{2k+1}= \{(w_{3k+1},w_{3k+2},w_{3k+3})\} \cup \phantom{i} \\ \phantom{i} & \cup \{(z_{3k+1},w_{3k+1},w_{3k+3}),(z_{3k+1},w_{3k+2},w_{3k+3})\} \cup \phantom{i}  \\  \phantom{i} & \cup \{(z_{3k+3},w_{3k+1},w_{3k+2}),(z_{3k+3},w_{3k+1},w_{3k+3})\} \cup \phantom{i} \\ \phantom{i} & \cup \{(a,b,w_{3k+1}),(a,b,w_{3k+3}); a \text{ and } b \text{ are from the same group}  \} \cup \phantom{i} \\
    \phantom{i} & \cup \{(a,b,w_{3k+1}); a \text{ the last variable of one group and } b \text{ the first of the next} \} \cup \phantom{i} \\
    \phantom{i} & \cup \{(a,b,w_{3k+3}); a \text{ the last variable of one group and } b \text{ the first of the next} \} \cup \phantom{i}\\
    \phantom{i} & \cup \{(a,b,w_{3k+1}); a \text{ the first variable of one group and } b \text{ the last of the next} \} \cup \phantom{i} \\
    \phantom{i} & \cup \{(a,b,w_{3k+3}); a \text{ the first variable of one group and } b \text{ the last of the next} \}. 
    \end{aligned}
\end{equation} In combination with (\ref{e:T2}) this gives us a complete description of the sets $\mathcal{T}_L$, $L\ge 2$.
\end{Lem}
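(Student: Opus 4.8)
The plan is to prove Lemma \ref{l:completetriple} by induction on $K$, treating the odd and even steps in parallel (the even step being completely symmetric to the odd one after interchanging the roles of $z$'s and $w$'s). So I will focus on the passage from $\mathcal{T}_{2k}$ to $\mathcal{T}_{2k+1}$. The three variables $z_{3k+1}, z_{3k+2}, z_{3k+3}$ are the newly introduced ones, and from the recursion (\ref{e:oddcase}) we have $P^{2k+1}_3 = P^{2k}_3$, $P^{2k+1}_4 = P^{2k}_4$, while $P^{2k+1}_1$ and $P^{2k+1}_2$ pick up the linear terms $z_{3k+1}P^{2k}_3 + z_{3k+2}P^{2k}_4$ and $z_{3k+2}P^{2k}_3 + z_{3k+3}P^{2k}_4$. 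The key structural fact is that, since $P^{2k}_3$ and $P^{2k}_4$ do not involve any of the new variables, differentiating $P^{2k+1}_1$ or $P^{2k+1}_2$ with respect to a new variable returns (a copy of) $P^{2k}_3$ or $P^{2k}_4$, which are \emph{constant} along the fiber (they equal $a_3$, $a_4$). This is what converts would-be quadratic coefficients into linear (indeed affine in the integration variable) ones.

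The first step is to establish the inclusion $\mathcal{T}_{2k}\subset \mathcal{T}_{2k+1}$. A triple $(x,y,z)$ lying in $\mathcal{T}_{2k}$ consists of old variables only; I need to check that $V^{2k+1}_{12}(x,y,z)$ is still complete. Here I use that on the relevant stratum the fiber of $\pi_4\circ\Psi_{2k}$ factors (by the computation in Section \ref{s:stratification}) so that $P^{2k}_3, P^{2k}_4$ restrict to constants $a_3,a_4$; then $P^{2k+1}_1, P^{2k+1}_2$ restricted to the fiber differ from $P^{2k}_1, P^{2k}_2$ only by affine-linear functions of the new (frozen, for this triple) variables, so the coefficients $C^{2k+1}_{12}$ evaluated at these three old variables coincide up to constants with $C^{2k}_{12}$; hence completeness is inherited. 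The second step is to show that every triple listed on the right side of (\ref{e:oddminuseven}) gives a complete field: in each case one identifies a variable (the "integration variable", the last slot $z$ in my notation convention) such that the corresponding coefficient $C^{2k+1}_{12}$ is \emph{affine}, i.e. of degree at most one, in that variable, with the degree-one part having coefficient not involving that variable either — this is precisely the "shear"/"overshear" situation that integrates to a complete flow (a polynomial flow). I would verify the degree count case by case using the explicit form of $P^{2k+1}_1, P^{2k+1}_2$: the new variables $z_{3k+1}, z_{3k+3}$ appear \emph{only linearly and only once} (in $P_1$ resp. $P_2$), multiplied by $P^{2k}_3$ resp. $P^{2k}_4$; this is why triples containing $z_{3k+1}$ or $z_{3k+3}$ in the integration slot are so abundant in the list, whereas $z_{3k+2}$ appears in both $P_1$ and $P_2$ and is therefore only admissible together with the other two new variables (the first bullet of (\ref{e:oddminuseven})).

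The third step is the reverse: any triple of variables \emph{not} on the list (other than those already in $\mathcal{T}_{2k}$) is quadratic in every choice of integration variable, hence the method produces no complete field. For this I would argue, as the text indicates in the paragraph preceding the lemma, that such a triple yields a coefficient $C^{2k+1}_{ij}(\cdot,\cdot)$ which is genuinely quadratic in the remaining variable — the obstruction being that two "old" variables from a single $Z_\ell$ or $W_\ell$ block (with $\ell\le k$) both feed multiplicatively into $P^{2k}_1$ or $P^{2k}_2$, producing a $w\cdot z$ or similar cross term of degree two that cannot be removed. I would make this precise by recording the bilinear structure of $P^{2k}_1, P^{2k}_2$ in the block variables (each is a sum of monomials that are products of one entry from consecutive blocks), so that "quadratic in the integration variable" becomes a combinatorial condition on which two blocks the non-integration variables come from.

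The main obstacle I anticipate is \emph{bookkeeping}, not conceptual depth: the list (\ref{e:oddminuseven}) has many clauses distinguishing "same group", "last of one group and first of the next", "first of one group and last of the next", and one must show these are exactly the non-quadratic configurations and no others. Getting the induction to feed this cleanly requires an explicit enough description of the monomials of $P^K_1,\dots,P^K_4$ in terms of the grouped variables $Z_1,W_1,Z_2,\dots$ — effectively a combinatorial formula for the last row of the product, read off from (\ref{e:P2k+1})–(\ref{e:P2k+2}) — and then a patient case check that the degree in each candidate integration variable is $\le 1$ precisely for the listed triples. The completeness of the affine (shear-type) fields themselves is standard (they are polynomial, hence complete), so once the degree bookkeeping is done the analytic content is immediate; I would state the shear-completeness as a small separate observation and then do the enumeration.
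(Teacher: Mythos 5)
Your overall plan (induction on $K$, treating odd and even steps symmetrically, showing that listed triples are complete and unlisted triples are quadratic) matches the structure of the paper's proof, but the key argument in your first step is incorrect and would not close the gap.

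The inclusion $\mathcal{T}_{2k}\subset\mathcal{T}_{2k+1}$ is not a statement about a vector field restricted to a fiber; it is a statement about the field $V_{12}^{2k+1}(x,y,z)$ on all of $(\mathbb{C}^3)^{2k+1}$. For a triple $(x,y,z)$ of old variables, the flow of $V_{12}^{2k+1}(x,y,z)$ moves only the coordinates $x,y,z$ and freezes everything else; but $P_3^{2k}$ and $P_4^{2k}$ generically depend on $x,y,z$ and are \emph{not} first integrals of $V_{12}^{2k+1}$ (only $P_1^{2k+1},P_2^{2k+1}$ are). So one cannot substitute $P_3^{2k}=a_3$, $P_4^{2k}=a_4$ along the flow, and the claim that ``the coefficients $C^{2k+1}_{12}$ coincide up to constants with $C^{2k}_{12}$'' is false. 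There is a second confusion here: membership in $\mathcal{T}_{2k}$ means $V_{34}^{2k}(x,y,z)$ is complete, so any inheritance argument must compare $C^{2k+1}_{12}$ against $C^{2k}_{34}$, not $C^{2k}_{12}$. The paper's actual argument is a careful monomial count: it derives the decomposition
\begin{equation*}
V_{12}^{2k+1}=V_{12}^{2k}+z_{3k+1}V_{32}^{2k}-z_{3k+2}V_{24}^{2k}-z_{3k+2}V_{31}^{2k}+z_{3k+3}V_{14}^{2k}+(z_{3k+1}z_{3k+3}-z_{3k+2}^2)V_{34}^{2k},
\end{equation*}
observes that the multipliers $z_{3k+1},z_{3k+2},z_{3k+3}$ are frozen for such a triple, and then uses the divisibility fact that every monomial of $P_1^{2k}$ or $P_2^{2k}$ divides a monomial of $P_3^{2k}$ and of $P_4^{2k}$ to show that the first five correction terms contribute no monomials (in the moving variables) beyond those already present in $V_{34}^{2k}$. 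That degree bound, not a fiber restriction, is what preserves completeness. You do not derive or use this decomposition, and your substitution idea cannot be patched to yield it.

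Two further issues: (i) ``they are polynomial, hence complete'' is not a correct principle — $x^2\partial_x$ is polynomial and not complete; the correct device, and the one the paper uses repeatedly, is that if the coefficient in a given direction is affine in the integration variable and the remaining variables have already been solved (with polynomial/entire dependence), the resulting scalar ODE has an entire solution. (ii) Your Steps 2 and 3 defer almost all of the work (``case by case'', ``patient bookkeeping''). This is precisely where the paper invests its effort: for triples containing one new variable and two old ones it builds a ``missing-variable matrix'' recording, block by block, which old variables drop out of $\partial P_1^{2k+1}/\partial(\cdot)$ and $\partial P_2^{2k+1}/\partial(\cdot)$, and reads off from it exactly which pairings $(a,b)$ give a non-quadratic coefficient — this is how the ``same group / last-of-one-and-first-of-next / first-of-one-and-last-of-next'' clauses arise and why no other cross-group pair is allowed. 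Without that mechanism, your enumeration has no way to verify that the list is complete in both directions.
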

\begin{proof}
The result is true for $\mathcal{T}_2$. The first group is interpreted as $\{ z_2,z_3 \}$ 
and $z_{3k+1}$ must be replaced by $z_2$. The missing triplet are precisely the quadratic
triples. 

We shall prove (\ref{e:oddminuseven}), the proof of (\ref{e:evenminusodd}) being identical.
There is a lot of symmetry in the proof and we will not repeat arguments already given
in a situation symmetric to a proven statement. We first consider triples $(x,y,z)$ not
containing any variables from the new group $Z_{k+1}$, i.e. $z_{3k+1},z_{3k+2}$ and 
$z_{3k+3}$. It then follows from (\ref{e:P2k+1}) (omitting variables for shorter notation) 
that:
\begin{equation}\label{e:V12} 
V_{12}^{2k+1}= V_{12}^{2k}+z_{3k+1}V_{32}^{2k}-z_{3k+2}V_{24}^{2k}-z_{3k+2}V_{31}^{2k}
+z_{3k+3}V_{14}^{2k}+(z_{3k+1}z_{3k+3}-z_{3k+2}^2)V_{34}^{2k}.
\end{equation}
A quadratic triple will still be quadratic since $V_{34}^{2k}$ is. For a triple in
$\mathcal{T}_{2k}$, notice that in all the first 5 terms the $V_{ij}^{2k}$ is obtained
by replacing one or two of the functions $P_3^{2k}$ and $P_4^{2k}$ by $P_1^{2k}$ 
and/or $P_2^{2k}$. By (\ref{e:P2k+2}) all of the terms occuring in $P_1^{2k}$
or $P_2^{2k}$ divide  a term occuring in $P_3^{2k}$ and also a term occuring in
$P_4^{2k}$. This means that all terms occuring in the 5 first vector fields above are 
already present in $V_{34}^{2k}$ and completeness is not destroyed. We also notice 
that for any pair $x,y$ of previous variables, the coefficient $C_{12}^{2k+1}(x,y)$ will
also satisfy (\ref{e:V12}).

We next consider triples containing some of the new variables $z_{3k+1},z_{3k+2}$ and 
$z_{3k+3}$. The Jacobian matrix is now given by (\ref{e:JacP2k+1}) where  
\begin{equation}\label{e:Ak+1} 
A^{k+1} = \begin{pmatrix} P_3^{2k} & P_4^{2k} & 0 \\ 0 & P_3^{2k} & P_4^{2k} \end{pmatrix}. 
\end{equation}

If the triple contains all three variables, then
$$ V_{12}^{2k+1}(z_{k+1},z_{k+2},z_{k+3}) = (P_4^{2k})^2 \partial / \partial z_{3k+1}
- (P_3^{2k})(P_4^{2k}) \partial / \partial z_{3k+2} + (P_3^{2k})^2
\partial / \partial z_{3k+3}
$$
and the coefficients do not contain any of the $Z_{k+1}$ variables, hence this is complete.
(The solutions are just affine functions.)

We now consider the case of two new variables. The first possibility is $(x,z_{3k+1},z_{3k+2})$. The coefficient of $\partial / \partial x$ is $(P_3^{2k})^2$. 
Since $P_3^{2k}$ contains all previous variables except $w_{3k}$, this is quadratic in
all those variables and $x=w_{3k}$ is the only possibility. The solution for $w_{3k}$
is affine. The coefficient of $\partial / \partial z_{3k+2}$ is now
$$
-(\frac{\partial P_2^{2k}}{\partial w_{3k}}+z_{k+3}\frac{\partial P_4^{2k}}{\partial w_{3k}})
$$
which is just a constant and the solution is again affine. Finally the coefficient of
$\partial / \partial z_{3k+1}$ is given by
$$
\frac{\partial P_1^{2k}}{\partial w_{3k}}+z_{k+2}\frac{\partial P_4^{2k}}{\partial w_{3k}}
$$
which is an affine function and the solution is entire. Hence this field is complete.

The precise same logic applies to the triple $(x,z_{3k+2},z_{3k+3})$ except now 
$w_{3k-2}$ is the only missing variable (now in $P_4^{2k}$). 

The final possibility of two new variables is the triple $(x,z_{3k+1},z_{3k+3})$.
The coefficient of $\partial / \partial x$ is now $P_3^{2k} P_4^{2k}$ which is of 
degree one in $w_{3k-2}$ and $w_{3k}$ and quadratic in all other previous variables.
We consider the case of $x=w_{3k-2}$, the case $x=w_{3k}$ being identical. The 
coefficient is an affine function of $w_{3k-2}$, hence the solution is entire.
The coefficient of $\partial / \partial z_{3k+1}$ is $-z_{3k+1}P_1^{2k-1}P_4^{2k}$
which is just a linear function of $z_{3k+1}$ and the solution is entire. 
The coefficient of $\partial / \partial z_{3k+3}$ is $-z_{3k+2}P_2^{2k-1}P_4^{2k}$
which is just a constant and the solution is affine. 

We finally consider the case of one new variable and two previous variables $x,y$. It
follows that $C_{12}^{2k+1}(x,y)$ satifies equation (\ref{e:V12}) hence is quadratic
in $z_{3k+2}$, so this cannot be the new variable. In order to investigate $z_{3k+1}$
and $z_{3k+3}$ we need to understand which variables are involved in the coefficients.
To do this we look at each previous group of variables $Z_j$ and $W_j$ for $1 \le j \le k$
and see which variables are involved in the first two rows of the Jacobian with
respect to these variables at level $2k+1$. For a $Z_j$ group we need to consider
the matrix
$$
\begin{pmatrix} \frac{\partial P_1^{2k+1}}{\partial z_{3j-2}} 
& \frac{\partial P_1^{2k+1}}{\partial z_{3j-1}} 
& \frac{\partial P_1^{2k+1}}{\partial z_{3j}} 
\\ \frac{\partial P_2^{2k+1}}{\partial z_{3j-2}} 
& \frac{\partial P_2^{2k+1}}{\partial z_{3j-1}} 
& \frac{\partial P_2^{2k+1}}{\partial z_{3j}} \end{pmatrix}
$$
and the same for a $W_j$ group. The $Z_1$ group only consists of $z_2$ and $z_3$.
The $Z_j$ variables do not occur in the above matrix. There is a simple 
formula for the above matrix which follows from (\ref{e:JacP2k+1}) and (\ref{e:JacP2k+2}).
The matrix is the first two rows of the matrix ($I=I_2$) :
$$
\begin{pmatrix} I & Z_{k+1} \\ 0 & I \end{pmatrix} \cdots
\begin{pmatrix} I & 0 \\ W_j & I \end{pmatrix}
\begin{pmatrix} A_j \\ 0  \end{pmatrix}
$$
and this formula makes it easy to track which variables are missing at each step,
in addition to the $Z_j$ variables. We arrive at the following matrix of missing
variables
$$
\begin{pmatrix}
w_{3j-3},w_{3j},z_{3k+3} & z_{3k+3} & w_{3j-5},w_{3j-2},z_{3k+3} \\
w_{3j-3},w_{3j},z_{3k+1} & z_{3k+1} & w_{3j-5},w_{3j-2},z_{3k+1}
\end{pmatrix}
$$
In the case $j=1$ the missing variable matrix is
$$
\begin{pmatrix}
w_3,z_{3k+3} & w_1,z_{3k+3} \\
w_3,z_{3k+1} & w_1,z_{3k+1}
\end{pmatrix}.
$$

We now consider a $W_j$ group. Again the $W_j$ variables do not enter. We
now have to consider the two first rows of the matrix
$$
\begin{pmatrix} I & Z_{k+1} \\ 0 & I \end{pmatrix} \cdots
\begin{pmatrix} I & Z_{j+1} \\ 0 & I \end{pmatrix}
\begin{pmatrix} 0 \\ B_j  \end{pmatrix}
$$
and this leads to the following missing variables matrix for $j<k$:
$$
\begin{pmatrix}
z_{3j},z_{3j+3},z_{3k+3} & z_{3k+3} & z_{3j-2},z_{3j+1},z_{3k+3} \\
z_{3j},z_{3j+3},z_{3k+1} & z_{3k+1} & z_{3j-2},z_{3j+1},z_{3k+1}
\end{pmatrix}.
$$
For $j=1$ we replace $z_{3j-2}$ by $z_2$. For $j=k$ the middle entries in the upper
left and the lower right corners are replaced by $z_{3k+2}$.

We first investigate triples $(x,y,z_{3k+1})$, where $x$ and $y$ are not from 
$Z_{k+1}$. If $x$ and $y$ are from the same group, then since $z_{3k+1}$ occurs
in every entry in the second row of the missing variable matrix, 
$C_{12}^{2k+1}(x,z_{3k+1})$ and $C_{12}^{2k+1}(y,z_{3k+1})$ do not depend on any
of the variables $x,y,z_{3k+1}$ hence $x$ and $y$ are both affine functions.
$C_{12}^{2k+1}(x,y)$ does not depend on $x,y$ and is of degree one in $z_{3k+1}$,
hence the solution is entire.

Now assume that $x$ and $y$ are from different groups. If $x$ is not a missing
variable in $\frac{\partial P_2^{2k+1}}{\partial y}$, then $y$ is not a missing
variable in $\frac{\partial P_2^{2k+1}}{\partial x}$. $x$ and $y$ are not both
$w_{3k}$, let's say $x$. Then 
$$
C_{12}^{2k+1}(y,z_{3k+1}) = -(\frac{\partial P_2^{2k+1}}{\partial y})P_3^{2k}
$$
is quadratic in $x$ and the field is not complete. Hence $x$ and $y$ must both 
appear in the second row of the missing variable matrix of each other. 

We now look at possibilities for $x$ and $y$. Assume first that $x$ is in $Z_j$
group with $1 < j \le k$. There are now four possibilities ; $x=z_{3j-2}$ in which 
case $y= w_{3j-3}$ or $y=w_{3j}$ or $x=z_{3j}$ in which case $y=w_{3j-5}$ or 
$y=w_{3j-2}$. We consider the first case. Then
$$
C_{12}^{2k+1}(w_{3j-3},z_{3j-2})= \frac{\partial P_1^{2k+1}}{\partial w_{3j-3}}
\frac{\partial P_2^{2k+1}}{\partial z_{3j-2}}-\frac{\partial P_2^{2k+1}}{\partial w_{3j-3}}
\frac{\partial P_1^{2k+1}}{\partial z_{3j-2}}
$$
and from the missing variable matrix we see that this does not depend on $z_{3j-2}$
and $w_{3j-3}$ and is of degree one in $z_{3k+1}$, hence we have an entire solution
for $z_{3k+1}$.
We also have
$$
C_{12}^{2k+1}(w_{3j-3},z_{3k+1})=
- \frac{\partial P_2^{2k+1}}{\partial w_{3j-3}}P_3^{2k}
$$
$$
C_{12}^{2k+1}(z_{3j-2},z_{3k+1})=
- \frac{\partial P_2^{2k+1}}{\partial z_{3j-2}}P_3^{2k}.
$$
The partial derivatives on the right hand sides do not depend on any of the
variables in the triple, hence are just constants. It also follows from the
missing variable matrix that $P_3^{2k}$ does not contain the product of 
$z_{3j-2}$ and $w_{3j-3}$, hence the equations for these two variables is a
linear system with constant coefficients. This has an entire solution.
The three other cases all have similar structure and have entire solutions.
In the case $j=1$, we either have $x=z_2$ and $y=w_3$ or $x=z_3$  and $y=w_1$
and the discussion is the same. It also follows from the missing variable
matrix that $x$ and $y$ cannot come from different $W$ groups. This proves
the result in the case of picking $z_{3k+1}$ from the last group. The proof
in case of picking $z_{3k+3}$ from the last group is completely symmetric.
This provides the final detail in the proof.
\end{proof}

In order to produce complete fields that are also tangential to fibers of the submersion we introduce the following notation and terminology.

\begin{Def}
    Let $\Xi_3=\mathcal{T}_{2}$. For $K\ge 4$ let \[\Xi_K=\mathcal{T}_{K-1}\setminus \mathcal{T}_{K-2}.\] We say that the triples in $\Xi_K$ are {\it introduced on level} $K$.   
\end{Def}

We will now use these complete fields to produce complete fields which are tangential to the fibers $ \mathcal{F}_{(a_1,a_2,a_3,a_4)}^K $. Here we will use triples introduced on level $K$ to produce complete tangential fields.   

First consider the case $ K=2k+1\ge 3 $ odd.

If $ \mathbf{a_3\neq 0} $ we use (\ref{e:oddcase}) to get \[z_{3k+2}=\frac{1}{a_3}\left( a_2-P_{2}^{2k}(\vec{Z}_{2k})-a_4z_{3k+3}\right)\] and  \begin{equation*}\begin{aligned}z_{3k+1}&=\frac{1}{a_3}\left( a_1-P_{1}^{2k}(\vec{Z}_{2k})-a_4z_{3k+2}\right)= \\ &=\frac{1}{a_3}\left( a_1-P_{1}^{2k}(\vec{Z}_{2k})-\frac{a_4}{a_3}\left( a_2-P_{2}^{2k}(\vec{Z}_{2k})-z_{3k+3}P_{4}^{2k}(\vec{Z}_{2k})\right)\right)= \\ &=\frac{1}{a_3^2}\left( a_1a_3-a_3P_{1}^{2k}(\vec{Z}_{2k})-a_2a_4+a_4P_{2}^{2k}(\vec{Z}_{2k})+a_4^2z_{3k+3}\right). \end{aligned}\end{equation*} Using this we define a biholomorphism \[ \alpha\colon \mathcal{G}_{(a_3,a_4)}^{2k}\times \mathbb{C}_{z_{3k+3}} \to \mathcal{F}_{(a_1,a_2,a_3,a_4)}^K\] On \[ \mathcal{G}_{(a_3,a_4)}^{2k}\times \mathbb{C}_{z_{3k+3}}\] we have the complete fields $ \partial_{x_1x_2x_3}^{2k} $ for $ x_1,x_2,x_3 $ in $ \Xi_{2k+1} $ and also the complete field $ \partial / \partial z_{3k+3} $. Using the biholomorphism $ \alpha $ we get complete fields on $ \mathcal{F}_{(a_1,a_2,a_3,a_4)}^K $ for $ a_3 \neq 0 $ of the form \begin{equation} \begin{aligned}\theta_{x_1x_2x_3}^{2k+1,*}&=\partial_{x_1x_2x_3}^{2k}+\frac{1}{a_3}\partial_{x_1x_2x_3}^{2k}\left(P_{2}^{2k}(\vec{Z}_{2k})\right)\frac{\partial}{\partial z_{3k+2}}+\\ &+\frac{1}{a_3^2}\partial_{x_1x_2x_3}^{2k}\left(a_3P_{1}^{2k}(\vec{Z}_{2k})-a_4P_{2}^{2k}(\vec{Z}_{2k})\right) \frac{\partial}{\partial z_{3k+1}} \end{aligned}\end{equation} and \begin{equation} \gamma^{2k+1,*}=\frac{\partial}{\partial z_{3k+3}}+\frac{a_4^2}{a_3^2}\frac{\partial}{\partial z_{3k+1}}-\frac{a_4}{a_3}\frac{\partial}{\partial z_{3k+2}} \end{equation} Since $ P_3^{2k} = a_3$ and $ P_4^{2k} = a_4$ on the fiber we get meromorphic fields on $ (\mathbb{C}^3)^K $ \begin{equation} \begin{aligned}\theta_{x_1x_2x_3}^{2k+1,*}&=\partial_{x_1x_2x_3}^{2k}+\frac{\partial_{x_1x_2x_3}^{2k}\left(P_{2}^{2k}(\vec{Z}_{2k})\right)}{P_{3}^{2k}(\vec{Z}_{2k})}\frac{\partial}{\partial z_{3k+2}}+\\ &+\left(\frac{\partial_{x_1x_2x_3}^{2k}\left(P_{1}^{2k}(\vec{Z}_{2k})\right)}{P_{3}^{2k}(\vec{Z}_{2k})}-\frac{P_{4}^{2k}(\vec{Z}_{2k})\partial_{x_1x_2x_3}^{2k}\left(P_{2}^{2k}(\vec{Z}_{2k})\right)}{P_{3}^{2k}(\vec{Z}_{2k})^2}\right) \frac{\partial}{\partial z_{3k+1}} \end{aligned}\end{equation} and \begin{equation} \gamma^{2k+1,*}=\frac{\partial}{\partial z_{3k+3}}+\frac{P_{4}^{2k}(\vec{Z}_{2k})^2}{P_{3}^{2k}(\vec{Z}_{2k})^2}\frac{\partial}{\partial z_{3k+1}}-\frac{P_{4}^{2k}(\vec{Z}_{2k})}{P_{3}^{2k}(\vec{Z}_{2k})}\frac{\partial}{\partial z_{3k+2}} \end{equation} (abusing notation) with poles on $ P_3^{2k} = 0$. Since $ P_3^{2k} $ is in the kernel of these fields we can multiply the fields by $ (P_3^{2k})^2 $ and get complete fields that are globally defined on ($\mathbb{C}^3)^K $ and preserve the fibers of $ \pi_4\circ \Psi_K $ below \begin{equation} \label{e:thetafieldodd} \begin{aligned}\theta_{x_1x_2x_3}^{2k+1}&=P_3^{2k}(\vec{Z}_{2k})^2\theta_{x_1x_2x_3}^{2k+1,*}=P_3^{2k}(\vec{Z}_{2k})^2\partial_{x_1x_2x_3}^{2k}+\\&+P_3^{2k}(\vec{Z}_{2k})\partial_{x_1x_2x_3}^{2k}\left(P_{2}^{2k}(\vec{Z}_{2k})\right)\frac{\partial}{\partial z_{3k+2}}+\\ &+\Big[P_3^{2k}(\vec{Z}_{2k})\partial_{x_1x_2x_3}^{2k}\left(P_{1}^{2k}(\vec{Z}_{2k})\right)-\\&-P_{4}^{2k}(\vec{Z}_{2k})\partial_{x_1x_2x_3}^{2k}\left(P_{2}^{2k}(\vec{Z}_{2k})\right)\Big] \frac{\partial}{\partial z_{3k+1}} \end{aligned} \end{equation} for $ x_1,x_2,x_3 \in \Xi_{2k+1} $ and the field \begin{equation}\label{e:gammaodd} \begin{aligned}
\gamma^{2k+1}&=P_3^{2k}(\vec{Z}_{2k})^2\gamma^{2k+1,*}=P_3^{2k}(\vec{Z}_{2k})^2\frac{\partial}{\partial z_{3k+3}}+\\&+P_{4}^{2k}(\vec{Z}_{2k})^2\frac{\partial}{\partial z_{3k+1}}-P_3^{2k}(\vec{Z}_{2k})P_{4}^{2k}(\vec{Z}_{2k})\frac{\partial}{\partial z_{3k+2}}.\end{aligned}\end{equation}

If $ \mathbf{a_4\neq 0} $ we can define a biholomorphism \[ \beta\colon \mathcal{G}_{(a_3,a_4)}^{2k}\times \mathbb{C}_{z_{3k+1}} \to \mathcal{F}_{(a_1,a_2,a_3,a_4)}^K \] using (\ref{e:oddcase}) and \[z_{3k+2}=\frac{1}{a_4}\left( a_1-P_{1}^{2k}(\vec{Z}_{2k})-a_3z_{3k+1}\right)\] and  \begin{equation*}\begin{aligned}z_{3k+3}&=\frac{1}{a_4}\left( a_2-P_{2}^{2k}(\vec{Z}_{2k})-a_3z_{3k+2}\right)= \\ &=\frac{1}{a_4}\left( a_2-P_{2}^{2k}(\vec{Z}_{2k})-\frac{a_3}{a_4}\left( a_1-P_{1}^{2k}(\vec{Z}_{2k})-a_3z_{3k+1}\right)\right)= \\ &=\frac{1}{a_4^2}\left( a_2a_4-a_4P_{2}^{2k}(\vec{Z}_{2k})-a_1a_3+a_3P_{1}^{2k}(\vec{Z}_{2k})+a_3^2z_{3k+1}\right). \end{aligned}\end{equation*} On \[\mathcal{G}_{(a_3,a_4)}^{2k}\times \mathbb{C}_{z_{3k+1}}\] we have the complete fields $ \partial_{x_1x_2x_3}^{2k} $ for $ x_1,x_2,x_3 $ in $ \Xi_{2k+1} $ and $ \partial / \partial z_{3k+1} $. Proceeding as above we get the complete fields \begin{equation} \label{e:phifieldodd}  \begin{aligned}\phi_{x_1x_2x_3}^{2k+1}&=P_4^{2k}(\vec{Z}_{2k})^2\partial_{x_1x_2x_3}^{2k}+\\&+P_4^{2k}(\vec{Z}_{2k})\partial_{x_1x_2x_3}^{2k}\left(P_{1}^{2k}(\vec{Z}_{2k})\right)\frac{\partial}{\partial z_{3k+2}}+\\ &+\Big[P_4^{2k}(\vec{Z}_{2k})\partial_{x_1x_2x_3}^{2k}\left(P_{2}^{2k}(\vec{Z}_{2k})\right)-\\&-P_{3}^{2k}(\vec{Z}_{2k})\partial_{x_1x_2x_3}^{2k}\left(P_{1}^{2k}(\vec{Z}_{2k})\right)\Big] \frac{\partial}{\partial z_{3k+3}} \end{aligned} \end{equation} for $ x_1,x_2,x_3 \in \Psi_{2k+1}$. The field $\gamma^{2k+1}$ is the same as in the case $a_3\neq 0$.

For the case $ K=2k\ge 3 $ even the analogous procedure leads to the following complete fields on  ($\mathbb{C}^3)^K $ tangent to the fibers of $ \pi_4\circ \Psi_K $:
\begin{equation} \label{e:thetaevenfield}
\begin{aligned}\theta_{x_1x_2x_3}^{2k}&=P_{1}^{2k-1}(\vec{Z}_{2k-1})^2\partial_{x_1x_2x_3}^{2k-1}+\\&+P_{1}^{2k-1}(\vec{Z}_{2k-1})\partial_{x_1x_2x_3}^{2k-1}\left(P_{4}^{2k-1}(\vec{Z}_{2k-1})\right)\frac{\partial}{\partial w_{3k-1}}+\\ &+\Big[P_{1}^{2k-1}(\vec{Z}_{2k-1})\partial_{x_1x_2x_3}^{2k-1}\left(P_{3}^{2k-1}(\vec{Z}_{2k-1})\right)-\\&-P_{2}^{2k-1}(\vec{Z}_{2k-1})\partial_{x_1x_2x_3}^{2k-1}\left(P_{4}^{2k-1}(\vec{Z}_{2k-1})\right)\Big] \frac{\partial}{\partial w_{3k-2}} \end{aligned}\end{equation}
for $x_1,x_2,x_3 \in \Xi_{2k}$,
\begin{equation} \label{e:phievenfield}
\begin{aligned}\phi_{x_1x_2x_3}^{2k}&=P_2^{2k-1}(\vec{Z}_{2k-1})^2\partial_{x_1x_2x_3}^{2k-1}+\\&+P_2^{2k-1}(\vec{Z}_{2k-1})\partial_{x_1x_2x_3}^{2k-1}\left(P_{3}^{2k-1}(\vec{Z}_{2k-1})\right)\frac{\partial}{\partial w_{3k-1}}+\\ &+\Big[P_2^{2k-1}(\vec{Z}_{2k-1})\partial_{x_1x_2x_3}^{2k-1}\left(P_{4}^{2k-1}(\vec{Z}_{2k-1})\right)- \\ & -P_{1}^{2k-1}(\vec{Z}_{2k-1})\partial_{x_1x_2x_3}^{2k-1}\left(P_{3}^{2k-1}(\vec{Z}_{2k-1})\right)\Big] \frac{\partial}{\partial w_{3k}} \end{aligned} 
\end{equation}
for $x_1,x_2,x_3 \in \Xi_{2k}$ and
\begin{equation} \label{e:gammaeven} \begin{aligned}
\gamma^{2k}=P_1^{2k-1}(\vec{Z}_{2k-1})^2\frac{\partial}{\partial w_{3k}}&+P_{2}^{2k-1}(\vec{Z}_{2k-1})^2\frac{\partial}{\partial w_{3k-2}}- \\ &-P_1^{2k-1}(\vec{Z}_{2k-1})P_{2}^{2k-1}(\vec{Z}_{2k-1})\frac{\partial}{\partial w_{3k-1}}. \end{aligned}
\end{equation} 

\begin{Rem} \label{r:levelFields}
It follows from the inductive formulas (\ref{e:oddcase}) and  (\ref{e:evencase}) that $ \theta_{x_1x_2x_3}^{K} $, $ \phi_{x_1x_2x_3}^{K} $ and $ \gamma^K $, considered as vector fields on $ (\mathbb{C}^3)^L $, are tangent to the fibers $ \mathcal{F}_{(a_1,a_2,a_3a_4)}^L $ for $ L\ge K $. In other words, the fields associated with triples introduced on level $K$ are tangential to all fibers $\mathcal{F}^L$ for $L \ge K$.
\end{Rem}

\section{Strategy of proof of stratified ellipticity}\label{s:strategy}

We outline the strategy for proving that the submersion is a stratified elliptic submersion. We have seen that the fibers are given by four polynomial equations. We have also seen that these four equations can be reduced to two equations. We then use the exact form of these two equations to find $\Xi_K$ so that $\partial_{x_1x_2x_3}^K$ are complete vector fields exactly when $x_1,x_2,x_3 \in \Xi_{K}$. This leads us to the globally defined complete vector fields $\theta_{x_1x_2x_3}^K$, $\phi_{x_1x_2x_3}^K$ and $\gamma^K$ described in Section \ref{s:descComplete}. Find a big (a complement of an analytic subset) "good" set on the fibers where the collection of these vector fields spans the tangent space of the fiber. For points outside the good set find a complete field $V$ whose orbit through the point intersects the good set. At points along the orbit that are also in the good set the collection of complete vector fields above spans. Now pull back the collection of vector fields by suitable flow automorphisms of $V$ and add these fields to the collection (see Definition \ref{d:generated}). This enlarged collection of complete vector fields spans in a bigger set thus enlarging the good set. Continue this enlarging of the collection of vector fields until it spans the tangent space at every point of every fiber in the stratum. To accomplish this strategy we need the following technical results.

\begin{Lem} \label{l:finitecollection}
Let $M$ be a Stein manifold, $N_0 \subset N \subset M$ analytic subvarieties. Given a finite collection $\theta_1, \ldots, \theta_k$ of complete holomorphic vector fields on $M$ which span the tangent space $T_x M$ at all points
$x\in M\setminus N$ and given another  complete holomorphic vector field $\phi$ on $M$ (whose flow we denote by $\alpha_t \in \operatorname{Aut}_{hol} (M)$, $t\in \mathbb{C}$) with the property that the orbit through points of $N\setminus N_0$
is leaving $N$, i.e.  $\{ \alpha_t (x) : t \in \mathbb{C} \} \not\subset N$ $\forall x \in N\setminus N_0$. Then there are finitely many times $t_i \in \mathbb{C}$ $i=1,\ldots, l$ such that $\{ \alpha_{t_i} (x) \}_{i=1}^l \not\subset N$ $\forall x\in N\setminus N_0$. In particular the finite collection $ \{ \alpha_{t_i}^\star (\theta_m) \}_{i=1, m=1}^{l,k} $ of complete holomorphic vector fields on $M$ is spanning the tangent space $T_x M$ at all points
$x\in M\setminus N_0$.
\end{Lem}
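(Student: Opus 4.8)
The plan is to rephrase the first assertion as the stabilization of a descending chain of analytic sets, settle that with a Noetherian argument, and then read off the spanning statement. For $t\in\mathbb C$ put $\mathcal N_t:=N\cap\alpha_t^{-1}(N)$. Since $\alpha_t\in\operatorname{Aut}_{hol}(M)$, each $\mathcal N_t$ is a closed analytic subvariety of $M$ contained in $N$, and $\mathcal N_0=N$. A point $x$ lies in $\mathcal N_t$ exactly when $x\in N$ and $\alpha_t(x)\in N$, so $x\in\bigcap_{t\in\mathbb C}\mathcal N_t$ exactly when the whole orbit $\{\alpha_t(x):t\in\mathbb C\}$ is contained in $N$. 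Hence the hypothesis that the orbit through each point of $N\setminus N_0$ leaves $N$ says precisely that $\bigcap_{t\in\mathbb C}\mathcal N_t\subseteq N_0$, and the first assertion is that this already holds for a finite subintersection.

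I would extract the times greedily. Start with $T=\{0\}$, so $N_T:=\bigcap_{t\in T}\mathcal N_t=N$. As long as $N_T\not\subseteq N_0$, the inclusion $\bigcap_{t\in\mathbb C}\mathcal N_t\subseteq N_0$ forces $N_T\not\subseteq\mathcal N_\tau$ for some $\tau\in\mathbb C$ (otherwise $N_T\subseteq\bigcap_t\mathcal N_t\subseteq N_0$); adjoining such a $\tau$ to $T$ strictly shrinks $N_T$. This produces a strictly descending chain of analytic subvarieties of $M$, all contained in $N$, which must terminate, say at $T=\{t_1,\dots,t_l\}$ with $t_1=0$. Termination means $\bigcap_{i=1}^l\mathcal N_{t_i}\subseteq N_0$, i.e. for every $x\in N\setminus N_0$ there is an index $i$ with $x\notin\mathcal N_{t_i}$, equivalently with $\alpha_{t_i}(x)\notin N$. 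This is the first assertion.

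For the spanning statement: each $\alpha_{t_i}^\star\theta_m$ is again a complete holomorphic vector field (the pullback of a complete field by an automorphism has flow conjugate to the original one, hence is complete), and since $t_1=0$ with $\alpha_0^\star\theta_m=\theta_m$, the listed collection $\{\alpha_{t_i}^\star\theta_m\}_{i,m}$ contains the $\theta_m$ themselves. Given $x\in M\setminus N_0$: if $x\notin N$ then already $\theta_1(x),\dots,\theta_k(x)$ span $T_xM$; if $x\in N\setminus N_0$ choose $i$ with $y:=\alpha_{t_i}(x)\notin N$, so $\theta_1(y),\dots,\theta_k(y)$ span $T_yM$, and applying the linear isomorphism $(D\alpha_{t_i})_x^{-1}\colon T_yM\to T_xM$ shows $(\alpha_{t_i}^\star\theta_1)(x),\dots,(\alpha_{t_i}^\star\theta_k)(x)$ span $T_xM$. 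In every case the finite collection spans $T_xM$.

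The step I expect to be the main obstacle is the termination of the strictly descending chain of the successive sets $N_T$: arbitrary descending chains of closed analytic subvarieties of a Stein manifold need not stabilize, so one must use the special nature of the $\mathcal N_t$ occurring here. In the situation in which the lemma is applied, $M$ is a Zariski-open subset of a complex vector space, $N$ and $N_0$ are algebraic subvarieties, and the complete polynomial vector fields in play have polynomial (hence algebraic) flows, so every $\mathcal N_t$ is an algebraic subvariety and the chain stabilizes by Noetherianity of the coordinate ring. I would either invoke this directly or record the algebraicity as a standing hypothesis; everything else above is formal.
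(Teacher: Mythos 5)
Your reformulation of the hypothesis as $\bigcap_{t\in\mathbb C}\mathcal N_t\subseteq N_0$ with $\mathcal N_t=N\cap\alpha_t^{-1}(N)$ is clean and correct, and you rightly identify where the real difficulty lies: the greedy chain of proper analytic subsets need not terminate, since strictly descending chains of analytic subvarieties of a Stein manifold can be infinite (for instance $\{n,n+1,\dots\}\subset\mathbb C$ for $n=0,1,2,\dots$). Choosing an arbitrary $\tau$ that kills \emph{some} component can simply peel off components one at a time forever. So the gap you flag is genuine, and without filling it the argument is incomplete.

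Your proposed repair — that in the application the flows are algebraic, so each $\mathcal N_t$ is algebraic and Noetherianity finishes — does not work here. The paper explicitly states (Section \ref{s:exponential}) that although the initial vector fields from Section \ref{s:descComplete} are polynomial, ``their flows are not always algebraic (not all of them are locally nilpotent).'' Thus the sets $\mathcal N_t=N\cap\alpha_t^{-1}(N)$ are genuinely only analytic, not algebraic, and Noetherianity is unavailable; moreover the lemma is being invoked repeatedly in a setting where $N$ itself is only an analytic subset of a fiber, so building algebraicity in as a standing hypothesis would break the downstream applications. What you need instead is a mechanism that forces the \emph{dimension} to drop at each step, not merely to pass to a proper subset.

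The paper's proof supplies exactly that mechanism and is genuinely different from your chain construction. The set $N$ has at most countably many irreducible components; let $B_i$ be those not contained in $N_0$ and let $a_0$ be the maximal dimension among them. For each $B_i$ pick a point $x_i\in B_i\setminus N_0$. Because $t\mapsto\alpha_t(x_i)$ is a nonconstant holomorphic map whose image is not contained in $N$, the bad-time set $A_i=\{t:\alpha_t(x_i)\in N\}$ is discrete; a countable union of discrete subsets of $\mathbb C$ is still a proper (indeed meagre) subset, so there is a single time $t_1$ outside every $A_i$. Then $N_1=\{y\in N:\alpha_{t_1}(y)\in N\}$ meets each $B_i$ in a proper analytic subset (it misses $x_i$), so every irreducible component of $N_1$ that is not contained in $N_0$ lies inside some $B_i$ and has dimension $<a_0$. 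This strictly decreases the invariant $a_j$, so the induction terminates after finitely many steps. The crucial idea your proposal is missing is the simultaneous choice of one $t$ that works for representatives of \emph{all} top-dimensional components at once, via the discreteness/Baire-category argument; that is what converts ``proper subset'' into ``strictly lower dimension.'' Once the finitely many $t_i$ are found, the final spanning step in your write-up (pulling back by the flow automorphisms and using $(D\alpha_{t_i})_x$ to transport a spanning set) is correct and essentially the paper's own conclusion.
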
   

\begin{proof} The analytic subset $N$ has at most countably many components. Denote by  $B_i$ those components which are not entirely contained in $N_0$.
Define $a_0$ to be the maximal dimension of them.
Choose a point $x_i$ from each of those $B_i$. 
 For every $i$ the set $A_i:=  \{ t\in \mathbb{C} : \alpha_t  (x_i) \in N\}$ is  discrete. 
Since a countable union of discrete sets is meagre in $\mathbb{C}$, we find $t_1 \notin A_i \ \forall i$.
Denote by  $\tilde B_i$ those components of the  analytic subset $N_1 := \{ y\in N :  \alpha_{t_1} (y) \in N \}$ which are not entirely contained in $N_0$ and define $a_1$ to be the maximal dimension of them.
By construction $a_1 < a_0$. 
Choose a point $\tilde x_i$ from each of those $\tilde B_i$. 
 For every $i$ the set  $\tilde A_i:= \{ t\in \mathbb{C} : \alpha_t  (\tilde x_i) \in N\} $ is  discrete. 
Since a countable union of discrete sets is meagre in $\mathbb{C}$, we find $t_2 \notin \tilde A_i  \ \forall i$.

 Let $a_2$ be the maximal dimension of those components of the  analytic subset $N_2 := \{ y\in N :  \alpha_{t_1} (y) \in N \  \text{and} \ \alpha_{t_2} (y) \in N \}$ which are not entirely contained in $N_0$. By construction 
 $a_2 < a_1$ and continuing the construction after finitely steps we reach our conclusion.
\end{proof}
The next Lemma is a generalized and parametrized version of the previous one. It is adapted to 
the stratified spray situation. Namely, we have to produce sprays 
not on a single fiber but in a neighborhood of the fiber in each stratum (see Definition \ref{definstratifiedspray}). In fact in our case it will be on the whole stratum. The following definitions are straightforward.

\begin{Def} Let $\pi: X \to Y$ is a holomorphic map between complex manifolds and denote ${\rm d}\pi : TX \to TY$ the tangent map. We call a holomorphic vector field  $\theta$ on $X$ {\sl fiber preserving} if $d\pi (\theta) =0$. 
\end{Def}
\begin{Def} A subset $N$ of a complex manifold $M$
is called invariant with respect to a collection 
of vector fields on $M$ if for each of the vector fields holds: For each starting point 
$x\in N$ the local flow of the field (which is defined in a neighborhood of $0$) remains contained in $N$.
\end{Def}

\begin{Lem} Given a submersion $\pi: M \to Y $ with connected fibers $M_y:= \pi^{-1} (y)$ and given a finite collection  of complete fiber preserving holomorphic vector fields on $M$ such that in each fiber $M_y$ there is a point $x\in M_y$ where they span the tangent space $T_x M_y$. Moreover there is no analytic subset $N$ of $M$ contained in a fiber $M_y$ which is invariant under the flows of $\theta_1, \ldots, \theta_k$. Then a finite subset of the set
$\Gamma (\theta_1, \ldots, \theta_k)$ is spanning $T_x M_{\pi (x)}$ for all $x\in M$.
\end{Lem}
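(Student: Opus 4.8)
The statement is a relative (parametrized) version of the previous Lemma~\ref{l:finitecollection}: instead of a single ambient Stein manifold with an exceptional analytic set $N$, we have a submersion $\pi\colon M\to Y$ whose fibers $M_y$ are connected, a finite collection of complete fiber preserving fields $\theta_1,\dots,\theta_k$ spanning at one point of each fiber, and the hypothesis that no fiberwise analytic set is invariant under all the $\theta_j$. The plan is to run the same dimension-reduction argument as in Lemma~\ref{l:finitecollection}, but fiberwise and simultaneously over all $y\in Y$. Since the fields are fiber preserving, their flows $\alpha_j^t$ map each fiber $M_y$ to itself, so every auxiliary analytic set produced in the argument will again be a union of sets contained in fibers, and the construction stays inside the relative setting.

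First I would introduce the "bad set'' $N\subset M$ consisting of those points $x$ at which $\theta_1(x),\dots,\theta_k(x)$ fail to span $T_xM_{\pi(x)}$; this is an analytic subset of $M$ (vanishing of appropriate minors of the matrix of the $\theta_j$ in local fiber coordinates) and by hypothesis $N\cap M_y\neq M_y$ for every $y$, so $N$ meets each fiber in a proper analytic subset. Next I would observe that $N$ is not invariant under the collective flow: if it were, it would contain, through any of its points, a whole fiberwise orbit, and taking the analytic-Zariski closure of such an orbit inside the (proper analytic) fiber $N\cap M_y$ would yield a fiberwise analytic set invariant under all $\theta_j$, contradicting the hypothesis. (More carefully: one passes to an irreducible component $B$ of $N$ of maximal fiber-dimension that is not all of $M$, and notes that if all flows preserved $N$ they would have to permute or preserve such maximal components, leading again to an invariant fiberwise analytic set.) Then, exactly as in Lemma~\ref{l:finitecollection}, I would pick one of the $\theta_j$ whose flow moves generic points of $N$ off $N$, choose countably many base points (one per component of $N$ not contained in the next stratum), use that a countable union of discrete subsets of $\mathbb{C}$ is meagre to select a time $t_1$, and pass to the strictly-smaller-dimensional analytic set $N_1=\{x\in N:\alpha^{t_1}(x)\in N\}$. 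Iterating finitely many times — using each of the finitely many $\theta_j$ in turn, and at each round invoking the no-invariant-subset hypothesis to guarantee the dimension actually drops — exhausts $N$, producing finitely many automorphisms $\beta_1,\dots,\beta_L$ (compositions of the chosen flows) such that for every $x\in M$ at least one $\beta_i(x)$ lies outside $N$. Pulling back the $\theta_j$ by the $\beta_i$ gives a finite subcollection of $\Gamma(\theta_1,\dots,\theta_k)$ that spans $T_xM_{\pi(x)}$ at every $x\in M$, because $(\beta_i)_*$ carries the spanning frame at $\beta_i(x)$ to a spanning frame at $x$ and respects fibers (the $\beta_i$ are fiber preserving, being built from fiber preserving flows).

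\textbf{Main obstacle.} The delicate point is the fiberwise/relative bookkeeping of the dimension induction. In the absolute Lemma~\ref{l:finitecollection} one had an explicit second analytic set $N_0$ playing the role of "stratum boundary'' and the orbit-leaving hypothesis was imposed directly; here one must manufacture the analogue of that hypothesis from the abstract "no fiberwise invariant analytic subset'' assumption, and one must make sure that the countably many components of the auxiliary analytic sets $N_1,N_2,\dots$ are still unions of sets lying in single fibers (so that "fiber dimension'' is the right quantity to induct on and the process terminates). The cleanest way to handle this is to work in a local holomorphic trivialization $\pi^{-1}(U)\cong U\times V$, where fiber preserving fields are genuinely $y$-families of vector fields on $V$ and everything reduces to Lemma~\ref{l:finitecollection} applied with parameters, and then to note that finitely many such local conclusions can be patched because the output is a \emph{finite} collection of global automorphisms. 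A secondary, routine, point is checking that the pulled-back fields $(\beta_i)_*\theta_j$ are again complete (clear, since $\beta_i\in\operatorname{Aut}_{hol}(M)$) and still fiber preserving (clear, since $\beta_i$ preserves fibers).
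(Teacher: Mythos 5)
Your proposal matches the paper's proof in all essential respects: you define the bad set $N$ where the fields fail to span the fiber tangent space, observe that $N\cap M_y$ is proper analytic in each fiber, invoke the no-invariant-fiberwise-analytic-subset hypothesis to guarantee that some flow leaves $N$ from any of its points, run the same "countably many components / meagre union of discrete time sets" argument from Lemma~\ref{l:finitecollection} to select finitely many times, and induct on fiber dimension using pullbacks of the $\theta_j$ by the resulting fiber-preserving automorphisms from $\Gamma(\theta_1,\ldots,\theta_k)$. The extra remarks you make about orbit closures and local trivializations are clarifications of points the paper leaves terse, not a different argument.
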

\begin{proof} 
Let $N\subset M$ be the set of points $x$ where $span \{(\theta_1, \ldots, \theta_k) \} \ne T_x M_{\pi (x)}$. 
by assumption $N \cup M_y$ is a proper analytic subset of $M_y$ for each $y \in Y$. Since there is no invariant 
analytic subset different from the fibers for each $x_0 \in N$ there is a field $\theta_i$ whose flow starting in
$x_0$ will leave $N$, i.e. go through points where $(\theta_1, \ldots, \theta_k)$ span $T_x M_{\pi (x)}$ Now choose (at most countably many) points, one from each component of $N$ and as in the proof of the proceeding lemma
find finitely many times $t_i$ such that enlarging the collection $\theta_1, \ldots, \theta_k$ by the pullbacks $(\alpha_i (t_i))^* (\theta_m)  i,m = 1, \ldots , k$ we get a new finite collection of complete fields where the set of points where this new collection does not span
the tangent space of the $\pi$-fiber has smaller dimension. By finite induction on the dimension we get the desired result.
\end{proof}

\section{Auxilliary quantities and results} \label{s:helpful}

Define \begin{equation} \renewcommand*{\arraystretch}{1.5} \mathcal{M}_{x_1x_2x_3}^K = \begin{pmatrix} \partial P_1^K/\partial x_1 & \partial P_1^K/\partial x_2 & \partial P_1^K/\partial x_3 \\ \partial P_2^K/\partial x_1 & \partial P_2^K/\partial x_2 & \partial P_2^K/\partial x_3 \\ \partial P_3^K/\partial x_1 & \partial P_3^K/\partial x_2 & \partial P_3^K/\partial x_3 \\ \partial P_4^K/\partial x_1 & \partial P_4^K/\partial x_2 & \partial P_4^K/\partial x_3\end{pmatrix} \end{equation} for any triple $ x_1,x_2,x_3 $ from $ \vec{Z}_K $. Removing the $j$-th row from $\mathcal{M}_{x_1x_2x_3}^{K}$ gives us  $(3\times 3)$-matrices which we denote by $\mathcal{M}_{x_1x_2x_3}^{K,j}$. Let $\mathcal{R}_{x_1x_2x_3}^{K,j} = \det \mathcal{M}_{x_1x_2x_3}^{K,j}$. The significance of the functions $\mathcal{R}_{x_1x_2x_3}^{K,j}$ is understood if one notices, because of (\ref{e:fields}), 
that \begin{equation}\label{e:ROdd1}  \mathcal{R}_{x_1x_2x_3}^{2k+1,1}=\partial_{x_1x_2x_3}^{2k}P_2^{2k},\end{equation} \begin{equation}\label{e:ROdd2} \mathcal{R}_{x_1x_2x_3}^{2k+1,2}=\partial_{x_1x_2x_3}^{2k}P_1^{2k} \end{equation} and that \begin{equation} \label{e:REven3} \mathcal{R}_{x_1x_2x_3}^{2k,3}=\partial_{x_1x_2x_3}^{2k-1}P_4^{2k-1},\end{equation} \begin{equation} \label{e:REven4} \mathcal{R}_{x_1x_2x_3}^{2k,4}=\partial_{x_1x_2x_3}^{2k-1}P_3^{2k-1}. \end{equation} From (\ref{e:oddcase}) and (\ref{e:evencase}) we get the relations \begin{equation}\label{e:indRoddtoeven}\renewcommand*{\arraystretch}{1.5} \begin{pmatrix} \mathcal{R}_{x_1x_2x_3}^{2k+1,1} \\ \mathcal{R}_{x_1x_2x_3}^{2k+1,2} \\ \mathcal{R}_{x_1x_2x_3}^{2k+1,3} \\ \mathcal{R}_{x_1x_2x_3}^{2k+1,4}\end{pmatrix} = \begin{pmatrix} 1 & 0 & 0 & 0 \\ 0 & 1 & 0 & 0 \\ -z_{3k+1} & z_{3k+2} & 1 & 0 \\ z_{3k+2} & -z_{3k+3} & 0 & 1 \end{pmatrix} \begin{pmatrix} \mathcal{R}_{x_1x_2x_3}^{2k,1} \\ \mathcal{R}_{x_1x_2x_3}^{2k,2} \\ \mathcal{R}_{x_1x_2x_3}^{2k,3} \\ \mathcal{R}_{x_1x_2x_3}^{2k,4}\end{pmatrix} \end{equation} and \begin{equation} \label{e:indReventoodd}\renewcommand*{\arraystretch}{1.5} \begin{pmatrix} \mathcal{R}_{x_1x_2x_3}^{2k,1} \\ \mathcal{R}_{x_1x_2x_3}^{2k,2} \\ \mathcal{R}_{x_1x_2x_3}^{2k,3} \\ \mathcal{R}_{x_1x_2x_3}^{2k,4}\end{pmatrix} = \begin{pmatrix} 1 & 0 & -w_{3k-2} & w_{3k-1} \\ 0 & 1 & w_{3k-1} & -w_{3k} \\ 0 & 0 & 1 & 0 \\ 0 & 0 & 0 & 1 \end{pmatrix} \begin{pmatrix} \mathcal{R}_{x_1x_2x_3}^{2k-1,1} \\ \mathcal{R}_{x_1x_2x_3}^{2k-1,2} \\ \mathcal{R}_{x_1x_2x_3}^{2k-1,3} \\ \mathcal{R}_{x_1x_2x_3}^{2k-1,4}\end{pmatrix} .\end{equation}

Consider the vector fields $\theta_{x_1x_2x_3}^L$ and $\phi_{x_1x_2x_3}^L$ where $(x_1,x_2,x_3)\in \mathcal{T}_L$ and $3 \le L \le K$. Rewriting (\ref{e:thetafieldodd}), (\ref{e:phifieldodd}), (\ref{e:thetaevenfield}) and (\ref{e:phievenfield}) using these functions we get \begin{equation}\label{e:thetafieldoddformulasWithR} \begin{aligned}\theta_{x_1x_2x_3}^{2k+1}&=P_3^{2k}(\vec{Z}_{2k})^2\partial_{x_1x_2x_3}^{2k}+P_3^{2k}(\vec{Z}_{2k})\mathcal{R}_{x_1x_2x_3}^{2k+1,1}(\vec{Z}_{2k})\frac{\partial}{\partial z_{3k+2}}+\\ &+\left(P_3^{2k}(\vec{Z}_{2k})\mathcal{R}_{x_1x_2x_3}^{2k+1,2}(\vec{Z}_{2k})-P_{4}^{2k}(\vec{Z}_{2k})\mathcal{R}_{x_1x_2x_3}^{2k+1,1}(\vec{Z}_{2k})\right) \frac{\partial}{\partial z_{3k+1}},  
\end{aligned}
\end{equation}
\begin{equation}\label{e:phifieldoddformulasWithR}
\begin{aligned}
\phi_{x_1x_2x_3}^{2k+1}&=P_4^{2k}(\vec{Z}_{2k})^2\partial_{x_1x_2x_3}^{2k}+P_4^{2k}(\vec{Z}_{2k})\mathcal{R}_{x_1x_2x_3}^{2k+1,2}(\vec{Z}_{2k})\frac{\partial}{\partial z_{3k+2}}+\\ &+\left(P_4^{2k}(\vec{Z}_{2k})\mathcal{R}_{x_1x_2x_3}^{2k+1,1}(\vec{Z}_{2k})-P_{3}^{2k}(\vec{Z}_{2k})\mathcal{R}_{x_1x_2x_3}^{2k+1,2}(\vec{Z}_{2k})\right) \frac{\partial}{\partial z_{3k+3}} ,
\end{aligned}
\end{equation}
\begin{equation}\label{e:thetafieldevenformulasWithR}
\begin{aligned}\theta_{x_1x_2x_3}^{2k}&=P_{1}^{2k-1}(\vec{Z}_{2k-1})^2\partial_{x_1x_2x_3}^{2k-1}+P_{1}^{2k-1}(\vec{Z}_{2k-1})\mathcal{R}_{x_1x_2x_3}^{2k,3}(\vec{Z}_{2k-1})\frac{\partial}{\partial w_{3k-1}}+\\ &+\left(P_{1}^{2k-1}(\vec{Z}_{2k-1})\mathcal{R}_{x_1x_2x_3}^{2k,4}(\vec{Z}_{2k-1})-P_{2}^{2k-1}(\vec{Z}_{2k-1})\mathcal{R}_{x_1x_2x_3}^{2k,3}(\vec{Z}_{2k-1})\right) \frac{\partial}{\partial w_{3k-2}} 
\end{aligned}
\end{equation}
and 
\begin{equation}\label{e:phifieldevenformulasWithR}
\begin{aligned}
\phi_{x_1x_2x_3}^{2k}&=P_2^{2k-1}(\vec{Z}_{2k-1})^2\partial_{x_1x_2x_3}^{2k-1}+P_2^{2k-1}(\vec{Z}_{2k-1})\mathcal{R}_{x_1x_2x_3}^{2k,4}(\vec{Z}_{2k-1})\frac{\partial}{\partial w_{3k-1}}+\\ &+\left(P_2^{2k-1}(\vec{Z}_{2k-1})\mathcal{R}_{x_1x_2x_3}^{2k,3}(\vec{Z}_{2k-1})-P_{1}^{2k-1}(\vec{Z}_{2k-1})\mathcal{R}_{x_1x_2x_3}^{2k,4}(\vec{Z}_{2k-1})\right) \frac{\partial}{\partial w_{3k}} \end{aligned} \end{equation} We see that half of the functions $\mathcal{R}_{x_1x_2x_3}^{K,j}$ occur in the coefficients of the last three directions. As already observed the fields $\theta_{x_1x_2x_3}^L$ and $\phi_{x_1x_2x_3}^L$ for $L<K$ have   components zero along the last three directions. We have to make sure that the projection onto the last three variables of the collection of fields $\theta_{x_1x_2x_3}^K$ and $\phi_{x_1x_2x_3}^K$ spans a three-dimensional space. If this is true for a point we will say that the fields span all new direction in the point. In order to determine if our fields span all new directions in a point $\vec{Z}_K\in \mathcal{F}_{a_1a_2a_3a_4}^K$ we will use the following. Let $N_K=|\mathcal{T}_K|$ be the number of complete triples. Define the $(2\times N_{K-1})$-matrices \[  \Omega_{x_1x_2x_3}^K(\vec{Z}_K) = \begin{cases} \renewcommand*{\arraystretch}{1.5}\begin{pmatrix} \mathcal{R}_{x_1x_2x_3}^{K-1,1}(\vec{Z}_{K}) & \cdots \\ \mathcal{R}_{x_1x_2x_3}^{K-1,2}(\vec{Z}_{K}) & \cdots  \end{pmatrix} \text{ when $K$ odd,}\\ \\  \renewcommand*{\arraystretch}{1.5}\begin{pmatrix} \mathcal{R}_{x_1x_2x_3}^{K-1,3}(\vec{Z}_{K}) & \cdots \\ \mathcal{R}_{x_1x_2x_3}^{K-1,4}(\vec{Z}_{K}) & \cdots  \end{pmatrix} \text{ when $K$ even,}\end{cases}  
\] where $(x_1,x_2,x_3)$ run over all triples in $\mathcal{T}_{K-1}$. Using the formulas (\ref{e:thetafieldoddformulasWithR}), (\ref{e:phifieldoddformulasWithR}), (\ref{e:thetafieldevenformulasWithR}), (\ref{e:phifieldevenformulasWithR}), and remembering that a fiber $\mathcal{F}_{(a_1,a_2,a_3,a_4)}^K$ is called generic if $(a_1,a_2)\neq (0,0)$ when $K$ is even and if $(a_3,a_4)\neq (0,0)$ when $K$ is odd  it is an exercise in linear algebra to prove the lemma below.

\begin{Lem} \label{l:spanNewDirections}
If in a point $\vec{Z}_K\in \mathcal{F}_{a_1a_2a_3a_4}^K$ in a generic fiber \[ \operatorname{Rank}\Omega_{x_1x_2x_3}^K(\vec{Z}_K)=2 \] then \[ \left\{\theta_{x_1x_2x_3}^{K};(x_1,x_2,x_3)\in \mathcal{T}_{K-1} \right\}\cup \left\{\phi_{x_1x_2x_3}^{K};(x_1,x_2,x_3)\in \mathcal{T}_{K-1} \right\} \cup \left\{\gamma^{K}\right\} \] span all three new directions. If  \[ \operatorname{Rank}\Omega_{x_1x_2x_3}^K(\vec{Z}_K)=1 \] then \[ \left\{\theta_{x_1x_2x_3}^{K};(x_1,x_2,x_3)\in \mathcal{T}_{K-1} \right\}\cup \left\{\phi_{x_1x_2x_3}^{K};(x_1,x_2,x_3)\in \mathcal{T}_{K-1} \right\} \cup \left\{\gamma^{K}\right\} \] span two out of three new directions. 
\end{Lem}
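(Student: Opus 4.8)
The plan is to reduce the assertion to a short linear–algebra computation with three explicit vectors in $\mathbb{C}^{3}$. I will carry out the odd case $K=2k+1$ in detail; the even case $K=2k$ is word for word the same after the substitutions $P_1^{2k-1},P_2^{2k-1}$ for $P_3^{2k},P_4^{2k}$, the quantities $\mathcal{R}^{K-1,3}_\tau,\mathcal{R}^{K-1,4}_\tau$ for $\mathcal{R}^{K-1,1}_\tau,\mathcal{R}^{K-1,2}_\tau$, the new variables $w_{3k-2},w_{3k-1},w_{3k}$ for $z_{3k+1},z_{3k+2},z_{3k+3}$, and $\gamma^{2k}$ for $\gamma^{2k+1}$. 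Write $p=P_3^{2k}(\vec{Z}_K)$ and $q=P_4^{2k}(\vec{Z}_K)$; since on $\mathcal{F}^K_{(a_1,a_2,a_3,a_4)}$ one has $p=a_3$ and $q=a_4$, the genericity hypothesis is exactly $(p,q)\neq(0,0)$. For a triple $\tau=(x_1,x_2,x_3)\in\mathcal{T}_{K-1}$ set $r_\tau=\mathcal{R}^{K-1,1}_\tau(\vec{Z}_K)$ and $s_\tau=\mathcal{R}^{K-1,2}_\tau(\vec{Z}_K)$; by (\ref{e:ROdd1}), (\ref{e:ROdd2}) and (\ref{e:indRoddtoeven}) these equal $\mathcal{R}^{K,1}_\tau$ and $\mathcal{R}^{K,2}_\tau$, and the columns of $\Omega_{x_1x_2x_3}^{K}(\vec{Z}_K)$ are precisely the vectors $(r_\tau,s_\tau)^{T}$.

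The next step is to read off, from (\ref{e:thetafieldoddformulasWithR}), (\ref{e:phifieldoddformulasWithR}) and (\ref{e:gammaodd}), the components of $\theta_{x_1x_2x_3}^{K}$, $\phi_{x_1x_2x_3}^{K}$ and $\gamma^{K}$ along the three new directions $\partial/\partial z_{3k+1},\partial/\partial z_{3k+2},\partial/\partial z_{3k+3}$. One finds that they are, respectively,
\[
\Theta\begin{pmatrix}r_\tau\\ s_\tau\end{pmatrix},\qquad
\Phi\begin{pmatrix}r_\tau\\ s_\tau\end{pmatrix},\qquad
g:=\begin{pmatrix}q^{2}\\ -pq\\ p^{2}\end{pmatrix},\qquad
\Theta=\begin{pmatrix}-q&p\\ p&0\\ 0&0\end{pmatrix},\quad
\Phi=\begin{pmatrix}0&0\\ 0&q\\ q&-p\end{pmatrix}.
\]
Consequently the span of the new–direction parts of the whole collection is $\Theta(V)+\Phi(V)+\mathbb{C}g$, where $V\subseteq\mathbb{C}^{2}$ is the column space of $\Omega_{x_1x_2x_3}^{K}(\vec{Z}_K)$, so that $\dim V=\operatorname{Rank}\Omega_{x_1x_2x_3}^{K}(\vec{Z}_K)$.

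For the rank-$2$ case one has $V=\mathbb{C}^{2}$ and argues by cases on which of $p,q$ vanishes. If $p\neq0$ then $\Theta$ has rank $2$ and $\Theta(\mathbb{C}^{2})=\{v\in\mathbb{C}^{3}:v_3=0\}$; if moreover $q\neq0$ then likewise $\Phi(\mathbb{C}^{2})=\{v:v_1=0\}$, while if $q=0$ then $\Phi(\mathbb{C}^{2})$ is the $v_3$–axis — in either subcase $\Theta(\mathbb{C}^{2})+\Phi(\mathbb{C}^{2})=\mathbb{C}^{3}$. The remaining possibility $p=0$, $q\neq0$ is symmetric, with $\Theta(\mathbb{C}^{2})$ the $v_1$–axis and $\Phi(\mathbb{C}^{2})=\{v:v_1=0\}$. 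Thus rank $2$ forces the collection to span all three new directions (there $\gamma^{K}$ is not even needed). For the rank-$1$ case one fixes $0\neq c_0=(r_0,s_0)^{T}$ spanning $V$, so the span in question equals $\operatorname{span}\{\Theta c_0,\Phi c_0,g\}$. Expanding the $3\times3$ determinant gives $\det[\,\Theta c_0\mid\Phi c_0\mid g\,]=pq^{2}r_0\bigl((ps_0-qr_0)+(qr_0-ps_0)\bigr)=0$, so the span has dimension at most $2$. For the reverse inequality one uses $g\neq0$ and checks: if $p\neq0$, then $\Theta c_0$ has vanishing third coordinate while $g$ does not, and $\Theta c_0=0$ would force $r_0=s_0=0$, so $\{\Theta c_0,g\}$ is independent; if $p=0$ (hence $q\neq0$), then $\Phi c_0=(0,qs_0,qr_0)^{T}$ is nonzero and is not a multiple of $g=(q^{2},0,0)^{T}$, so $\{\Phi c_0,g\}$ is independent. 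Either way the dimension is exactly $2$, and this is the one place where $\gamma^{K}$ is genuinely required.

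The main thing to get right — there is no serious obstacle beyond this — is the bookkeeping: matching the $\mathcal{R}$–indices occurring in (\ref{e:thetafieldoddformulasWithR})–(\ref{e:phifieldevenformulasWithR}) with the rows of $\Omega_{x_1x_2x_3}^{K}$ through (\ref{e:ROdd1})–(\ref{e:REven4}) together with the inductive identities (\ref{e:indRoddtoeven}) and (\ref{e:indReventoodd}) (which give $\mathcal{R}^{K,1}_\tau=\mathcal{R}^{K-1,1}_\tau$, $\mathcal{R}^{K,2}_\tau=\mathcal{R}^{K-1,2}_\tau$ for $K$ odd, and the analogue for $K$ even), and verifying that the generic-fiber hypothesis $(a_1,a_2)\neq(0,0)$, resp. $(a_3,a_4)\neq(0,0)$, is exactly what makes $(p,q)\neq(0,0)$ along the fiber. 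Once the vectors $\Theta c$, $\Phi c$, $g$ are written down correctly, the rest is the two short case analyses above.
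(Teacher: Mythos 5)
Your proof is correct, and since the paper itself offers no proof of this lemma (it explicitly says that, given the display formulas for the fields, ``it is an exercise in linear algebra to prove the lemma below''), you have in effect supplied precisely the exercise the authors had in mind. The reduction you perform — projecting $\theta^{K}_\tau$, $\phi^{K}_\tau$, $\gamma^{K}$ onto the last three coordinates via the two matrices $\Theta,\Phi$ and the vector $g$, observing that only the column space $V$ of $\Omega^{K}_{x_1x_2x_3}$ enters, and then doing the short case analysis in $(p,q)$ for rank $2$ and the determinant identity plus independence check for rank $1$ — is exactly what the formulas (\ref{e:thetafieldoddformulasWithR})--(\ref{e:phifieldevenformulasWithR}) set up, so this is not a different route but a careful writing-out of the intended one. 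Your bookkeeping with the index shift $\mathcal{R}^{K,j}=\mathcal{R}^{K-1,j}$ for $j=1,2$ in the odd case (resp.\ $j=3,4$ in the even case), and the observation that the genericity condition translates to $(p,q)\neq(0,0)$ on the fiber, are both correct, and I verified the determinant expansion $\det[\Theta c_0\mid\Phi c_0\mid g]=pq^2r_0\bigl((ps_0-qr_0)+(qr_0-ps_0)\bigr)=0$ independently. One small remark, which is not an error: the way the lemma is set up, the fields are indexed by all of $\mathcal{T}_{K-1}$ even though formulas (\ref{e:thetafieldodd})--(\ref{e:gammaeven}) are nominally stated for $\Xi_K=\mathcal{T}_{K-1}\setminus\mathcal{T}_{K-2}$; the formulas make sense for every triple in $\mathcal{T}_{K-1}$ and your argument tacitly uses this, which is the intended reading.
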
  

Because of the formulas (\ref{e:indRoddtoeven}) and (\ref{e:indReventoodd}) we have the lemma below.

\begin{Lem} \label{l:rankIsStable}
Let $K\le L$ and put \[\mathcal{M}_K^{L}(\vec{Z}_L)= \renewcommand*{\arraystretch}{1.5}\begin{pmatrix} \mathcal{R}_{x_1x_2x_3}^{L,1}(\vec{Z}_L) & \dots \\ \mathcal{R}_{x_1x_2x_3}^{L,2}(\vec{Z}_L) & \dots \\ \mathcal{R}_{x_1x_2x_3}^{L,3}(\vec{Z}_L) & \dots \\ \mathcal{R}_{x_1x_2x_3}^{L,4}(\vec{Z}_L) & \dots \end{pmatrix} \] where $(x_1,x_2,x_3)$ run over all triples in $\mathcal{T}_K$. For all $L \ge K$ \[\operatorname{Rank}\mathcal{M}_K^K(\vec{Z}_L)=\operatorname{Rank}\mathcal{M}_K^L(\vec{Z}_L).\]
\end{Lem}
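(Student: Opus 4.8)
The plan is to show that $\mathcal{M}_K^L(\vec{Z}_L)$ and $\mathcal{M}_K^K(\vec{Z}_L)$ differ only by left multiplication by an invertible $(4\times 4)$ matrix, which forces the two ranks to coincide since multiplication by an invertible matrix preserves rank.

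First I would isolate the elementary step. For a fixed triple $(x_1,x_2,x_3)$ write $R^\ell$ for the column vector $\bigl(\mathcal{R}_{x_1x_2x_3}^{\ell,1},\mathcal{R}_{x_1x_2x_3}^{\ell,2},\mathcal{R}_{x_1x_2x_3}^{\ell,3},\mathcal{R}_{x_1x_2x_3}^{\ell,4}\bigr)^{T}$. The relations (\ref{e:indRoddtoeven}) and (\ref{e:indReventoodd}) say precisely that $R^{\ell}=T_\ell\,R^{\ell-1}$, where $T_\ell$ is the $(4\times 4)$ matrix appearing there: it is triangular with $1$'s on the diagonal --- hence invertible --- and its off-diagonal entries only involve the three variables of the group introduced at level $\ell$; in particular $T_\ell$ does not depend on the triple. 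The one point to check is that these transformation laws legitimately apply at every level $\ell$ with $K<\ell\le L$: this holds because the triples we range over lie in $\mathcal{T}_K$, so they are built from variables of $\vec{Z}_K$ only and are therefore "old" at each such level, which is exactly the situation in which (\ref{e:indRoddtoeven})--(\ref{e:indReventoodd}) were derived.

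Composing the elementary steps from level $K$ up to level $L$ then gives $R^{L}=T\,R^{K}$ with $T=T_{L}T_{L-1}\cdots T_{K+1}$, a product of invertible matrices evaluated at $\vec{Z}_L$, hence invertible, and still independent of the triple. Stacking this identity over all triples of $\mathcal{T}_K$, and interpreting $\mathcal{M}_K^K(\vec{Z}_L)$ via the abuse of notation of Remark \ref{r:inclusionPolyRing} (i.e.\ evaluating $\mathcal{M}_K^K$ at the $\vec{Z}_K$-part of $\vec{Z}_L$), one obtains the matrix identity $\mathcal{M}_K^L(\vec{Z}_L)=T(\vec{Z}_L)\,\mathcal{M}_K^K(\vec{Z}_L)$, and the claimed equality of ranks follows immediately.

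I do not anticipate a genuine obstacle here once formulas (\ref{e:indRoddtoeven}) and (\ref{e:indReventoodd}) are in hand; the statement is essentially a one-line consequence of rank-invariance under multiplication by invertible matrices. The only care needed is the bookkeeping --- keeping track of the parity-dependent shape of $T_\ell$ and of the abuse of notation that allows the lower-level matrix to be evaluated at a higher-level point.
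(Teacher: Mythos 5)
Your argument is correct and is exactly the intended reading of the paper's one-line justification, which simply asserts the lemma "because of the formulas (\ref{e:indRoddtoeven}) and (\ref{e:indReventoodd})": those formulas give $R^{\ell}=T_\ell R^{\ell-1}$ with $T_\ell$ unipotent triangular (hence invertible) and independent of the triple, so stacking over the triples in $\mathcal{T}_K$ yields $\mathcal{M}_K^L(\vec{Z}_L)=T(\vec{Z}_L)\,\mathcal{M}_K^K(\vec{Z}_L)$ and the ranks agree.
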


The importance of Lemma \ref{l:spanNewDirections} and Lemma \ref{l:rankIsStable} is seen in the following corollary.

\begin{Cor}\label{c:newdirections}
Let $L>K$ and $\vec{Z}_K$ be a point where $\operatorname{Rank} \mathcal{M}_K^K(\vec{Z}_K)=4$. Then for all points $\vec{Z}_L$ contained in a generic fiber $\mathcal{F}_{(a_1,a_2,a_3,a_4)}^L$ such that $\pi(\vec{Z}_L)=\vec{Z}_K$ the complete fields \[ \left\{\theta_{x_1x_2x_3}^{L};(x_1,x_2,x_3)\in \mathcal{T}_{L-1} \right\}\cup \left\{\phi_{x_1x_2x_3}^{L};(x_1,x_2,x_3)\in \mathcal{T}_{L-1} \right\} \cup \left\{\gamma^{L}\right\} \] span all new directions (the directions along the last three variables in $ (\mathbb{C}^3)^L$). 	
\end{Cor}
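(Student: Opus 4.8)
The plan is to obtain this corollary as a short consequence of Lemma~\ref{l:rankIsStable} and Lemma~\ref{l:spanNewDirections}, with only elementary linear algebra in between; I do not expect a real obstacle here, since the substantive content sits entirely in those two lemmas and what remains is bookkeeping with index ranges.

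First I would record the essential but trivial observation that each function $\mathcal{R}_{x_1x_2x_3}^{K,j}$ with $(x_1,x_2,x_3)\in\mathcal{T}_K$ depends only on the variables of the first $K$ groups $Z_1,W_1,\dots$: it is a $3\times 3$ minor of the Jacobian of $(P_1^K,\dots,P_4^K)$, which are polynomials in exactly those variables, and the triples in $\mathcal{T}_K$ involve only those variables. Hence whenever $\pi(\vec{Z}_L)=\vec{Z}_K$ we have $\mathcal{M}_K^K(\vec{Z}_L)=\mathcal{M}_K^K(\vec{Z}_K)$, which by hypothesis has rank $4$.

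Next I would propagate this rank up to level $L-1$. By Lemma~\ref{l:rankIsStable} applied to the pair $(K,L)$, $\operatorname{Rank}\mathcal{M}_K^L(\vec{Z}_L)=\operatorname{Rank}\mathcal{M}_K^K(\vec{Z}_L)=4$. Since the sets $\mathcal{T}_j$ are nested (Lemma~\ref{l:completetriple}) and $L-1\ge K$ because $L>K$, we have $\mathcal{T}_K\subseteq\mathcal{T}_{L-1}$, so $\mathcal{M}_K^L(\vec{Z}_L)$ is obtained from $\mathcal{M}_{L-1}^L(\vec{Z}_L)$ by deleting columns; therefore $\operatorname{Rank}\mathcal{M}_{L-1}^L(\vec{Z}_L)\ge 4$, hence $=4$. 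Applying Lemma~\ref{l:rankIsStable} once more, now to the pair $(L-1,L)$, gives $\operatorname{Rank}\mathcal{M}_{L-1}^{L-1}(\vec{Z}_L)=4$. Then I would read off the hypothesis of Lemma~\ref{l:spanNewDirections}: the $4\times N_{L-1}$ matrix $\mathcal{M}_{L-1}^{L-1}(\vec{Z}_L)$ has rank $4$, so its four rows are linearly independent, and hence so is any pair of them; but $\Omega_{x_1x_2x_3}^L(\vec{Z}_L)$ is precisely the submatrix formed by rows $1,2$ when $L$ is odd and by rows $3,4$ when $L$ is even, so $\operatorname{Rank}\Omega_{x_1x_2x_3}^L(\vec{Z}_L)=2$. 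Since $\vec{Z}_L$ lies in a generic fiber, Lemma~\ref{l:spanNewDirections} then yields that $\{\theta_{x_1x_2x_3}^L\}\cup\{\phi_{x_1x_2x_3}^L\}\cup\{\gamma^L\}$, with triples running over $\mathcal{T}_{L-1}$, spans all three new directions, which is the assertion.

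The only points requiring care are the two slightly different index configurations of Lemma~\ref{l:rankIsStable} used above, the verification that $L-1\ge K$ so that the nesting $\mathcal{T}_K\subseteq\mathcal{T}_{L-1}$ is available, and the identification of $\Omega^L$ as the appropriate two-row submatrix of $\mathcal{M}_{L-1}^{L-1}$; all of these are immediate. Conceptually the middle step just says that the rank of the $\mathcal{R}$-system can only grow when one both adds more symplectic factors and enlarges the index set of triples from $\mathcal{T}_K$ to $\mathcal{T}_{L-1}$, while $4$ is the a priori maximum, so the full rank propagates.
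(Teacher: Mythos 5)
Your proof is correct and fills in, more carefully than the paper itself, the details behind its one-line argument ("Two rows of the rank 4 matrix $\mathcal{M}_K^L(\vec{Z}_L)$ are linearly independent"): you use Lemma~\ref{l:rankIsStable} to propagate the full-rank condition from level $K$ to level $L-1$, observe that any two rows of a rank-$4$ $4\times N$ matrix are linearly independent, and identify $\Omega^L$ as the appropriate two-row submatrix of $\mathcal{M}_{L-1}^{L-1}(\vec{Z}_L)$ so that Lemma~\ref{l:spanNewDirections} applies. This is essentially the paper's intended argument, spelled out; your slightly longer route (up to $\mathcal{M}_{L-1}^{L}$ and back down to $\mathcal{M}_{L-1}^{L-1}$) could be shortened by applying Lemma~\ref{l:rankIsStable} directly to the pair $(K,L-1)$ and using that the two relevant rows of $\mathcal{M}_K^{L-1}$ already form a column-submatrix of $\Omega^L$, but the conclusion is the same.
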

\begin{proof}
	Two rows of the rank 4 matrix $\mathcal{M}_K^L(\vec{Z}_L)$ are linearly independent. 
\end{proof}

\begin{table} 
\renewcommand*{\arraystretch}{1.5}
\begin{tabulary}{0.9\textwidth}{|C|C|C|C|C|}
\hline
 $\partial_{x_1x_2x_3}^2$ & $\mathcal{R}_{x_1x_2x_3}^{2,1}$ & $\mathcal{R}_{x_1x_2x_3}^{2,2}$ & $\mathcal{R}_{x_1x_2x_3}^{2,3}$ & $\mathcal{R}_{x_1x_2x_3}^{2,4}$ \\
\hline
$ \partial_{w_1w_2w_3}^2 $ & 0 & 0 & 0 & 0 \\
\hline
$ \partial_{z_2w_2w_3}^2 $ & 0 & $z_3^2$ & 0 & 0 \\
\hline
$ \partial_{z_3w_1w_2}^2 $ & $z_2^2$ & 0 & 0 & 0 \\
\hline
$ \partial_{z_2w_1w_3}^2 $ & 0 & $z_2z_3$ & 0 & 0 \\
\hline
$ \partial_{z_3w_1w_3}^2 $ & $z_2z_3$ & 0 & 0 & 0 \\
\hline
$ \partial_{z_2z_3w_1}^2 $ & $z_2w_2$ & $-z_2w_3$ & 0 & $z_2$ \\
\hline
$ \partial_{z_2z_3w_3}^2 $ & $-z_3w_1$ & $z_3w_2$ & $z_3$ & 0 \\
\hline
\end{tabulary}
\caption{The expressions for $\mathcal{R}_{x_1x_2x_3}^{2,i}$.} \label{table2}
\end{table}

\begin{Cor} \label{c:newdir}
	Let $L\ge 3$ and $\vec{Z}_L$ be a point that is contained in a generic fiber $\mathcal{F}_{(a_1,a_2,a_3,a_4)}^L$ and such that $z_2z_3\neq 0$. Then \[ \left\{\theta_{x_1x_2x_3}^{L};(x_1,x_2,x_3)\in \mathcal{T}_{L-1} \right\}\cup \left\{\phi_{x_1x_2x_3}^{L};(x_1,x_2,x_3)\in \mathcal{T}_{L-1} \right\} \cup \left\{\gamma^{L}\right\} \] span all new directions in $\vec{Z}_L$.
\end{Cor}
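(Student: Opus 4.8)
The plan is to reduce the statement to Corollary \ref{c:newdirections} by exhibiting, at the base level $K=2$, a point $\vec{Z}_2$ lying over the given data with $\operatorname{Rank}\mathcal{M}_2^2(\vec{Z}_2)=4$, and then lift this rank statement up to level $L$. More precisely, I would argue as follows. Since $z_2z_3\neq 0$ at the point $\vec{Z}_L$, the projection $\pi(\vec{Z}_L)$ determines values of $z_2,z_3,w_1,w_2,w_3$ (the variables entering $P^2$) with $z_2z_3\neq 0$. The hypothesis of Corollary \ref{c:newdirections} requires a point $\vec{Z}_K$ with $\operatorname{Rank}\mathcal{M}_K^K(\vec{Z}_K)=4$; I take $K=2$, so that $\mathcal{T}_2$ is the explicit $7$-element set from \eqref{e:T2} and $\mathcal{M}_2^2$ is precisely the matrix whose columns are read off from Table \ref{table2}.

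The core computation is then to check that the $4\times 7$ matrix in Table \ref{table2} has rank $4$ whenever $z_2z_3\neq 0$. This is immediate from inspection: the row containing $\mathcal{R}^{2,3}_{x_1x_2x_3}$ has the entry $z_3$ in the column $\partial^2_{z_2z_3w_3}$, the row containing $\mathcal{R}^{2,4}_{x_1x_2x_3}$ has the entry $z_2$ in the column $\partial^2_{z_2z_3w_1}$, the row with $\mathcal{R}^{2,1}$ has $z_2^2$ in column $\partial^2_{z_3w_1w_2}$, and the row with $\mathcal{R}^{2,2}$ has $z_3^2$ in column $\partial^2_{z_2w_2w_3}$; choosing these four columns gives a $4\times 4$ submatrix which, after the obvious column operations clearing the few remaining nonzero off-diagonal entries, is upper-triangular with diagonal $z_2^2,z_3^2,z_3,z_2$ (up to sign and reordering), hence has determinant a nonzero multiple of $(z_2z_3)^3$. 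Thus $\operatorname{Rank}\mathcal{M}_2^2=4$ at $\pi(\vec{Z}_L)$, and by Lemma \ref{l:rankIsStable} (with $K=2$) we get $\operatorname{Rank}\mathcal{M}_2^L(\vec{Z}_L)=\operatorname{Rank}\mathcal{M}_2^2(\vec{Z}_L)=4$ as well.

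Having established this, Corollary \ref{c:newdirections} applied with $K=2$ and this $L$ yields directly that the fields $\{\theta_{x_1x_2x_3}^{L}\}\cup\{\phi_{x_1x_2x_3}^{L}\}\cup\{\gamma^{L}\}$, with triples running over $\mathcal{T}_{L-1}\supset\mathcal{T}_2$, span all new directions at $\vec{Z}_L$, which is the assertion. I do not expect any genuine obstacle here: the only thing to be careful about is the bookkeeping identifying the matrix $\mathcal{M}_2^2$ with Table \ref{table2} and the degenerate conventions for the first group (where $z_1$ is dropped and $z_2,z_3$ play the role of the "new" $Z$-variables at level $2$), together with checking that $\mathcal{T}_2\subset\mathcal{T}_{L-1}$ so that the relevant columns are actually present at level $L$ — both of which are already recorded in Lemma \ref{l:completetriple} and the surrounding discussion.
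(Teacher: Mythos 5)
Your proposal is correct and is essentially the same argument the paper intends: the paper places Table \ref{table2} immediately before Corollary \ref{c:newdir} precisely so that the reader can read off that $\operatorname{Rank}\mathcal{M}_2^2=4$ when $z_2z_3\neq 0$ and then invoke Corollary \ref{c:newdirections} (plus the monotonicity $\mathcal{T}_2\subset\mathcal{T}_{L-1}$ from Lemma \ref{l:completetriple}). Your explicit choice of the four columns $\partial^2_{z_2w_2w_3},\partial^2_{z_3w_1w_2},\partial^2_{z_2z_3w_1},\partial^2_{z_2z_3w_3}$ gives a $4\times 4$ minor with determinant $(z_2z_3)^3$, so the bookkeeping checks out.
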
 
In order to use this corollary we need the following lemma.
\begin{Lem} \label{l:z2z3notzero} We have the following cases for the function $P=z_2z_3$ and the fibers $\mathcal{F}^K$:
\begin{enumerate}
    \item  $P$ is not identically zero on $\mathcal{F}_{(a_1,a_2,a_3,a_4)}^K$ for $K\ge 5$. For these $K$ the fibers $\mathcal{F}_{(a_1,a_2,a_3,a_4)}^K$ are irreducible.
    \item The fibers $\mathcal{F}_{(a_1,a_2,a_3,a_4)}^4$ are irreducible except when $$(a_1,a_2,a_3,a_4)=(0,0,0,1).$$ The function $P$ is not identically zero on fibers except for one component of $\mathcal{F}_{(0,0,0,1)}^4$.
    \item The fibers $\mathcal{F}_{(a_1,a_2,a_3,a_4)}^3$ are irreducible except when $$(a_1,a_2,a_3,a_4)=(a_1,a_2,0,1).$$ 
    The function $P$ is not identically zero on fibers except on $\mathcal{F}_{(0,a_2,0,0)}^3$ or $\mathcal{F}_{(a_1,0,0,0)}^3$ or on one component of the reducible fiber $\mathcal{F}_{(a_1,a_2,0,1)}^3$ where it is identically zero.
\end{enumerate}
       
\end{Lem}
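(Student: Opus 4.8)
The plan is to prove all three statements together by induction on $K$, exploiting the description of the fibers in Section~\ref{s:stratification}. The point is that every reduction there exhibits $\mathcal{F}^K_{(a_1,a_2,a_3,a_4)}$ as a graph over $\mathcal{G}^{K-1}_{(a_3,a_4)}\times\mathbb{C}$ or $\mathcal{H}^{K-1}_{(a_1,a_2)}\times\mathbb{C}$ (the generic strata), or as $\mathcal{F}^{K-1}_{(a_1,a_2,0,0)}\times\mathbb{C}^3$ or $\mathcal{F}^{K-1}_{(0,0,a_3,a_4)}\times\mathbb{C}^3$ (the non-generic strata), and that under all of these $z_2$ and $z_3$ survive as coordinate functions on the base. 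Hence $\mathcal{F}^K$ is irreducible (resp.\ has exactly the stated components) precisely when the corresponding base is, and $P=z_2z_3$ vanishes identically on $\mathcal{F}^K$ (or on one of its components) precisely when it does on that base. So everything reduces to analysing $\mathcal{G}^{K-1}$ and $\mathcal{H}^{K-1}$. Applying the recursions (\ref{e:oddcase})--(\ref{e:evencase}) one further time, over the Zariski open set where the relevant coefficient pair $(P^{K-2}_1,P^{K-2}_2)$ (for $\mathcal{G}$) or $(P^{K-2}_3,P^{K-2}_4)$ (for $\mathcal{H}$) does not vanish, the three variables introduced on level $K-1$ form the solution set of a rank-$2$ linear system; so over that open set $\mathcal{G}^{K-1}$ (resp.\ $\mathcal{H}^{K-1}$) is an affine line bundle $M$ over an irreducible quasi-affine variety, in particular $M$ is irreducible of the expected dimension and $z_2z_3\not\equiv 0$ on $M$ (put $z_2=z_3=1$ and solve; the surviving equation stays non-trivial, as one sees from the explicit $P^j_i$). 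Its complement is $\mathcal{F}^{K-2}_{(0,0,a_3,a_4)}\times\mathbb{C}^3$ (for $\mathcal{G}$) or $\mathcal{F}^{K-2}_{(a_1,a_2,0,0)}\times\mathbb{C}^3$ (for $\mathcal{H}$), a set of strictly smaller dimension.

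The crucial step is to show this complement is contained in $\overline M$, so that $\mathcal{G}^{K-1}$ (resp.\ $\mathcal{H}^{K-1}$), hence $\mathcal{F}^K$, is irreducible. I would argue by a limit/transversality argument: let $\vec Z\in\mathcal{F}^{K-2}$ be a submersion point of $\pi_4\circ\Psi_{K-2}$ --- by Lemma~\ref{l:submersive} these form a dense open subset of any component of $\mathcal{F}^{K-2}$ not contained in $S_{K-2}$. Perturb $\vec Z$ off $\mathcal{F}^{K-2}$ realizing any prescribed velocity of $(P^{K-2}_1,\dots,P^{K-2}_4)$, and choose the velocity so that the two new coefficient directions are compatible with the fixed value of the three level-$(K-1)$ variables; then the perturbed point lies on $M$, and as the perturbation tends to $0$ it converges to the given point of the complement. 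Since $\overline M$ is closed, this forces the whole component of the complement into $\overline M$. Consequently $\mathcal{F}^K$ is irreducible whenever every component of $\mathcal{F}^{K-2}$ meets the complement of $S_{K-2}$, and $z_2z_3\not\equiv 0$ on $\mathcal{F}^K$ because it already is on $M$. I expect this closure step to be the main obstacle: it is exactly where submersivity (Lemma~\ref{l:submersive}) is used, and exactly what fails for $K=3$, where the stray piece sits over $\mathcal{F}^1_{(0,0,0,1)}$, a single point at which $\pi_4\circ\Psi_1$ is not submersive (its Jacobian has rank $2$), so the stray $\mathbb{C}^3$ remains a separate component.

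It then remains to dispatch the base cases, where Table~\ref{table2} and the explicit formulas for $P^2,P^3,P^4$ do the bookkeeping. For $K=3$: $\mathcal{G}^2_{(a_3,a_4)}=\{z_2w_1+z_3w_2=a_3,\ 1+z_2w_2+z_3w_3=a_4\}$ is an irreducible affine line bundle over $\{(z_2,z_3)\neq(0,0)\}$ with $z_2z_3\not\equiv 0$; its complement $\{z_2=z_3=0\}$ is empty unless $(a_3,a_4)=(0,1)$, where it becomes a genuine extra $\mathbb{C}^3$-component on which $z_2z_3\equiv 0$ (indeed $\overline M\cap\{z_2=z_3=0\}=\{w_1w_3=w_2^2\}$). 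Together with the non-generic stratum $\mathcal{F}^2_{(a_1,a_2,0,0)}\times\mathbb{C}^3$, where $z_2\equiv a_1$ and $z_3\equiv a_2$, this yields (3). For $K=4$ the stray piece $\mathcal{F}^2_{(a_1,a_2,0,0)}\times\mathbb{C}^3$ avoids $S_2$ (there $z_2=a_1$, $z_3=a_2$ cannot both vanish when $(a_1,a_2)\neq(0,0)$), so the closure step applies and $\mathcal{H}^3_{(a_1,a_2)}$ is irreducible; combined with the non-generic stratum $\mathcal{F}^3_{(0,0,a_3,a_4)}\times\mathbb{C}^3$ treated by (3), this gives (2).

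For $K=5$ one checks that both components of the only reducible lower fiber $\mathcal{F}^3_{(a_1,a_2,0,1)}$ --- one with $z_2z_3\not\equiv 0$, the other contained in $\{z_2=z_3=0\}$ but not in $\{w_1w_3=w_2^2\}$ --- meet the complement of $S_3$, so the closure step applies and $\mathcal{G}^4$, $\mathcal{H}^4$ are irreducible, giving (1) for $K=5$. For $K\ge 6$ one runs the same check: in the generic strata $\mathcal{F}^{K-2}$ is irreducible by the inductive hypothesis (or is $\mathcal{F}^4_{(0,0,0,1)}$ for $K=6$, whose two components again avoid $S_4$ since one has $z_2z_3\not\equiv 0$ and the other has $w_1w_3-w_2^2\not\equiv 0$), and a short verification shows it is not contained in $S_{K-2}$, so the stray piece is absorbed into $\overline M$; the non-generic strata are handled directly by the inductive hypothesis. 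This closes the induction, and the running ledger of empty fibers and exceptional parameter loci reproduces the exact exception lists in (2) and (3).
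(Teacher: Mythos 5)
Your proof takes a genuinely different route from the paper. The paper works one level down and in two stages: it first proves connectedness of $\mathcal{F}^K$ via the projection $\rho\colon \mathcal{F}^K\to\mathbb{C}^3$ (whose fibers are $\mathcal{F}^{K-1}$-fibers, using submersivity of $\rho$ at smooth points to show components are $\rho$-saturated and open), and then separately proves connectedness of the smooth locus; for odd $K$ this second step is handled by a codimension-$2$ argument (the singular $\rho$-fibers form a codimension-$2$ subvariety of $\mathcal{F}^K_{(a_1,a_2,0,1)}$, so no smooth point can be separated). Your argument instead jumps two levels, decomposes $\mathcal{G}^{K-1}$ (or $\mathcal{H}^{K-1}$) into the irreducible generic locus $M$ and a stray piece $\mathcal{F}^{K-2}_{(\cdots)}\times\mathbb{C}^3$, and shows $\overline{M}$ absorbs the stray piece by perturbing a submersion point of $\Phi_{K-2}$. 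Both arguments ultimately rest on the same ingredient, namely submersivity of $\pi_4\circ\Psi$ off the thin set, but the packaging is different: yours is more uniform across parity of $K$, at the cost of needing to check at each step that each component of the stray fiber $\mathcal{F}^{K-2}_{(\cdots)}$ escapes $S_{K-2}$ (a check that is essentially powered by the $z_2z_3\not\equiv 0$ part of the inductive hypothesis), while the paper gains simplicity at $K-1$ but needs the codimension-$2$ argument for odd $K$.

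A few details deserve more care. The perturbation step requires choosing the velocity $v$ of $(P_1^{K-2},\dots,P_4^{K-2})$ to satisfy two linear constraints determined by $w^0$ (e.g.\ $v_3+w^0_{3k-1}v_2+w^0_{3k-2}v_1=0$ and $v_4+w^0_{3k}v_2+w^0_{3k-1}v_1=0$ with $v_1\neq 0$), and then verifying that the two solved-for $w$'s actually converge to the prescribed $w^0$ as the perturbation parameter tends to zero -- this works, but should be written out since it is the heart of the argument. Your claim that the stray piece has strictly smaller dimension fails precisely at $K=3$ (there $\mathcal{G}^2_{(0,1)}$ and the stray $\mathbb{C}^3$ both have dimension $3$), which is one reason $K=3$ is exceptional, so that sentence should be qualified. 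Also, $\mathcal{F}^1_{(0,0,0,1)}$ is the line $\{(z_1,0,0)\}$, not a single point (the variable $z_1$ is a free coordinate on the fiber even though it is suppressed from the notation), though this does not affect your conclusion that $\Phi_1$ is never a submersion onto $\mathbb{C}^4\setminus\{0\}$. Finally, the assertion that $z_2z_3\not\equiv 0$ on $M$ via "put $z_2=z_3=1$ and solve" is correct but terse: the cleaner statement is that $M$ is an affine line bundle over a nonempty Zariski-open subset of $\mathbb{C}^{3(K-2)}\times\mathbb{C}$ on which $z_2,z_3$ are free coordinate functions, hence $\{z_2z_3\neq 0\}$ meets $M$.
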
 

\begin{proof}
We first prove (3). The fibers $\mathcal{F}_{(a_1,a_2,0,0)}^3$ are just biholomorphic to $\mathbb{C}^5$ and $Z_2, z_3$ are constantly equal to $a_1, a_2$. This shows they are irreducible
and the assertion about the function $P$. The fibers $\mathcal{F}_{(a_1,a_2,0,1)}^3$ are
isomorphic to the variety  $ \mathcal{G}_{(0,1)}^2 $ given by two equations which can be written in matrix form as \begin{equation} 
\begin{pmatrix} w_1 & w_2 \\ w_2 & w_3 \end{pmatrix}\begin{pmatrix} z_2 \\ z_3 \end{pmatrix} = \begin{pmatrix} 0 \\ 0 \end{pmatrix}.
\end{equation} 
From this it can be seen that $ \mathcal{G}_{(0,1)}^2 $ has two irreducible components. One is \begin{equation} \label{e:a1}
A_1=\{z_2=z_3=0\}\cong \mathbb{C}_{w_1w_2w_3}^3 	
\end{equation}  and the other is  \begin{equation} \label{e:a2} A_2=\left\{\begin{pmatrix} w_1 & w_2 \\ w_2 & w_3 \end{pmatrix}\begin{pmatrix} z_2 \\ z_3 \end{pmatrix} = \begin{pmatrix} 0 \\ 0 \end{pmatrix} \text{ and } \det \begin{pmatrix} w_1 & w_2 \\ w_2 & w_3 \end{pmatrix}=0\right\}.\end{equation} The singularity set of $ \mathcal{G}_{(0,1)}^2 $ is $ A_1 \cap A_2 $. Clearly $P$ is identically zero on $A_1$ and not identically zero on
$A_2$. Observe that $\mathcal{F}_{(a_1,a_2,0,1)}^3$ are connected, their smooth part consists of the two connected
components $A_1 \setminus A_2 $ and $A_2 \setminus A_1$.

The smooth generic fibers $\mathcal{F}_{(a_1,a_2,a_3,a_4)}^3$ for $(a_3, a_4) \notin \{(0,0), (0,1)\}$ are isomorphic to the variety  $ \mathcal{G}_{(a_3,a_4)}^2 $ given by
the two equations
\begin{equation} z_2 w_1 + z_3 w_2 = a_3
\end{equation}
and 
\begin{equation} z_2 w_2 + z_3 w_3 + 1 = a_4
\end{equation}
In case $z_2 \ne 0$ these equations can be used to express $w_2$ and $w_3$ by the other variables and we get a chart
isomorphic to $\mathbb{C}_{z_2}^\star  \times \mathbb{C}_{z_3}\times \mathbb{C}_{w_3}$. In case $z_3 \ne 0$ we can express $w_2$ and $w_3$ which gives us a similar chart. Thus  $ \mathcal{G}_{(a_3,a_4)}^2 $ is covered by two connected charts with non-empty intersection which shows that it is connected.  Thus the smooth generic fibers $\mathcal{F}_{(a_1,a_2,a_3,a_4)}^3$ are irreducible. The function $P$ is not identically zero on both charts. The assertion (3) is completely proven.

Next we prove assertion (2). The non-generic fibers $\mathcal{F}_{(0,0,a_3,a_4)}^4$ are isomorphic to
$\mathcal{F}_{(0,0,a_3,a_4)}^3 \times \mathbb{C}^3$, where $\mathbb{C}^3$ corresponds to the new variables $w_4, w_5, w_6$. All assumptions about these fibers follow therefore from the corresponding assumptions about 
$\mathcal{F}_{(0,0,a_3,a_4)}^3$.  

In the case of generic fibers which are known to be smooth (see Section \ref{s:stratification}) we just have to prove they are connected. 
For this consider \begin{equation}
 	\mathcal{F}_{(a_1,a_2,a_3,a_4)}^4=\bigcup_{(w_4,w_5,w_6)\in \mathbb{C}^3} \mathcal{F}_{(a_1,a_2,b_3,b_4)}^3 \end{equation} where $b_3= a_3-w_4a_1-w_5a_2$ and $b_4= a_4-w_5a_1-w_6a_2$.
 	In other words we consider  the surjective projection $\rho: \mathcal{F}_{(a_1,a_2,a_3,a_4)}^4\to \mathbb{C}^3$, mapping a point to
 	its last three coordinates $(w_4, w_5, w_6)$ where the $\rho$-fibers are just fibers $\mathcal{F}_{(a_1,a_2,b_3,b_4)}^3 $. Connectedness of the $\rho$-fibers implies that  a connected component of 
 	$\mathcal{F}_{(a_1,a_2,a_3,a_4)}^4$ has to be $\rho$-saturated.
 	Since $\rho$ is a submersion in generic points of the fiber (it is not a submersion only in singular points of an  $\mathcal{F}_{}^3 $-fiber) any connected components of $\mathcal{F}_{(a_1,a_2,a_3,a_4)}^4$ is equal to $\rho^{-1} (U)$, where $U$ is some open subset of the base $\mathbb{C}^3$. Since 
 	the base is connected and $\rho$ is surjective connectedness of 
 	$\mathcal{F}_{(a_1,a_2,a_3,a_4)}^4$ follows. The function $P$ is
 	not identically on any $\mathcal{F}_{}^3 $-fiber contained in
 	$\mathcal{F}_{(a_1,a_2,a_3,a_4)}^4$, thus not identically zero
 	on $\mathcal{F}_{(a_1,a_2,a_3,a_4)}^4$ itself. This concludes the
 	proof of (2).
 	
 Last we prove assertion (1). The connectedness of 	the fibers 
 $\mathcal{F}_{(a_1,a_2,a_3,a_4)}^K$ for $K\ge 5$ can be proven by induction in a similar way as the connectedness of the generic 
 $\mathcal{F}_{}^4$-fibers is deduced from the properties of 
 $\mathcal{F}_{}^3$-fibers. We consider as above the surjective projection $\rho : \mathcal{F}_{(a_1,a_2,a_3,a_4)}^K \to \mathbb{C}^3$ onto the last three variables who's fibers are 
 $\mathcal{F}_{}^{K-1} $-fibers. Since again $\mathcal{F}_{}^{K-1} $-fibers are connected and  $\rho$ is a submersion in  smooth points of the 
 $\mathcal{F}_{}^{K-1} $-fibers, any connected component of $\mathcal{F}_{(a_1,a_2,a_3,a_4)}^K$ is 
 of the form $\rho^{-1} (U)$, where $U$ is some open subset of the base $\mathbb{C}^3$. 
 
 In addition we will prove by induction that the smooth part
  of the singular
 fibers $\mathcal{F}_{(a_1,a_2,a_3,a_4)}^K\setminus \operatorname{Sing} \mathcal{F}_{(a_1,a_2,a_3,a_4)}^K $ is connected for $K\ge 5$. Together with connectedness of the fibers  this implies  the irreducibility of the fibers. 
 
 For even $K$ the singular fibers are the singular $\mathcal{F}_{}^{K-1} $-fibers times
 $\mathbb{C}^3$ and therefore the connectedness of the smooth part follows by induction hypothesis. 
 
 For odd $K=2k+1$ we are faced with the following situation: 
 The singular fibre is $\mathcal{F}_{(a_1,a_2,0,1)}^K$ and it is fibered by $\mathcal{F}_{}^{K-1} $-fibers all of which are smooth except for the fibers $\mathcal{F}_{(0,0,0,1)}^{K-1}$. The union of those fibers forms a codimension $2$ subvariety of 
 $\mathcal{F}_{(a_1,a_2,0,1)}^K$ (given by the equations $z_{3k+2}=z_{3k+3}=0$). By the argument above the complement, call it W,
 of this union in  $\mathcal{F}_{(a_1,a_2,0,1)}^K$ is connected. The singular points
 of $\mathcal{F}_{(a_1,a_2,0,1)}^K$ are contained in that union and is contained in (but not equal to) the union of the singular points of the fibers $\mathcal{F}_{(0,0,0,1)}^{K-1}$. We want to prove 
 that any smooth point $p$ of $\mathcal{F}_{(a_1,a_2,0,1)}^K$ which is contained in a fiber $\mathcal{F}_{(0,0,0,1)}^{K-1}$ is contained in the in the connected component containing $W$. Since the complement of $W$
  has codimension 2 in $\mathcal{F}_{(a_1,a_2,0,1)}^K$ an open neighborhood of $p$ in $\mathcal{F}_{(a_1,a_2,0,1)}^K$ has to intersect $W$, which gives the desired conclusion.
  
  As in the proof of (2) the function $P$ cannot be identical zero on any fiber  $\mathcal{F}_{(a_1,a_2,a_3,a_4)}^K$
  since this fiber contains $\mathcal{F}_{}^{K-1} $-fibers on which by induction hypothesis $P$ is not
  identical zero.

\end{proof}

\begin{Rem} The fact that after a certain number of factors the fibers of the fibration become all irreducible is very general. It was proven by J. Draisma as an outcome of an interesting discussion with the second author. The irreducubility statement in our lemma is just an example  of  a much more general property. We refer the interested reader to \cite{Draisma}. The exact number from which on irreducibility of the fibers  holds (in our case $5$) is not known in general, although Draisma gives a bound.
\end{Rem}

\begin{Def}\label{d:generated} Let $ M $ be a manifold and $ A $ be a set of complete vector fields on $ M $. The flows of elements of $ A $ give one-parameter subgroups of $ \operatorname{Aut}(M) $. Denote by $ S $ the group generated by elements of those one-parameter subgroups (finite compositions of time maps of vector fields of elements from $ A $). Define
\[ \Gamma(A) = \left\{ \alpha^*X, \text{ where }\alpha\in S \text{ and } X\in A \right\}.  \]
Obviously $ \Gamma(A) $ consists of complete vector fields and we call it {\it the collection generated by $ A $.}
\end{Def}

\begin{Def}\label{d:qL}
 	 Let $L\ge 3$. We define \[ \begin{aligned} & \mathcal{Q}_L  = \\ & =   \Gamma\left(\bigcup_{J=3}^L \left\{\left\{\theta_{x_1x_2x_3}^{J};(x_1,x_2,x_3)\in \Xi_{J} \right\}\cup \left\{\phi_{x_1x_2x_3}^{J};(x_1,x_2,x_3)\in \Xi_{J} \right\} \cup \left\{\gamma^{J}\right\}\right\}\right). \end{aligned} \]
 	\end{Def}
 	
 	At each step of the induction we will prove the following Proposition, which plays a crucial role in the inductive proof of Proposition
\ref{p:mainprop}.

\begin{Prop} \label{p:transitivity}
For each $L \ge 4$ holds: There are finitely many (complete) fields from  $ \mathcal{Q}_L $ which span the tangent space 
$T_x \mathcal{F}^L$ at each smooth point of any generic fiber $\mathcal{F}^L$. For $L=3$ there are finitely many (complete) fields from  $ \mathcal{Q}_3 $ which span the tangent space 
$T_x \mathcal{F}^3$ at each point of any smooth generic fiber $\mathcal{F}^3$.
\end{Prop}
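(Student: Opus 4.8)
The plan is to prove Proposition \ref{p:transitivity} by induction on $L$, starting the induction at $L=3$. The key observation is that the tangent space $T_x\mathcal{F}^L$ splits into the ``old'' directions (the first $3(L-1)$ variables) and the three ``new'' directions introduced on level $L$; the fields $\theta^L_{x_1x_2x_3}$, $\phi^L_{x_1x_2x_3}$, $\gamma^L$ from $\Xi_L$ as well as the pullbacks by flows of old fields will be used to hit both parts. First I would do the base case $L=3$: here $\mathcal{Q}_3=\Gamma(\mathcal{T}_2)$ and the smooth generic fibers $\mathcal{F}^3_{(a_1,a_2,a_3,a_4)}$ with $(a_3,a_4)\ne(0,0)$ are isomorphic to $\mathcal{G}^2_{(a_3,a_4)}$, which by Lemma \ref{l:z2z3notzero}(3) is covered by the two charts $\{z_2\ne 0\}$ and $\{z_3\ne 0\}$ — in particular $z_2z_3$ is not identically zero. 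Using Table \ref{table2} and Corollary \ref{c:newdir}, the fields $\theta^3,\phi^3,\gamma^3$ span all three new directions wherever $z_2z_3\ne 0$; combined with the explicit complete fields $\partial^2_{x_1x_2x_3}$ for $(x_1,x_2,x_3)\in\mathcal{T}_2$ on the base, the collection spans $T_x\mathcal{F}^3$ on the open set $\{z_2z_3\ne 0\}$. To reach the remaining smooth points I would apply Lemma \ref{l:finitecollection} (or its parametrized version): pick one of the complete fields in $\mathcal{Q}_3$ whose orbit through a point with $z_2z_3=0$ leaves the hypersurface $\{z_2z_3=0\}$ — e.g. $\gamma^3$ or one of the $\partial^2$-fields moving $z_2$ or $z_3$ — then finitely many of its time-maps pulled back over the collection span everywhere on the smooth generic fiber, which is exactly the $L=3$ statement.

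\textbf{The inductive step.} Assume the statement for $L-1$; I want it for $L$. A point $\vec Z_L$ in a generic fiber $\mathcal{F}^L$ projects via $\pi$ to a point $\vec Z_{L-1}$ which lies in a fiber $\mathcal{F}^{L-1}$, and by the graph description in Section \ref{s:stratification} (equations (\ref{e:oddcase}), (\ref{e:evencase})) the fiber $\mathcal{F}^L$ maps onto $\mathcal{G}^{L-1}$ or $\mathcal{H}^{L-1}$ times $\mathbb{C}$. Two things must be checked: (a) the collection spans the three new directions, and (b) it spans the old directions. For (b), every complete field introduced at an earlier level $J\le L-1$ is, by Remark \ref{r:levelFields}, still tangent to $\mathcal{F}^L$ and has zero components along the new directions; its projection to the old variables is (a scalar multiple of) the corresponding field on $\mathcal{F}^{L-1}$, so the inductive hypothesis on $\mathcal{F}^{L-1}$ — together with the fact that $\mathcal{Q}_{L-1}\subset\mathcal{Q}_L$ — gives spanning of the old directions at all points lying over smooth points of a generic $\mathcal{F}^{L-1}$-fiber (here one uses that $\rho\colon\mathcal{F}^L\to\mathbb{C}^3$, the projection to the new variables, is a submersion exactly at such points, as exploited in the proof of Lemma \ref{l:z2z3notzero}). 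For (a), Lemma \ref{l:spanNewDirections} reduces the question to showing $\operatorname{Rank}\Omega^L_{x_1x_2x_3}(\vec Z_L)=2$; by Lemma \ref{l:rankIsStable} and Corollary \ref{c:newdirections} this rank equals the rank of $\mathcal{M}^{(L-1)}_{L-1}$ at the projected point, and by Corollary \ref{c:newdir} it is $2$ whenever $z_2z_3\ne 0$. Since by Lemma \ref{l:z2z3notzero}(1) the function $P=z_2z_3$ is not identically zero on any $\mathcal{F}^L$ with $L\ge 5$ (and for $L=3,4$ only fails on an explicit exceptional set contained in a lower stratum, so causes no trouble for \emph{generic} fibers in the sense needed here), the set where the current collection fails to span is a proper analytic subset of each generic fiber.

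\textbf{Enlarging the collection.} Having shown the collection $\mathcal{Q}_L$ spans $T_x\mathcal{F}^L$ off a proper analytic subset $N$ of each generic fiber, the final move is again Lemma \ref{l:finitecollection} in its fibered/parametrized form: one must exhibit complete fields in $\mathcal{Q}_L$ whose orbits through any point of $N\setminus(\text{lower stratum})$ leave $N$ — equivalently, there is no $\mathcal{Q}_L$-invariant analytic subset strictly inside a generic fiber. This is where I expect the main obstacle to lie: one needs enough of the explicit fields $\theta,\phi,\gamma,\partial$ to witness that $\{z_2z_3=0\}$ and the other bad loci are not invariant, which requires looking concretely at the low-level fields (Table \ref{table2} and equation (\ref{e:V12})) and checking that, say, flowing along $\gamma^L$ or a $\partial^2$-field increases $z_2z_3$ from zero — and then propagating this through the inductive construction so that the ``good set'' genuinely exhausts each generic fiber. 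The separate treatment of $L=3,4,5$ in later sections (as announced in the introduction) is precisely because for these small $L$ some fibers are reducible and the invariance/orbit arguments must be run by hand; from $L\ge 5$ on, irreducibility of all fibers (Lemma \ref{l:z2z3notzero}(1)) makes the ``no invariant subvariety'' condition easy, and the induction proceeds uniformly. With that, finitely many pullbacks adjoined to $\mathcal{Q}_L$ span $T_x\mathcal{F}^L$ at every smooth point of every generic fiber, completing the induction.
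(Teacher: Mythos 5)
Your overall architecture — base case $L=3$ from Table \ref{table1}, then an induction that uses Proposition \ref{p:transitivity} for $L-1$ on the old directions, Corollary \ref{c:newdir} together with Lemma \ref{l:z2z3notzero} on the new directions, and the parametrized version of Lemma \ref{l:finitecollection} to enlarge $\mathcal{Q}_L$ until it spans everywhere — is exactly the paper's strategy, and the base case $L=3$ as you sketch it is essentially the paper's argument.

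The gap is in the last step, which you identify as ``the main obstacle'' and then dismiss with ``from $L\ge 5$ on, irreducibility of all fibers makes the no-invariant-subvariety condition easy.'' This is wrong as stated: irreducibility of $\mathcal{F}^L$ guarantees that the good set where $\mathcal{Q}_L$ spans is Zariski dense, but it does not rule out proper $\mathcal{Q}_L$-invariant analytic subsets (for example all fields could simultaneously vanish, or all be tangent to a subvariety). Verifying the hypothesis of the parametrized Lemma \ref{l:finitecollection} — that no analytic subset of a generic fiber is invariant — is precisely where the real work of the paper is located. In Section \ref{s:proof4factors} through \ref{s:induction} the paper tracks the candidate invariant locus step by step (points projecting onto singular $\mathcal{F}^{L-1}_{(\cdot,\cdot,0,1)}$-fibers, then further down to conditions like (\ref{e:condonZs}), (\ref{e:rank}), (\ref{e:extracondeven})), computes specific fields ($\theta^{2k}_{z_{3k-4}z_{3k-3}z_{3k}}$, $\phi^{2k}_{z_{3k-2}z_{3k-1}z_{3k}}$, $\phi^{2k+1}_{z_{3l-1}z_{3l}z_{3k}}$, $\gamma^{3l}$, \dots), computes the minors $\mathcal{D}_1,\mathcal{D}_2,\mathcal{D}_3$ on that locus, and runs a case analysis in $(w_{3k-1},w_{3k})$ to arrive at a contradiction with the non-singularity of the ambient fiber. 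None of this follows formally from irreducibility, and without it the induction does not close.

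A second, related issue: your inductive step says the old directions are spanned ``at all points lying over smooth points of a generic $\mathcal{F}^{L-1}$-fiber,'' but a point of a generic $\mathcal{F}^L$-fiber may project into a \emph{non-generic} $\mathcal{F}^{L-1}$-fiber (when $(b_1,b_2)=(0,0)$, resp.\ $(b_3,b_4)\in\{(0,0),(0,1)\}$), or into a singular one. The paper splits into these cases explicitly each time (see the boldface case headings in Sections \ref{s:proof4factors}--\ref{s:induction}) and handles the singular/non-generic image fibers by combining the product structure $\mathcal{F}^{L-1}=\mathcal{F}^{L-2}\times\mathbb{C}^3$ with the lower-level Proposition \ref{p:transitivity}. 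Your proposal treats these points only implicitly as part of the ``bad locus'' to be flowed off, but you need the explicit orbit computations above to actually move off it, so this hides the same missing content.
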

\begin{Rem} \label{r:diffL3and4}
For $L=3$ singular generic fibers $\mathcal{F}^3_{(a_1,a_2,0,1)}$ have two irreducible components and we can prove the statement about smooth points on generic fibers only for one of those components. It is false for the other component.
\end{Rem}

\section{Proof of Proposition \ref{p:mainprop}: 3 matrix factors}\label{s:proof3factors}

In Table \ref{table1} we list the coefficients of the fields $ \partial_{x_1x_2x_3}^2 $ for all $ x_1, x_2, x_3 \in \mathcal{T}_2 $. 
\begin{table} 
\renewcommand*{\arraystretch}{1.5}
\begin{tabulary}{0.9\textwidth}{|C|C|C|C|C|C|}
\hline
  & $\partial / \partial z_2$ & $\partial / \partial z_3$ & $\partial / \partial w_1$ & $\partial / \partial w_2$ & $\partial / \partial w_3$ \\
\hline
$ \partial_{w_1w_2w_3}^2 $ & 0 & 0 & $ z_3^2 $ & $ -z_2z_3 $ & $ z_2^2 $ \\
\hline
$ \partial_{z_2w_2w_3}^2 $ & $ z_3^2 $ & 0 & 0 & $ -w_1z_3 $ & $w_1z_2-w_2z_3 $ \\
\hline
$ \partial_{z_3w_1w_2}^2 $ & 0 & $ z_2^2 $ & $ z_3w_3-w_2z_2 $ & $ -z_2w_3$ & 0 \\
\hline
$ \partial_{z_2w_1w_3}^2 $ & $z_2z_3$ & 0 & $-w_1z_3$ & 0 & $-z_2w_2$ \\
\hline
$ \partial_{z_3w_1w_3}^2 $ & 0 & $z_2z_3$ & $-w_2z_3$ & 0 & $-z_2w_3$ \\
\hline
$ \partial_{z_2z_3w_1}^2 $ & $-z_2w_3$ & $z_2w_2$ & $w_1w_3-w_2^2$ & 0 & 0 \\
\hline
$ \partial_{z_2z_3w_3}^2 $ & $w_2z_3$ & $w_1z_3$ & 0 & 0 &  $w_1w_3-w_2^2$ \\
\hline
\end{tabulary}
\caption{Coefficients of complete vector fields. For example, $\partial_{w_1w_2w_3}^2=z_3^2\frac{\partial}{\partial w_1}-z_2z_3\frac{\partial}{\partial w_2}+z_2^2\frac{\partial}{\partial w_3}$.} \label{table1}
\end{table}

We first consider the stratum of {\bf smooth generic fibers}, where we have $$ (a_3,a_4)\notin \{(0,0), (0,1)\}.$$ Notice that $ z_2=z_3=0 $ is contained in $ \mathcal{F}_{(z_5,z_6,0,1)}^3 $ and therefore $z_2$ and $z_3$ is never simultaneuously zero on any fiber in this stratum. It is enough to show that $ \mathcal{G}_{(a_3,a_4)}^2 $ is  elliptic. We see from the table that the fields $ \partial_{z_3w_1w_3}^2, \partial_{z_2w_1w_3}^2, \partial_{w_1w_2w_3}^2 $ span the tangent space $ T_{\vec{Z}_2}\mathcal{G}_{(a_3,a_4)}^2 $ for all points $ \vec{Z}_2 $ where $ z_2z_3\neq 0 $. The complement of this good set is the disjoint union of the analytic subsets $ \mathcal{A} =\left\{\vec{Z}_2; z_2=0 \right\} $ and $ \mathcal{B} =\left\{\vec{Z}_2; z_3=0 \right\}. $ From the table we see that $ \partial_{z_2w_2w_3}^2(z_2)=z_3^2 $ which is nowhere zero on $ \mathcal{A} $. Also $ \partial_{z_3w_1w_2}^2(z_3)=z_2^2 $ which is nowhere zero on $ \mathcal{B} $. By Lemma \ref{l:finitecollection} there exist finitely many complete fields from \[ \Gamma\left( \left \{\partial_{x_1x_2x_3}^2;(x_1,x_2,x_3)\in \mathcal{T}_2 \right \}\right) \] that span the tangent space $ T_{\vec{Z}_2}\mathcal{G}_{(a_3,a_4)}^2 $ for all points in the stratum. Therefore $\mathcal{G}_{(a_3,a_4)}^2 $ is elliptic. It follows that there are finitely many complete fields  from \[ \Gamma\left(\left\{\theta_{x_1x_2x_3}^3;(x_1,x_2,x_3)\in \mathcal{T}_2 \right\}\cup \left\{\phi_{x_1x_2x_3}^3;(x_1,x_2,x_3)\in \mathcal{T}_2 \right\}\cup \left \{\gamma^3\right \}\right) \] that span the tangent space $ T_{\vec{Z}_3}\mathcal{F}_{(a_1,a_2,a_3,a_4)}^3 $ for all points in the stratum. 

Now we consider the stratum of {\bf non-smooth generic fibers} $ (a_3,a_4)=(0,1) $. The two equations defining $ \mathcal{G}_{(0,1)}^2 $ can be written in matrix form as \begin{equation} 
\begin{pmatrix} w_1 & w_2 \\ w_2 & w_3 \end{pmatrix}\begin{pmatrix} z_2 \\ z_3 \end{pmatrix} = \begin{pmatrix} 0 \\ 0 \end{pmatrix}.
\end{equation} 
Recall that $ \mathcal{G}_{(0,1)}^2 $ has two irreducible components. The components are given by (see (\ref{e:a1}) and (\ref{e:a2})) \begin{equation}
A_1=\{z_2=z_3=0\}\cong \mathbb{C}_{w_1w_2w_3}^3 	
\end{equation}  and  \begin{equation} A_2=\left\{\begin{pmatrix} w_1 & w_2 \\ w_2 & w_3 \end{pmatrix}\begin{pmatrix} z_2 \\ z_3 \end{pmatrix} = \begin{pmatrix} 0 \\ 0 \end{pmatrix} \text{ and } \det \begin{pmatrix} w_1 & w_2 \\ w_2 & w_3 \end{pmatrix}=0\right\}.\end{equation} The singularity set of $ \mathcal{G}_{(0,1)}^2 $ is $ A_1\cap A_2 $. We have to show that the smooth part of $ \mathcal{G}_{(0,1)}^2 $, that is the disjoint union of $ A_1\setminus A_2 $ and  $ A_2\setminus A_1 $, is elliptic. In the proof for the smooth generic case it is shown that on the set where $z_2$ and $z_3$ are not both zero then there exists a collection of complete spanning vector fields. Since $ A_2\setminus A_1 $ is contained in that set we need only consider $ A_1\setminus A_2 $. The set $ A_1\setminus A_2 $ is biholomorphic to $ \mathbb{C}^3\setminus \{w_1w_3-w_2^2=0\} $.  The vector fields $(w_1w_3-w_2^2)\frac{\partial}{\partial w_1}$, $(w_1w_3-w_2^2)\frac{\partial}{\partial w_3}$, $2w_2\frac{\partial}{\partial w_1}+w_3\frac{\partial}{\partial w_2}$, $2w_2\frac{\partial}{\partial w_3}+w_1\frac{\partial}{\partial w_2}$   are complete on $ \mathbb{C}^3\setminus \{w_1w_3-w_2^2=0\} $ and span the tangent space in all points outside the analytic set $ A'=\{w_1=w_3=0\}\cap (A_1\setminus A_2) $. Since $w_2$ is nowhere zero on $A'$ any of the four complete fields points out of $A'$. By Lemma \ref{l:finitecollection} the proof is complete. Observe that we also have proved Propsition \ref{p:transitivity} for $L=3$. Notice that the fields $2w_2\frac{\partial}{\partial w_1}+w_3\frac{\partial}{\partial w_2}$, $2w_2\frac{\partial}{\partial w_3}+w_1\frac{\partial}{\partial w_2}$ are not in $\mathcal{Q}_3$ and this explains the difference between $L=3$ and $L\ge 4$ in Proposition \ref{p:transitivity}. See Remark \ref{r:diffL3and4}.

The stratum of {\bf non-generic fibers} is a locally trivial bundle with fibers $ \mathbb{C}^5(\cong \mathcal{F}_{(a_1,a_2,0,0)}^3) $ which is an elliptic submersion.

\section{Proof of Proposition \ref{p:mainprop}: 4 matrix factors}\label{s:proof4factors}

We begin the proof by studying the stratum of {\bf generic fibers}, $ (a_1,a_2)\neq (0,0) $. We write \begin{equation}\label{e:union4matrices}
 	\mathcal{F}_{(a_1,a_2,a_3,a_4)}^4=\bigcup_{(w_4,w_5,w_6)\in \mathbb{C}^3} \mathcal{F}_{(a_1,a_2,b_3,b_4)}^3 \end{equation} where $b_3= a_3-w_4a_1-w_5a_2$ and $b_4= a_4-w_5a_1-w_6a_2$. We need to find finitely many complete vector fields spanning $T_{\vec{Z}_4}\mathcal{F}_{(a_1,a_2,a_3,a_4)}^4$ for points $\vec{Z}_4$ in the stratum of generic fibers. Because of (\ref{e:union4matrices}) there are $b_3$ and $b_4$ so that $\vec{Z}_3\in \mathcal{F}_{(a_1,a_2,b_3,b_4)}^3$ and $\vec{Z}_4=(\vec{Z}_3,w_4,w_5,w_6)$.  We consider first the set of points in these fibers having the property that $ \mathbf{(b_3,b_4)\neq (0,0) \text{ or } (0,1)} $. Under these assumptions $\vec{Z}_4$ lies in a generic smooth fiber $\mathcal{F}_{(a_1,a_2,b_3,b_4)}^3$ and we know from Section \ref{s:proof3factors} that there is a finite collection of fields from $\mathcal{Q}_3$ which spans \[T_{\vec{Z}_4}\mathcal{F}_{(a_1,a_2,b_3,b_4)}^3\subset T_{\vec{Z}_4}\mathcal{F}_{(a_1,a_2,a_3,a_4)}^4.\] Corollary \ref{c:newdir} together with Lemma \ref{l:z2z3notzero}(3) shows that for the set defined by $z_2z_3\neq 0$ (which is a Zariski open and dense set of points of the generic fiber $\mathcal{F}_{(a_1,a_2,a_3,a_4)}^4$)  the fields  \[ \left\{\theta_{x_1x_2x_3}^{4};(x_1,x_2,x_3)\in \mathcal{T}_3 \right\}\cup \left\{\phi_{x_1x_2x_3}^{4};(x_1,x_2,x_3)\in \mathcal{T}_3 \right\} \cup \left\{\gamma^{4}\right\} \] span the new directions $w_4,w_5,w_6$. Since these new directions are complementary to \[T_{\vec{Z}_4}\mathcal{F}_{(a_1,a_2,b_3,b_4)}^3\subset T_{\vec{Z}_4}\mathcal{F}_{(a_1,a_2,a_3,a_4)}^4\] we have found finitely many complete fields spanning $T_{\vec{Z}_4}\mathcal{F}_{(a_1,a_2,a_3,a_4)}^4$ for points in a Zariski open dense set in all smooth generic fiber $\mathcal{F}_{(a_1,a_2,b_3,b_4)}^3$. Using Lemma \ref{l:finitecollection} we get finitely many complete fields spanning $T_{\vec{Z}_4}\mathcal{F}_{(a_1,a_2,a_3,a_4)}^4$ for all points in all generic fibers $\mathcal{F}_{(a_1,a_2,a_3,a_4)}^4$ with the property that $(b_3,b_4)\neq (0,0)$ or $(0,1)$.
 	Next we consider points $\vec{Z}_4$  where $ \mathbf{(b_3,b_4)=(0,1)} $, i.e, \[\vec{Z}_4\in \mathcal{F}_{(a_1,a_2,0,1)}^3 \subset \mathcal{F}_{(a_1,a_2,a_3,a_4)}^4.\] Remember that \begin{equation} \mathcal{F}_{(a_1,a_2,0,1)}^3=A_1\cup A_2 =A_1\dot{\cup} (A_2\setminus A_1)\end{equation} (see (\ref{e:a1}) and (\ref{e:a2})) where $A_1$ and $A_2$ are irreducible components. In the proof for $K=3$ we saw that there is a finite collection from $\mathcal{Q}_3$ which spans all tangent spaces \[T_{\vec{Z}_4}\mathcal{F}_{(a_1,a_2,0,1)}^3\subset T_{\vec{Z}_4}\mathcal{F}_{(a_1,a_2,a_3,a_4)}^4\] for all points in $A_2\setminus A_1$. Lemma \ref{l:z2z3notzero}(3) gives that $z_2z_3$ is not identically zero on $A_2\setminus A_1$ and as above appealing to Lemma \ref{l:finitecollection} we get spanning fields for the fiber $\mathcal{F}_{(a_1,a_2,a_3,a_4)}^4$ in all points of $A_2\setminus A_1$. Our aim is to exclude the existence of a subset of the fiber invariant under the flows of fields from $\mathcal{Q}_4$. By the reasoning above such a subset must be contained in $A_1$ or the set of points $\vec{Z}_4$ where $(b_3,b_4)=(0,0)$. Next we show that such a subset is disjoint from $A_1$. A calculation shows that \[\partial_{z_2z_3z_6}^3=(z_4w_2+z_5w_3)\frac{\partial}{\partial z_2}-(1+z_4w_1+z_5w_2)\frac{\partial}{\partial z_3}+\dots\] Therefore the complete fields \[\theta^4_{z_2z_3z_6}=a_1^2\partial^3_{z_2z_3z_6}+\dots\] and \[\phi^4_{z_2z_3z_6}=a_2^2\partial^3_{z_2z_3z_6}+\dots\] moves points out of $A_1$ (into the big orbit) unless in addition to $z_2=z_3=0$ also \begin{equation}\label{e:extracond}
 	    1+z_4w_1+z_5w_2=z_4w_2+z_5w_3=0.
 	\end{equation} Points in an invariant subset must satisfy also these equations. A calculation gives that $\partial^3_{z_4z_5z_6}=\frac{\partial}{\partial z_4}$ when $z_2=z_3=0$. Therefore the complete fields \[\theta^4_{z_4z_5z_6}=a_1^2\partial^3_{z_4z_5z_6}+\dots\]
 	and \[\phi^4_{z_4z_5z_6}=a_2^2\partial^3_{z_4z_5z_6}+\dots\] moves points out of this set since 
 	\[\theta^4_{z_4z_5z_6}(1+z_4w_1+z_5w_2)=a_1^2w_1, \]
 	\[\theta^4_{z_4z_5z_6}(z_4w_2+z_5w_3)=a_1^2w_2, \]
 	\[\phi^4_{z_4z_5z_6}(1+z_4w_1+z_5w_2)=a_2^2w_1, \]
 	\[\phi^4_{z_4z_5z_6}(z_4w_2+z_5w_3)=a_2^2w_2\] cannot all be zero, because this would contradict (\ref{e:extracond}). 
 	We now turn to points $\vec{Z}_4$ where $\mathbf{(b_3,b_4)=(0,0)}$ and again show that these points are not contained in an invariant subset and hence no such that invariant subset exists. We will find fields $\theta^4_{x_1x_2x_3}$ or $\phi^4_{x_1x_2x_3}$ such that $\mathcal{R}^{3,3}_{x_1x_2x_3}\neq 0$ or $\mathcal{R}^{3,4}_{x_1x_2x_3}\neq 0$. We begin by noticing that at points $\vec{Z}_4$ with $z_2z_3\neq 0$ we can leave the invariant set. Also $z_2=0= z_3$ cannot occur in $\mathcal{F}^3_{(a_1,a_2,0,0)}$. Two cases, $z_2\neq 0 = z_3$ and $z_2=0\neq z_3$, remains. Assume first that $\mathbf{z_2\neq 0 = z_3}$ (and $\mathbf{b_3=b_4=0}$). Here we begin by choosing the triple $(z_3,w_1,w_2)$. Since $\mathcal{R}^{3,3}_{z_3w_1w_2}=-z_2^2z_4$ and $\mathcal{R}^{3,4}_{z_3w_1w_2}=z_2^2z_5$ we move out of $\mathcal{F}^3_{(a_1,a_2,0,0)}$ unless $z_4=z_5=0$. Assuming in addition that $z_4=z_5=0$ we choose the triple $(z_2,z_3,w_1)$. For such points $\mathcal{R}^{3,4}_{z_2z_3w_1}=z_2+z_2w_3z_6$ (and  $\mathcal{R}^{3,3}_{z_2z_3w_1}=0$) so if $1+w_3z_6\neq 0$ we move out of $\mathcal{F}^3_{(a_1,a_2,0,0)}$. Choose $(z_2,z_3,z_6)$. Notice that  \[\theta^4_{z_2z_3z_6}=a_1^2\frac{\partial}{\partial z_6}\]
 	and \[\phi^4_{z_2z_3z_6}=a_2^2\frac{\partial}{\partial z_6}\] at these points and $\theta^4_{z_2z_3z_6}(1+w_3z_6)=a_1^2w_3$, $\phi^4_{z_2z_3z_6}(1+w_3z_6)=a_2^2w_3$ which both cannot be zero since $1+w_3z_6 = 0$ implies $w_3\neq 0$ and we assume that $(a_1,a_2)\neq (0,0)$.
 	
 	Now assume that $\mathbf{z_2 = 0 \neq z_3}$ (and also $\mathbf{b_3=b_4=0}$) and choose the triple $(z_2,w_2,w_3)$. Since $\mathcal{R}^{3,3}_{z_2w_2w_3}=z_3^2z_5$ and $\mathcal{R}^{3,4}_{z_2w_2w_3}=-z_3^2z_6$ we move out of $\mathcal{F}^3_{(a_1,a_2,0,0)}$ unless $z_5=z_6=0$. Assuming in addition that $z_5=z_6=0$ we choose the triple $(z_2,z_3,w_3)$. For such points $\mathcal{R}^{3,3}_{z_2z_3w_3}=z_3+z_3w_1z_4$ (and  $\mathcal{R}^{3,4}_{z_2z_3w_3}=0$) so if $1+w_1z_4\neq 0$ we move out of $\mathcal{F}^3_{(a_1,a_2,0,0)}$. Therefore assume also that $1+w_1z_4=0$ Choose $(z_2,z_3,z_4)$. Notice that  \[\theta^4_{z_2z_3z_4}=a_1^2\frac{\partial}{\partial z_4}\]
 	and \[\phi^4_{z_2z_3z_4}=a_2^2\frac{\partial}{\partial z_4}\] at these points and $\theta^4_{z_2z_3z_4}(1+w_1z_4)=a_1^2w_1$, $\phi^4_{z_2z_3z_4}(1+w_1z_4)=a_2^2w_1$ which both cannot be zero since $1+w_1z_4 = 0$ implies $w_1\neq 0$ and we assume that $(a_1,a_2)\neq (0,0)$.
 	This let us conclude that there is no invariant subset with respect to $\mathcal{Q}_4$ and we have handled the stratum of generic fibers. Note that this proves Proposition \ref{p:transitivity} for $K=4$.
 	
 	We need to study the stratum of {\bf non-generic fibers}. This stratum consists of those fibers where $\mathbf{a_1=a_2=0}$. We notice that these fibers satisfy \[ \mathcal{F}^4_{(0,0,a_3,a_4)}=\mathcal{F}^3_{(0,0,a_3,a_4)}\times \mathbb{C}^3 \] and since $\mathcal{F}^3_{(0,0,a_3,a_4)}$ is elliptic we have proven Proposition \ref{p:mainprop} for $K=4$.

\section{Proof of Proposition \ref{p:mainprop}: 5 matrix factors}\label{s:proof5factors}

We assume that $K=4$ and we have seen that the submersions $\Phi_L=\pi_4 \circ \Psi_L$ are stratified elliptic submersions when $3\le L \le 4$ and that Proposition \ref{p:transitivity} is true when $3\le L \le 4$. 

Study \begin{equation}\label{e:union5}
\mathcal{F}_{(a_1,a_2,a_3,a_4)}^{5}=\bigcup_{(z_{7},z_{8},z_{9})\in \mathbb{C}^3} \mathcal{F}_{(b_1,b_2,a_3,a_4)}^{4} 
\end{equation} 
where $b_1=a_1-z_{7}a_3-z_{8}a_4$ and $b_2=a_2-z_{8}a_3-z_{9}a_4$. Let $\vec{Z}_{5}\in \mathcal{F}_{(a_1,a_2,a_3,a_4)}^{5}$. Because of (\ref{e:union5}) there are $b_1$ and $b_2$ so that $\vec{Z}_{4}\in \mathcal{F}_{(b_1,b_2,a_3,a_4)}^{4}$ and \(\vec{Z}_{5}=(\vec{Z}_{4},z_{7},z_{8},z_{9}).\) 

First we study the stratum of {\bf smooth generic fibers}. Fibers in this stratum are those satisfying $\mathbf{(a_3,a_4)\not\in  \{(0,0),(0,1)\} }$. First notice that if $\mathbf{(b_1,b_2)\neq (0,0)}$ then \(\mathcal{F}_{(b_1,b_2,a_3,a_4)}^{4}\) is a generic smooth fiber for $\Phi_{4}$ and as above Proposition \ref{p:transitivity} for $L=4$, Corollary \ref{c:newdir} and Lemma \ref{l:z2z3notzero}(2) shows that for these points we have spanning fields. If $\mathbf{(b_1,b_2)=(0,0)}$ then \(\mathcal{F}_{(0,0,a_3,a_4)}^{4}\) is a non-generic smooth fiber for $\Phi_{4}$ and \[\mathcal{F}_{(0,0,a_3,a_4)}^{4}=\mathcal{F}_{(0,0,a_3,a_4)}^{3}\times \mathbb{C}^3.\] Since we assume that $(a_3,a_4)\neq (0,1)$ ( $(a_3,a_4)\neq (0,0)$ is automatic in this case) we know by Corollary \ref{c:newdir}, Lemma \ref{l:z2z3notzero}(3) and Proposition \ref{p:transitivity} that we have spanning fields. This also shows that Proposition \ref{p:transitivity} holds for these fibers when $L=5$.

We now study the stratum of {\bf singular generic fibers}. Here $\mathbf{(a_3,a_4)=(0,1)}$. Again notice that when $\mathbf{(b_1,b_2)\neq (0,0)}$ then \[\mathcal{F}_{(b_1,b_2,0,1)}^{4}\] is a generic smooth fiber for $\Phi_{4}$ and Proposition \ref{p:transitivity} (for $L=4$), Corollary \ref{c:newdir} and Lemma \ref{l:z2z3notzero} shows that for these points we have spanning fields as above. Next we study the case $\mathbf{(b_1,b_2)=(0,0)}$. In this case we see that \[\mathcal{F}_{(0,0,0,1)}^{4} \cong \mathcal{F}_{(0,0,0,1)}^{3}\times \mathbb{C}^3.\] We write, as in Section \ref{s:proof3factors}, \[ \mathcal{F}_{(0,0,0,1)}^{3}=A_1\cup A_2.\] In $A_2\setminus A_1$ we can use the argument as in the smooth generic case in Section \ref{s:proof3factors}: $z_2z_3\not\equiv 0$ and  $\partial_{z_2w_2w_3}^2(z_2)=z_3^2$ makes it possible to leave the set where $z_2=0$ and
$ \partial_{z_3w_1w_2}^2(z_3)=z_2^2 $ makes it possible to leave the set where $z_3=0$.

Now we need to deal with points in $A_1\times \mathbb{C}^3\subset \mathcal{F}^4_{(0,0,0,1)}$. Because of the inclusion we find $z_5=z_6=0$. Define $C=\{z_2=z_3=z_5=z_6=0\} \subset \mathcal{F}^4_{(0,0,0,1)} \subset \mathcal{F}^5_{(0,0,0,1)}$ which contains the set of singularities  \[ \operatorname{Sing}\left(\mathcal{F}^5_{(0,0,0,1)}\right)=C\cap \left\{\operatorname{Rank} \begin{pmatrix}
w_1 & w_2 & w_4 & w_5 \\ w_2 & w_3 & w_5 & w_6
\end{pmatrix}<2\right\} \] In order to prove Proposition \ref{p:transitivity} and Proposition \ref{p:mainprop} we need to show that fields from $\mathcal{Q}_5$ move out from $C\setminus \operatorname{Sing}\left(\mathcal{F}^5_{(0,0,0,1)}\right)$. Calculating the partial derivatives of $P_1^4,\dots, P_4^4$ in points of \( C \) we find that the ones that are non-zero are those listed in Table 3. \begin{table} 
\renewcommand*{\arraystretch}{1.5}
\begin{tabulary}{0.9\textwidth}{|C|C|C|C|C|}
\hline
 $ \phantom{a}$ & $\partial / \partial z_2 $ & $\partial / \partial z_3 $ & $\partial / \partial z_5$  & $\partial / \partial z_6 $ \\
\hline
$ P_1^4 $ & $1+w_1z_4$ & $w_3z_4$ & 1 & 0 \\
\hline
$ P_2^4 $ & 0 & 1 & 0 & 1 \\
\hline
$ P_3^4 $ & $w_1+w_4+w_1w_4z_4$ & $w_2+w_5+w_2w_4z_4$ & $w_4$ & $w_5$ \\
\hline
$ P_4^4 $ & $w_2+w_5+w_1w_5z_4$ & $w_3+w_6+w_2w_5z_4$ & $w_5$ & $w_6$ \\
\hline

\end{tabulary}
\caption{The non-zero partial derivatives of $P_1^4, P_2^4, P_3^4$, and $P_4^4$.} \label{table3}
\end{table} We examine the complete field $\phi_{z_2z_3z_6}^5$. This field has some complicated components which on $C$ take the form  \[ \phi_{z_2z_3z_6}^5 = \mathcal{D}_1\frac{\partial}{\partial z_2}+\mathcal{D}_2\frac{\partial}{\partial z_3}+\mathcal{D}_3\frac{\partial}{\partial z_6}+\dots \] where \[ \mathcal{D}_1 = \operatorname{det}\begin{pmatrix} w_2+w_5+w_2w_4z_4 & w_5 \\ w_3+w_6+w_2w_5z_4 & w_6 \end{pmatrix}, \] \[ \mathcal{D}_2 = \operatorname{det}\begin{pmatrix} w_1+w_4+w_1w_4z_4 & w_5 \\ w_2+w_5+w_1w_5z_4 & w_6 \end{pmatrix}, \] and \[ \mathcal{D}_3 = \operatorname{det} \begin{pmatrix} w_1+w_4+w_1w_4z_4 & w_2+w_5+w_2w_4z_4 \\ w_2+w_5+w_1w_5z_4 & w_3+w_6+w_2w_5z_4 \end{pmatrix}. \] Whenever at least one of \( \mathcal{D}_1 \), \( \mathcal{D}_2 \) or \( \mathcal{D}_3 \) is non-zero we can move out of \( C \).  Suppose we are in a point of $C\setminus \operatorname{Sing}\left(\mathcal{F}^5_{(0,0,0,1)}\right)$ where \( \mathcal{D}_1=\mathcal{D}_2=\mathcal{D}_3=0 \). Observe that \[ \begin{aligned} \operatorname{Rank} & \begin{pmatrix} w_1 & w_2 & w_4 & w_5 \\ w_2 & w_3 & w_5 & w_6  \end{pmatrix} = \\ & = \operatorname{Rank}  \begin{pmatrix} w_1+w_4+w_1w_4z_4 & w_2+w_5+w_2w_4z_4 & w_4 & w_5 \\ w_2+w_5+w_1w_5z_4 & w_3+w_6+w_2w_5z_4 & w_5 & w_6  \end{pmatrix} \end{aligned} \] (in this case it is 2) since \[ \begin{pmatrix}
 w_1+w_4+w_1w_4z_4 \\ w_2+w_5+w_1w_5z_4 \end{pmatrix}=\begin{pmatrix}
 w_1 \\ w_2
 \end{pmatrix} + (1+w_1z_4) \begin{pmatrix}
 w_4 \\ w_5
 \end{pmatrix}
  \] and \[ \begin{pmatrix}
 w_2+w_5+w_2w_4z_4 \\ w_3+w_6+w_2w_5z_4 \end{pmatrix}=\begin{pmatrix}
 w_2 \\ w_3
 \end{pmatrix} + \begin{pmatrix}
 w_5 \\ w_6
 \end{pmatrix}  + w_2z_4 \begin{pmatrix}
 w_4 \\ w_5
 \end{pmatrix}.
  \] The fact that \( \mathcal{D}_1=\mathcal{D}_2=\mathcal{D}_3=0 \) means that the rank drops when we remove the third column from these matrices. This implies that the third column is non-zero and the other columns are multiples of a non-zero vector \( v \) which moreover is linearly independent of the third column. Now we use the field \( \gamma^3 \) (see (\ref{e:gammaodd})) to show that the set  \[ I=C\setminus \operatorname{Sing}(\mathcal{F}^5_{(0,0,0,1)})\cap \{\mathcal{D}_1=\mathcal{D}_2=\mathcal{D}_3=0\}  \]  does not contain an invariant subset under fields from \( \mathcal{Q}_5 \). In the points that we are considering $\gamma^3=\frac{\partial}{\partial z_4}$. We consider two cases. 
  {\bf Case 1: $(w_5,w_6)\neq (0,0)$} In this case \[ \det \begin{pmatrix}
  w_4 & w_5 \\ w_5 & w_6
  \end{pmatrix}\neq 0.  \] We have \[ \gamma^3(\mathcal{D}_1)=w_2\det \begin{pmatrix}
  w_4 & w_5 \\ w_5 & w_6
  \end{pmatrix} \] Thus \( \gamma^3 \) moves points out of \( I \) unless \( w_2=0 \). Looking at \[ \gamma^3(\mathcal{D}_2)=w_1\det \begin{pmatrix}
  w_4 & w_5 \\ w_5 & w_6
  \end{pmatrix} \] we see that $w_1=0$ for $I$ to be invariant. Assuming in addition $w_1=w_2=0$ we find that \[ \mathcal{D}_2=\det \begin{pmatrix}
  w_4 & w_5 \\ w_5 & w_6
  \end{pmatrix} \] which is a contradiction since $\mathcal{D}_2=0$ on $I$. {\bf Case 2: $(w_5,w_6) = (0,0)$}. This implies \( w_4\neq 0 \). On these assumptions \[ \mathcal{D}_3=(w_1w_3-w_2^2)(1+z_4w_4)+w_3w_4 \] and \[ \gamma^3(\mathcal{D}_3)=(w_1w_3-w_2^2)w_4. \] Now \( \gamma^3(\mathcal{D}_3)=0 \) implies that $w_1w_3-w_2^2=0$ which in combination with  \( \mathcal{D}_3=0 \) implies that $w_3=0$. This in turn gives $w_2=0$ and \[ \begin{pmatrix}
    w_1 & w_2 & w_{4} & w_{5} \\ w_2 & w_3 & w_{5} & w_{6}
    \end{pmatrix} = \begin{pmatrix}
    w_1 & 0 & w_{4} & 0 \\ 0 & 0 & 0 & 0
    \end{pmatrix} \] which contradicts the assumption that \[ \operatorname{Rank}\begin{pmatrix}
    w_1 & w_2 & w_{4} & w_{5} \\ w_2 & w_3 & w_{5} & w_{6}
    \end{pmatrix} = 2. \]
 
Finally we study the stratum of {\bf non-generic fibers}, that is $\mathbf{(a_3,a_4)=(0,0)}$. Here all fibers are smooth. Also \[\mathcal{F}_{(a_1,a_2,0,0)}^{5}=\mathcal{F}_{(a_1,a_2,0,0)}^{4}\times \mathbb{C}^3\] and since $\mathcal{F}_{(a_1,a_2,0,0)}^{4}$ is elliptic we are done.

\section{Proof of Proposition \ref{p:mainprop}: Induction steps}\label{s:induction}
Recall the description of the stratification for the submersion $\Phi_M = \pi_4 \circ \Psi_M$ given in Section \ref{s:stratification}. When $M$ is {\bf odd} we have the following strata:
\begin{itemize}  
\item The strata of {\bf generic fibers}: When $ (a_3,a_4) \neq (0,0) $. The fibers are graphs over $ \mathcal{G}^{M-1}_{(a_3,a_4)} \times \mathbb{C} $. This set is divided into two strata as follows:
\begin{itemize}
\item   Smooth generic fibers:  When $ (a_3,a_4) \neq (0,1) $ then the fibers are smooth.
\item  Singular generic fibers: When $ (a_3,a_4) = (0,1) $ then the fibers are non-smooth.
\end{itemize} 
\item The stratum of {\bf non-generic fibers}: When $ (a_3,a_4) = (0,0) $ the fibers are $\mathcal{F}^M_{(a_1,a_2,0,0)}=\mathcal{F}^{M-1}_{(a_1,a_2,0,0)} \times \mathbb{C}^3  $. Moreover the fibers are smooth.
 \end{itemize} 

When $M$ is {\bf even} we have the following strata:
 \begin{itemize}  
\item The stratum of {\bf generic fibers}: When $ (a_1,a_2) \neq (0,0) $. The fibers are graphs over $ \mathcal{H}^{M-1}_{(a_1,a_2)} \times \mathbb{C} $. Moreover the fibers are smooth. 
\item The strata of {\bf non-generic fibers}: When $ (a_1,a_2) = (0,0) $ the fibers are $\mathcal{F}^M_{(0,0,a_3,a_4)}=\mathcal{F}^{M-1}_{(0,0,a_3,a_4)} \times \mathbb{C}^3  $. This set is divided into two strata as follows:
\begin{itemize}
\item   Smooth non-generic fibers:  When $ (a_3,a_4) \neq (0,1) $ then the fibers are smooth.
\item  Singular non-generic fibers: When $ (a_3,a_4) = (0,1) $ then the fibers are non-smooth.
\end{itemize} 
 \end{itemize}

We will now complete the proof by doing the induction steps necessary.

\subsection{Even number of factors}\label{ss:evenK}
We begin by showing that the stratified submersion is elliptic when the number of matrix factors is even. This case is easier than the case when the number of factors is odd which we will deal with in subsection \ref{ss:oddK}. Assume that $K=2k-1\ge 5$ and that the submersions $\Phi_L=\pi_4 \circ \Psi_L$ are stratified elliptic submersions when $3\le L \le K$ and that Propostion \ref{p:transitivity} is true when $3\le L \le K$. 

Study \begin{equation}\label{e:unioneven}
\mathcal{F}_{(a_1,a_2,a_3,a_4)}^{K+1}=\mathcal{F}_{(a_1,a_2,a_3,a_4)}^{2k}=\bigcup_{(w_{3k-2},w_{3k-1},w_{3k})\in \mathbb{C}^3} \mathcal{F}_{(a_1,a_2,b_3,b_4)}^{2k-1} 
\end{equation} 
where $b_3=a_3-w_{3k-2}a_1-w_{3k-1}a_2$ and $b_4=a_4-w_{3k-1}a_1-w_{3k}a_2$. That is we use the new group of variables $w_{3k-2},w_{3k-1}$ and $w_{3k}$ to present $\mathcal{F}_{(a_1,a_2,a_3,a_4)}^{2k}$ as a fibration over $\mathbb{C}^3$ with fibers $\mathcal{F}^{2k-1}$. 

Let us describe the strategy similar to the case of 4 and 5 matrix factors. We like to use Proposition \ref{p:transitivity} for $K=2k-1$ which gives us complete fields that span along that fibration. Next we want to find complete fields among those that are tangential to $\mathcal{F}_{(a_1,a_2,a_3,a_4)}^{2k}$ that also are transversal to the fibers in the fibration. We will appeal to Corollary \ref{c:newdir} and Lemma \ref{l:z2z3notzero}(1) to find these fields. Taken together this will show that a subset $A$ in the fiber $\mathcal{F}_{(a_1,a_2,a_3,a_4)}^{2k}$ that is invariant with respect to vector fields from $\mathcal{Q}_{2k}$ must be contained in the union of non-generic fibers $\mathcal{F}^{2k-1}$ and singular points of generic fibers $\mathcal{F}^{2k-1}$. Call this union $\mathcal{U}_{(a_1,a_2,a_3,a_4)}^{2k}$. Our aim will then be to show that there cannot exist such an invariant set $A$ by showing that every point in $\mathcal{U}_{(a_1,a_2,a_3,a_4)}^{2k}$ can be moved into $\mathcal{F}_{(a_1,a_2,a_3,a_4)}^{2k}\setminus \mathcal{U}_{(a_1,a_2,a_3,a_4)}^{2k}$ by vector fields in $\mathcal{Q}_{2k}$. 

We now take care of the details. Because of (\ref{e:unioneven}) there are $b_3$ and $b_4$ so that $\vec{Z}_{2k-1}\in \mathcal{F}_{(a_1,a_2,b_3,b_4)}^{2k-1}$ and \[\vec{Z}_{2k}=(\vec{Z}_{2k-1},w_{3k-2},w_{3k-1},w_{3k}).\]  We begin by studying the stratum of {\bf generic fibers}, that is $\mathbf{(a_1,a_2)\neq (0,0)}$. 
For points where $\mathbf{(b_3,b_4) \notin \{(0,0),(0,1)\}}$ then $\mathcal{F}_{(a_1,a_2,b_3,b_4)}^{2k-1}$ is a smooth generic fiber for the submersion $\Phi_{2k-1}$ and Propostion \ref{p:transitivity} (for $L=2k-1$) together with Corollary \ref{c:newdir} and Lemma \ref{l:z2z3notzero}(1) let us conclude that we have complete vector fields spanning the tangent space of $\mathcal{F}_{(a_1,a_2,a_3,a_4)}^{2k}$ at these points. For points where $\mathbf{(b_3,b_4)=(0,0)}$ we have \[\mathcal{F}^{2k-1}_{(a_1,a_2,0,0)}=\mathcal{F}^{2k-2}_{(a_1,a_2,0,0)} \times \mathbb{C}^3\] and Proposition \ref{p:transitivity} (for $L=2k-2$ applied to the first factor) together with Corollary \ref{c:newdir} and Lemma \ref{l:z2z3notzero}(1) (Lemma \ref{l:z2z3notzero}(2) when $2k-2=4$) shows that we have spanning fields in these points. When $\mathbf{(b_3,b_4)=(0,1)}$ then $\mathcal{F}^{2k-1}_{(a_1,a_2,0,1)}$ is a singular generic fiber for $\Phi_{2k-1}$ and at smooth points of the fiber we have complete spanning fields
by Proposition \ref{p:transitivity} (for $L=2k-1$), Corollary \ref{c:newdir} and Lemma \ref{l:z2z3notzero}. It remains to study \[\vec{Z}_{2k-1}\in \operatorname{Sing}\left(\mathcal{F}^{2k-1}_{(a_1,a_2,0,1)}\right)\] which is given by \begin{equation} \label{e:condonZs}
    z_2=z_3=z_5=z_6=\dots =z_{3k-4}=z_{3k-3}=0
\end{equation} and \begin{equation}\label{e:rank}
    \operatorname{Rank}\begin{pmatrix}
    w_1 & w_2 & \dots & w_{3k-5} & w_{3k-4} \\ w_2 & w_3 & \dots & w_{3k-4} & w_{3k-3}
    \end{pmatrix} < 2.
\end{equation} A calculation assuming (\ref{e:condonZs}) shows that \[ \begin{aligned} \partial_{z_{3k-4}z_{3k-3}z_{3k}}^{2k-1}&=(z_{3k-2}w_{3k-4}+z_{3k-1}w_{3k-3})\frac{\partial}{\partial z_{3k-4}}-\\&-(1+z_{3k-2}w_{3k-5}+z_{3k-1}w_{3k-4})\frac{\partial}{\partial z_{3k-3}}+\dots \end{aligned} \] Therefore the complete fields \[\theta^{2k}_{z_{3k-4}z_{3k-3}z_{3k}}=a_1^2\partial^{2k-1}_{z_{3k-4}z_{3k-3}z_{3k}}+\dots\] and \[\phi^{2k}_{z_{3k-4}z_{3k-3}z_{3k}}=a_2^2\partial^{2k-1}_{z_{3k-4}z_{3k-3}z_{3k}}+\dots\] move points out of $\operatorname{Sing}\left( \mathcal{F}^{2k-1}_{(a_!,a_2,0,1)}\right)$ (into the big orbit) unless in addition to
(\ref{e:condonZs}) and (\ref{e:rank}) also \begin{equation}\label{e:extracondeven}
 	    z_{3k-2}w_{3k-4}+z_{3k-1}w_{3k-3}=1+z_{3k-2}w_{3k-5}+z_{3k-1}w_{3k-4}=0.
 	\end{equation} Points in an invariant subset must satisfy also these equations. A calculation assuming (\ref{e:condonZs}) gives that $\partial^{2k-1}_{z_{3k-2}z_{3k-1}z_{3k}}=\frac{\partial}{\partial z_{3k-2}}$. Therefore the complete fields \[\theta^{2k}_{z_{3k-2}z_{3k-1}z_{3k}}=a_1^2\partial^{2k-1}_{z_{3k-2}z_{3k-1}z_{3k}}+\dots\]
 	and \[\theta^{2k}_{z_{3k-2}z_{3k-1}z_{3k}}=a_2^2\partial^{2k-1}_{z_{3k-2}z_{3k-1}z_{3k}}+\dots\] move points out of this set since 
 	
 	\[\theta^{2k}_{z_{3k-2}z_{3k-1}z_{3k}}(1+z_{3k-2}w_{3k-5}+z_{3k-1}w_{3k-4})=a_1^2w_{3k-5}, \]
 	\[\theta^{2k}_{z_{3k-2}z_{3k-1}z_{3k}}(z_{3k-2}w_{3k-4}+z_{3k-1}w_{3k-3})=a_1^2w_{3k-4}, \]
 	\[\phi^{2k}_{z_{3k-2}z_{3k-1}z_{3k}}(1+z_{3k-2}w_{3k-5}+z_{3k-1}w_{3k-4})=a_2^2w_{3k-5}, \]
 	 \[\phi^{2k}_{z_{3k-2}z_{3k-1}z_{3k}}(z_{3k-2}w_{3k-4}+z_{3k-1}w_{3k-3})=a_2^2w_{3k-4}, \]
 	cannot all be zero, because this would contradict (\ref{e:extracondeven}). 
 Notice that this proves Proposition \ref{p:transitivity} for $L=2k$. 

Now we study the stratum of {\bf non-generic fibers}, that is $\mathbf{a_1=a_2=0}$. In this case we know that \[\mathcal{F}^{2k}_{(0,0,a_3,a_4)}= \mathcal{F}^{2k-1}_{(0,0,a_3,a_4)}\times \mathbb{C}^3\] and by the induction assumption we are done. This finishes the induction step for an even number of factors.


\subsection{Odd number of factors}\label{ss:oddK}

We assume that $K=2k\ge 6$ and that the submersions $\Phi_L=\pi_4 \circ \Psi_L$ are stratified elliptic submersions when $3\le L \le K$ and that Propostion \ref{p:transitivity} is true when $3\le L \le K$. 

Study \begin{equation}\label{e:unionodd}
\mathcal{F}_{(a_1,a_2,a_3,a_4)}^{K+1}=\mathcal{F}_{(a_1,a_2,a_3,a_4)}^{2k+1}=\bigcup_{(z_{3k+1},z_{3k+2},z_{3k+3})\in \mathbb{C}^3} \mathcal{F}_{(b_1,b_2,a_3,a_4)}^{2k} 
\end{equation} 
where $b_1=a_1-z_{3k+1}a_3-z_{3k+2}a_4$ and $b_2=a_2-z_{3k+2}a_3-z_{3k+3}a_4$. Let $\vec{Z}_{2k+1}\in \mathcal{F}_{(a_1,a_2,a_3,a_4)}^{2k+1}$. Because of (\ref{e:unionodd}) there are $b_1$ and $b_2$ so that $\vec{Z}_{2k}\in \mathcal{F}_{(b_1,b_2,a_3,a_4)}^{2k}$ and \[\vec{Z}_{2k+1}=(\vec{Z}_{2k},z_{3k+1},z_{3k+2},z_{3k+3}).\] 

Begin with the stratum of {\bf smooth generic fibers}, that is $$\mathbf{(a_3,a_4)\not\in  \{(0,0),(0,1)\} }$$. First notice that if $\mathbf{(b_1,b_2)\neq (0,0)}$ then \(\mathcal{F}_{(b_1,b_2,a_3,a_4)}^{2k}\) is a generic smooth fiber for $\Phi_{2k}$ and as above Proposition \ref{p:transitivity} (for $L=2k$), Corollary \ref{c:newdir} and Lemma \ref{l:z2z3notzero} shows that for these points we have spanning fields. If $\mathbf{(b_1,b_2)=(0,0)}$ then \[\mathcal{F}_{(0,0,a_3,a_4)}^{2k} \cong \mathbb{C}^3\times \mathcal{F}_{(0,0,a_3,a_4)}^{2k-1} \] is a non-generic smooth fiber for $\Phi_{2k}$ and since \( \mathcal{F}_{(0,0,a_3,a_4)}^{2k-1} \) is a generic smooth fiber Proposition \ref{p:transitivity} (for $L=2k-1$), Corollary \ref{c:newdir} and Lemma \ref{l:z2z3notzero} shows that for these points we have spanning fields.

We now study the stratum of {\bf singular generic fibers}. Here $\mathbf{(a_3,a_4)=(0,1)}$. Again notice that when $\mathbf{(b_1,b_2)\neq (0,0)}$ then \(\mathcal{F}_{(b_1,b_2,0,1)}^{2k}\) is a generic smooth fiber for $\Phi_{2k}$ and Proposition \ref{p:transitivity} (for $L=2k$), Corollary \ref{c:newdir} and Lemma \ref{l:z2z3notzero} shows that for these points we have spanning fields as above. Next we study the case $\mathbf{(b_1,b_2)=(0,0)}$. In this case we see that \(\mathcal{F}_{(0,0,0,1)}^{2k}\) is a singular non-generic fiber of $\Phi_{2k}$ and \[ \mathcal{F}_{(0,0,0,1)}^{2k}\cong \mathcal{F}_{(0,0,0,1)}^{2k-1}\times \mathbb{C}^3_{w_{3k-2}w_{3k-1}w_{3k}} .\] The smooth points of \( \mathcal{F}_{(0,0,0,1)}^{2k-1} \) (which is generic) are handled using Proposition \ref{p:transitivity} (for $L=2k-1$), Corollary \ref{c:newdir} and Lemma \ref{l:z2z3notzero}. We have the following chain of inclusions \begin{equation*}
    \begin{aligned}
    \mathcal{F}_{(a_1,a_2,0,1)}^{2k-1}\supset \mathcal{F}_{(0,0,0,1)}^{2k} = \\ = \mathcal{F}_{(0,0,0,1)}^{2k-1}\times \mathbb{C}^3 \supset \operatorname{Sing}(\mathcal{F}_{(0,0,0,1)}^{2k})\times \mathbb{C}^3 \supset \\ \supset \operatorname{Sing}(\mathcal{F}_{(a_1,a_2,0,1)}^{2k+1})  
    \end{aligned}
\end{equation*} By the arguments above any possible invariant subset must be contained in  \[ J= (\operatorname{Sing}(\mathcal{F}_{(0,0,0,1)}^{2k})\times \mathbb{C}^3) \setminus  \operatorname{Sing}(\mathcal{F}_{(a_1,a_2,0,1)}^{2k+1}) \] Points in \( J \) are characterized by \( z_2=z_3= \dots = z_{3k-4}=z_{3k-3}=z_{3k-1}=z_{3k}=0  \),  \[ \operatorname{Rank}\begin{pmatrix}
w_1 & w_2 & \dots & w_{3k-5} & w_{3k-4} \\ w_2 & w_3 & \dots & w_{3k-4} & w_{3k-3}
\end{pmatrix} < 2 \] and \[ \operatorname{Rank}\begin{pmatrix}
w_1 & w_2 & \dots & w_{3k-5} & w_{3k-4} & w_{3k-2} & w_{3k-1} \\ w_2 & w_3 & \dots & w_{3k-4} & w_{3k-3} & w_{3k-1} & w_{3k}
\end{pmatrix} = 2. \] Take the largest \( l<k \) such that \[ \operatorname{Rank}\begin{pmatrix}
 w_{3l-2} & w_{3l-1} \\  w_{3l-1} & w_{3l}
\end{pmatrix} =1. \] Let \( \hat{Z}= \sum_{j=l+1}^k z_{3j-2} \). We examine the complete field $\phi_{z_{3l-1}z_{3l}z_{3k}}^{2k+1}$.  This field has some complicated components which on $J$ take the form  \[ \phi_{z_{3l-1}z_{3l}z_{3k}}^{2k+1} = \mathcal{D}_1\frac{\partial}{\partial z_{3l-1}}+\mathcal{D}_2\frac{\partial}{\partial z_{3l}}+\mathcal{D}_3\frac{\partial}{\partial z_{3k}}+\dots \] where \[ \mathcal{D}_1 = \operatorname{det} \begin{pmatrix} w_{3l-1}+w_{3k-1}+w_{3l-1}w_{3k-2}\hat{Z} & w_{3k-1} \\ w_{3l}+w_{3k}+w_{3l-1}w_{3k-1}\hat{Z} & w_{3k} \end{pmatrix}, \] \[ \mathcal{D}_2 = \operatorname{det} \begin{pmatrix} w_{3l-2}+w_{3k-2}+w_{3l-2}w_{3k-2}\hat{Z} & w_{3k-1} \\ w_{3l-1}+w_{3k-1}+w_{3l-2}w_{3k-1}\hat{Z} & w_{3k} \end{pmatrix}, \] and

\[ \mathcal{D}_3 = \operatorname{det}\begin{pmatrix} w_{3l-2}+w_{3k-2}+w_{3l-2}w_{3k-2}\hat{Z} & w_{3l-1}+w_{3k-1}+w_{3l-1}w_{3k-2}\hat{Z} \\ w_{3l-1}+w_{3k-1}+w_{3l-2}w_{3k-1}\hat{Z} & w_{3l}+w_{3k}+w_{3l-1}w_{3k-1}\hat{Z} \end{pmatrix}. \]

Whenever at least one of \( \mathcal{D}_1 \), \( \mathcal{D}_2 \) or \( \mathcal{D}_3 \) is non-zero we can move out of \( J \).  Now suppose we are in a point of $J$ where \( \mathcal{D}_1=\mathcal{D}_2=\mathcal{D}_3=0 \).

  Let \begin{equation*} \begin{aligned} &\mathcal{C}=\\&=
   \begin{pmatrix} w_{3l-2}+w_{3k-2}+w_{3l-2}w_{3k-2}\hat{Z} & w_{3l-1}+w_{3k-1}+w_{3l-1}w_{3k-2}\hat{Z} & w_{3k-2} & w_{3k-1} \\ w_{3l-1}+w_{3k-1}+w_{3l-2}w_{3k-1}\hat{Z} & w_{3l}+w_{3k}+w_{3l-1}w_{3k-1}\hat{Z} & w_{3k-1} & w_{3k}  \end{pmatrix}\end{aligned}
  \end{equation*} and observe that \begin{equation*} 
2=\operatorname{Rank} \begin{pmatrix} w_{3l-2} & w_{3l-1} & w_{3k-2} & w_{3k-1} \\ w_{3l-1} & w_{3l} & w_{3k-1} & w_{3k}  \end{pmatrix} = \operatorname{Rank} \mathcal{C}     
   \end{equation*}   by column operations. 
   
The fact that \( \mathcal{D}_1=\mathcal{D}_2=\mathcal{D}_3=0 \) means that the rank drops when we remove the third column from these matrices. This implies that the third column is non-zero and the other columns are multiples of a non-zero vector \( v \) which moreover is linearly independent of the third column. Now we use the field \( \gamma^{3l} \) (see (\ref{e:gammaodd}) or (\ref{e:gammaeven})) to show that the set  \[ I=J\cap \{\mathcal{D}_1=\mathcal{D}_2=\mathcal{D}_3=0\}  \]  does not contain an invariant subset under fields from \( \mathcal{Q}_{2k+1} \). In the points that we are considering $\gamma^{3l}=\frac{\partial}{\partial z_{3l+1}}$. We consider two cases. 
  {\bf Case 1: $(w_{3k-1},w_{3k})\neq (0,0)$} In this case \[ \det \begin{pmatrix}
  w_{3k-2} & w_{3k-1} \\ w_{3k-1} & w_{3k}
  \end{pmatrix}\neq 0.  \] 
  
  We have \[ \gamma^{3l}(\mathcal{D}_1)=w_{3l-1}\det \begin{pmatrix}
  w_{3k-2} & w_{3k-1} \\ w_{3k-1} & w_{3k}
  \end{pmatrix} \] Thus \( \gamma^{3l} \) moves points out of \( I \) unless \( w_{3l-1}=0 \). Looking at \[ \gamma^{3l}(\mathcal{D}_2)=w_{3l-2}\det \begin{pmatrix}
  w_{3k-2} & w_{3k-1} \\ w_{3k-1} & w_{3k}
  \end{pmatrix} \] we see that $w_{3l-2}=0$ for $I$ to be invariant. Assuming in addition $w_{3l-2}=w_{3l-1}=0$ we find that \[ \mathcal{D}_2=\det \begin{pmatrix}
  w_{3k-2} & w_{3k-1} \\ w_{3k-1} & w_{3k} 
  \end{pmatrix} =0 \] which is a contradiction. {\bf Case 2: $(w_{3k-1},w_{3k}) = (0,0)$}. This implies \( w_{3k-2}\neq 0 \). On these assumptions \[ \mathcal{D}_3=(w_{3l-2}w_{3l}-w_{3l-1}^2)(1+w_{3k-2}\hat{Z})+w_{3l}w_{3k-2} \] and \[ \gamma^{3l}(\mathcal{D}_3)=(w_{3l-2}w_{3l}-w_{3l-1}^2)w_{3k-2}. \] Now \( \mathcal{D}_3=\gamma^{3l}(\mathcal{D}_3)=0 \) implies that $(w_{3l-2}w_{3l}-w_{3l-1}^2)=0$ and $w_{3l}=0$. The first equality  gives $w_{3l-1}=0$ which alltogether contradicts the assumption that \[ \operatorname{Rank}  \begin{pmatrix} w_{3l-2} & w_{3l-1} & w_{3k-2} & w_{3k-1} \\ w_{3l-1} & w_{3l} & w_{3k-1} & w_{3k}  \end{pmatrix}  = 2. \] 
 
Finally we study the stratum of {\bf non-generic fibers}, that is $\mathbf{(a_3,a_4)=(0,0)}$. Here all fibers are smooth. Also \[\mathcal{F}_{(a_1,a_2,0,0)}^{2k+1}=\mathcal{F}_{(a_1,a_2,0,0)}^{2k}\times \mathbb{C}^3\] and since $\mathcal{F}_{(a_1,a_2,0,0)}^{2k}$ is elliptic by the induction hypothesis we are done.  
\section{Product of exponentials and open questions}\label{s:exponential}

For a Stein space $X$, a complex Lie group $G$ and its exponential map $\exp: \mathfrak{g} \to G$ we say that a holomorphic map $f:X \to G$ 
is a product of $k$ exponentials if there are holomorphic maps $f_1, \ldots, f_k:X \to \mathfrak{g}$ such that $$f=\exp(f_1)\cdots \exp(f_k).$$
It is easy to see that any map $f$ which is a product of exponentials (for some sufficiently large $k$) is null-homotopic. 
In the case where $G$ is the special linear group $\operatorname{SL}_n(\mathbb{C})$ the converse follows from~\cite{Ivarsson:2012} as explained in~\cite{DK}. In the same way we prove:

\begin{Thm} \label{exponential}  For a Stein space $X$ there is
a number $N$ depending on the dimension of
$X$ such that any null-homotopic holomorphic map  $f\colon X\to \operatorname{Sp}_{4}(\mathbb{C})$ can be factorized as 
\[f(x)= \exp (G_1 (x))\dots \exp (G_{K}(x)).\]

where $G_i : X \to \mathfrak{sp_4} (\mathbb{C})$ are holomorphic maps.

\end{Thm}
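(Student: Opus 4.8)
The plan is to reduce the exponential factorization to the already-established elementary-matrix factorization (Theorem \ref{t:mainthmrestate}), exactly as the last sentence before the statement hints ("In the same way we prove"). The key observation is that each elementary symplectic matrix lies in a one-parameter subgroup of $\operatorname{Sp}_4(\mathbb{C})$: for a symmetric $2\times 2$ matrix $U$ we have
\[
\begin{pmatrix} I_2 & 0_2 \\ U & I_2 \end{pmatrix} = \exp\begin{pmatrix} 0_2 & 0_2 \\ U & 0_2 \end{pmatrix}, \qquad \begin{pmatrix} I_2 & U \\ 0_2 & I_2 \end{pmatrix} = \exp\begin{pmatrix} 0_2 & U \\ 0_2 & 0_2 \end{pmatrix},
\]
and in both cases the exponent is a nilpotent element of $\mathfrak{sp}_4(\mathbb{C})$ depending holomorphically (in fact linearly) on the entries of $U$. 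So first I would record this identity and note that $M_k(G(x)) = \exp(\widehat{G}(x))$ where $\widehat{G}\colon X \to \mathfrak{sp}_4(\mathbb{C})$ is holomorphic whenever $G\colon X \to \mathbb{C}^3$ is.

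Next I would invoke Theorem \ref{t:mainthmrestate}: a null-homotopic holomorphic $f\colon X \to \operatorname{Sp}_4(\mathbb{C})$ factors as $f(x) = M_1(G_1(x))\cdots M_N(G_N(x))$ with $N = N(d)$ holomorphic maps $G_i\colon X \to \mathbb{C}^3$. Applying the identity above to each factor gives $f(x) = \exp(\widehat{G}_1(x))\cdots \exp(\widehat{G}_N(x))$ with $\widehat{G}_i\colon X \to \mathfrak{sp}_4(\mathbb{C})$ holomorphic, which is precisely a product of $N$ exponentials. The bound $K = N(d)$ depends only on $\dim X$, as required. This already gives the theorem in the forward direction; the "easy to see" remark about products of exponentials being null-homotopic handles the (unneeded here) converse and shows the characterization is sharp.

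One subtlety to address: Theorem \ref{t:mainthmrestate} is stated for $X$ a reduced Stein space and requires $f$ null-homotopic, which matches the hypotheses here verbatim, so no extra work is needed there. The only genuinely routine point is checking that $\begin{pmatrix} 0_2 & 0_2 \\ U & 0_2\end{pmatrix}$ and $\begin{pmatrix} 0_2 & U \\ 0_2 & 0_2\end{pmatrix}$ actually lie in $\mathfrak{sp}_4(\mathbb{C})$ — this is immediate from the Lie algebra condition ($X^T J + J X = 0$ with $J$ the form of $\omega$), which forces the off-diagonal blocks to be symmetric and the diagonal blocks to be transposes-up-to-sign; the elementary exponents satisfy this because $U$ is symmetric. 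I do not expect any serious obstacle: the entire content is the translation of a unipotent factorization into an exponential factorization via one-parameter subgroups, and all the analytic difficulty was already absorbed into Theorem \ref{t:mainthmrestate}. The only thing to be careful about is bookkeeping of which blocks are symmetric, and making sure the holomorphic dependence is preserved — both are transparent since $\exp$ restricted to the abelian (even $2$-step nilpotent) subalgebras of lower/upper triangular type is a polynomial map, indeed the identity plus the nilpotent.

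\begin{proof}[Proof of Theorem \ref{exponential}]
For a symmetric $2\times 2$ matrix $U$ over $\mathbb{C}$ the matrices
\[
\begin{pmatrix} 0_2 & 0_2 \\ U & 0_2 \end{pmatrix}, \qquad \begin{pmatrix} 0_2 & U \\ 0_2 & 0_2 \end{pmatrix}
\]
lie in $\mathfrak{sp}_4(\mathbb{C})$ (the condition defining the Lie algebra forces precisely the off-diagonal blocks to be symmetric when the diagonal blocks vanish), they are nilpotent of square zero, and hence
\[
\exp\begin{pmatrix} 0_2 & 0_2 \\ U & 0_2 \end{pmatrix} = \begin{pmatrix} I_2 & 0_2 \\ U & I_2 \end{pmatrix}, \qquad \exp\begin{pmatrix} 0_2 & U \\ 0_2 & 0_2 \end{pmatrix} = \begin{pmatrix} I_2 & U \\ 0_2 & I_2 \end{pmatrix}.
\]
Thus for every $k$ there is a linear (in particular holomorphic) map $L_k\colon \mathbb{C}^3 \to \mathfrak{sp}_4(\mathbb{C})$ with $M_k(G(x)) = \exp\big(L_k(G(x))\big)$ for any holomorphic $G\colon X\to \mathbb{C}^3$.

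Now let $f\colon X\to \operatorname{Sp}_4(\mathbb{C})$ be holomorphic and null-homotopic, and let $d = \dim X$. By Theorem \ref{t:mainthmrestate} there are $N = N(d)$ holomorphic maps $G_1,\dots,G_N\colon X\to \mathbb{C}^3$ with
\[
f(x) = M_1(G_1(x))\cdots M_N(G_N(x)).
\]
Setting $G_i := L_i\circ G_i\colon X\to \mathfrak{sp}_4(\mathbb{C})$ (holomorphic) and $K := N(d)$ we obtain
\[
f(x) = \exp(G_1(x))\cdots \exp(G_K(x)),
\]
as claimed, with $K$ depending only on $\dim X$.
\end{proof}
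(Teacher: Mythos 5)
Your proof is correct and takes essentially the same approach as the paper: apply Theorem \ref{t:mainthmrestate} to factor $f$ into elementary symplectic matrices, then observe that each such factor is the exponential of a nilpotent element of $\mathfrak{sp}_4(\mathbb{C})$. The paper phrases this via the finite logarithm series for $\ln(I+B)$ with $B=A_i-I$ nilpotent, whereas you exhibit the inverse identity $\exp(B)=I+B$ directly (using $B^2=0$) and verify the Lie-algebra membership explicitly — the same observation, slightly more detailed on your end.
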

\begin{proof}
By Theorem \ref{t:mainthmrestate} we find
$K$ elementary symplectic matrices $A_i (x)
\in \operatorname{Sp}_4 (\mathcal{O}(X))$, 
$ i = 1, 2, \ldots K$, such that 
\[f(x)=  A_1 (x)\dots A_{K}(x).\]

Now remark that the logarithmic series 
$$\ln (Id + B) = \sum \frac{1}{n} B^n $$ is finite for the nilpotent matrices $B_i = A_i -Id$.
\end{proof}

\begin{OP} Determine the 
optimal number $K$ in Theorem \ref{exponential}.
\end{OP}

\begin{OP}
 Determine the optimal numbers of factors in Theorem \ref{t:mainthmrestate}.
\end{OP}

The smooth fibers \[\mathcal{F}^K_{(a_1,a_2,a_3,a_4)} = (\pi_4 \circ \Psi_K)^{-1}(a_1,a_2,a_3,a_4). \] of the fibration projecting the product of $K$ elementary symplectic matrices to its last row are smooth affine algebraic varieties. They are new examples of Oka manifolds, since  we prove as a by-product of Proposition \ref{p:mainprop} that they are
holomorphically flexible (for definition see \cite{AFKKZ}). 
Our proof does not give the algebraic flexibility of them. Even if our
initial complete fields obtained in Section \ref{s:descComplete} are algebraic. The problem is that their flows are not always algebraic (not all of them are locally nilpotent). Therefore the pull-backs by their flows are merely holomorphic vector fields.

\begin{OP}

Which other (stronger) flexibility properties 
like algebraic flexibility, algebraic (volume) density property,
or (volume) density property do the fibers $\mathcal{F}^K_{(a_1,a_2,a_3,a_4)}$ admit?
\end{OP}

For the definition of these flexibility properties we refer to the 
overview article \cite{Kutzschebauch}.

Let us remark that the fibers of the fibration for $5$ elementary factors in \cite{Ivarsson:2012} have been thoroughly studied in \cite{KaKu}
and \cite{KaKu1} Section 7. They were the starting point for the introduction of
the class of generalized Gizatullin surfaces whose final classification
was achieved in \cite{KKL}.  The topology of these fibers for any number of elementary factors has been studied in \cite{DVi}  where
it was also proven that they admit the algebraic volume density property.
Such studies are interesting since the possible topological types of Oka manifolds or manifolds with the density property are not understood at the moment.

\begin{OP}

Determine the homology groups of the fibers $\mathcal{F}^K_{(a_1,a_2,a_3,a_4)}$.
\end{OP}
And finally: 

\begin{OP}
Prove Conjecture \ref{c:factorization}.
\end{OP}


\begin{thebibliography}{99}


\bibitem[AFKKZ]{AFKKZ} Arzhantsev, I. and Flenner, H. and Kaliman, S. and Kutzschebauch, F. and Zaidenberg, M.,
     \emph{Flexible varieties and automorphism groups.}, Duke Math. J.,
 \textbf{162},
     (2013), 767--823.
     
     \bibitem[BMS67]{BMS}
     Bass, H.; Milnor, J.; Serre, J.-P.,
\emph{Solution of the congruence subgroup problem for $SL_n(n\ge 3)$ and $Sp_{2n}(n\ge 2)$.} Inst. Hautes \'Etudes Sci. Publ. Math. No. 33 (1967), 59–-137. 
     
 \bibitem[Co66]{Cohn} Cohn, P. M.,
\emph{On the structure of the $GL_2$ of a ring.} 
Inst. Hautes \'Etudes Sci. Publ. Math. No. 30 (1966), 5–-53.
 
 \bibitem[DeVi20]{DVi}
 De Vito, G., \emph{New examples of Stein manifolds with volume density property.} Complex Anal Synerg 6, 9 (2020).    
    
     \bibitem[Dr20]{Draisma} Draisma, Jan,
\emph{Irreducible control in  algebraic groups.} preprint (2020).

\bibitem[DK19]{DK}
Doubtsov, Evgueni; Kutzschebauch, Frank, \emph{Factorization by elementary matrices, null-homotopy and products of exponentials for invertible matrices over rings.} Anal. Math. Phys. 9 (2019), no. 3, 1005–-1018.

\bibitem[For10]{Forstneric:2010}
Franc Forstneri{\v{c}}, \emph{The {O}ka principle for sections of stratified fiber bundles.}, Pure Appl. Math. Q. \textbf{6} (2010), 843--874.


\bibitem[For11]{Forstneric:2011}
Franc Forstneri{\v{c}}, \emph{Stein manifolds and holomorphic mappings.}, Springer-Verlag, 2011


\bibitem[FP01]{Forstneric:2001}
Franc Forstneri{\v{c}} and Jasna Prezelj, \emph{Extending holomorphic sections from complex subvarieties.}, Math. Z. \textbf{236} (2001), 43--68.

\bibitem[FP02]{Forstneric:2002}
Franc Forstneri{\v{c}} and Jasna Prezelj, \emph{Oka's principle for holomorphic submersions with sprays.}, Math. Ann. \textbf{322} (2002), 633--666.

\bibitem[Gro89]{Gromov:1989}
Mikhael Gromov, \emph{Oka's principle for holomorphic sections of elliptic bundles.}, J. Amer. Math. Soc. \textbf{2} (1989), 851--897.

\bibitem[GMV91]{Grunewald:1991}
Fritz Grunewald, Jens Mennicke, and Leonid Vaserstein, \emph{On symplectic groups over polynomial rings.}, Math. Z. \textbf{206} (1991), 35--56.

\bibitem[IK12]{Ivarsson:2012}
Bj{\"o}rn Ivarsson and Frank Kutzschebauch, \emph{Holomorphic factorization of
  mappings into $\mbox{SL}_n(\mathbb{C})$.}, Ann. of Math. (2)
\textbf{175}
  (2012), 45--69.
  
\bibitem[IKL19]{Ivarsson:2019}
Bj{\"o}rn Ivarsson, Frank Kutzschebauch and Erik L{\o}w, \emph{Factorization of symplectic matrices into elementary factors.}  
Proc. Amer. Math. Soc. 148 (2020), no. 5, 1963–-1970.
\bibitem[KR88]{Klein:1988}
Manfred Klein and Karl Josef Ramspott, \emph{Ein Transformationssatz f{\"u}r
Idealbasen holomorpher Funktionen.},
Bayer. Akad. Wiss. Math.-Natur. Kl. Sitzungsber. \textbf{1987} (1988), 93--100.

\bibitem[KaKu11]{KaKu}
Kaliman, Shulim; Kutzschebauch, Frank \emph{On the present state of the Andersén-Lempert theory.} Affine algebraic geometry, 85–-122, CRM Proc. Lecture Notes, 54, Amer. Math. Soc., Providence, RI, 2011. 

\bibitem[KaKu16]{KaKu1}
Kaliman, Shulim; Kutzschebauch, Frank  \emph{On algebraic volume density property.} Transform. Groups 21 (2016), no. 2, 451–-478.

\bibitem[KKL20]{KKL}
Kaliman, Shulim; Kutzschebauch, Frank; Leuenberger, Matthias; \emph{Complete algebraic vector fields on affine surfaces.} Internat. J. Math. 31 (2020), no. 3, 2050018, 50 pp.
 
 \bibitem[Ku14]{Kutzschebauch}
 Kutzschebauch, Frank \emph{Flexibility properties in complex analysis and affine algebraic geometry.} Automorphisms in birational and affine geometry, 387–-405, Springer Proc. Math. Stat., 79, Springer, Cham, 2014. 
\bibitem[Sus77]{Suslin}
A.\ A.\ Suslin, \emph{The structure of the special linear group over rings of polynomials.} {\it Izv.\ Akad.\ Nauk SSSR Ser.\ Mat.\ }{\textbf 41} (1977), no.\ 2, 235--252, 477, (English translation, {\it Math.\ USSR Izv.\ }{\textbf 11} (1977), 221--238.) 
 \bibitem[Kop78]{Kopeiko} Kopeĭko, V. I., \emph{Stabilization of symplectic groups over a ring of polynomials.} (Russian) Mat. Sb. (N.S.) 106(148) (1978), no. 1, 94–-107, 144.
 
 \bibitem[Vas88]{Vaserstein}
L.\ Vaserstein, \emph{ Reduction of a matrix depending on parameters to a diagonal form by addition operations.}  Proc.\ Amer.\ Math.\ Soc.\ {\bf 103} (1988), no.\ 3, 741--746.

\bibitem[VS13]{VS}
Vavilov, N. A.; Stepanov, A. V., \emph{Linear groups over general rings. I. Generalities.} (Russian) Zap. Nauchn. Sem. S.-Peterburg. Otdel. Mat. Inst. Steklov. (POMI) 394 (2011), Voprosy Teorii Predstavleniĭ Algebr i Grupp. 22, 33–-139, 295; (English translation in J. Math. Sci. (N.Y.) 188 (2013), no. 5, 490–-550). 


\end{thebibliography}
\end{document}